\allowdisplaybreaks \numberwithin{equation}{section}
\theoremstyle{plain}
\newtheorem{theorem}{Theorem}[section]
\newtheorem{proposition}[theorem]{Proposition}
\newtheorem{lemma}[theorem]{Lemma}
\theoremstyle{definition}
\newtheorem{definition}[theorem]{Definition}
\newtheorem{remark}[theorem]{Remark}
\def \div {\mathop {\rm div}\nolimits}
\def \dist {\mathop {\rm dist}\nolimits}
\def \det {\mathop {\rm det}\nolimits}
\def \de {\mathrm{d}}
\def \e {\epsilon}
\def \re {\mathbb R}
\def \C {\chi}
\def \L {\Lambda}
\def \P {P}
\title[Energy-dissipation balance of a smooth moving crack]{Energy-dissipation balance of a smooth moving crack}
\author[M. Caponi]{Maicol Caponi}
\address[Maicol Caponi]{SISSA, via Bonomea 265, 34136 Trieste, Italy}
\email{mcaponi@sissa.it}
\author[I. Lucardesi]{Ilaria Lucardesi}
\address[Ilaria Lucardesi]{Universit\'e de Lorraine, CNRS, IECL, F-54000 Nancy, France}
\email{ilaria.lucardesi@univ-lorraine.fr}
\author[E. Tasso]{Emanuele Tasso}
\address[Emanuele Tasso]{SISSA, via Bonomea 265, 34136 Trieste, Italy}
\email{etasso@sissa.it}
\begin{document}
\maketitle

\begin{abstract} In this paper we provide necessary and sufficient conditions in order to guarantee the energy-dissipation balance of a Mode III crack, growing on a prescribed smooth path. 
Moreover, we characterize the singularity of the displacement near the crack tip, generalizing the result in \cite{NS} valid for straight fractures.
\end{abstract}

{\small

\medskip
\noindent {\textbf{Keywords:} fracture dynamics, wave equation in time-dependent domains, energy-dissipation balance}

\medskip
\noindent{\textbf{MSC 2010:} 35L05, 35Q74, 74H35, 74R10 
}}
\bigskip

\section{Introduction}

In this paper we compute the kinetic+elastic energy associated to a particular dynamic crack evolution, in which the fracture lips open vertically ({\it anti-plane} case) and the crack set is smooth and preassigned.

We consider as reference configuration a bounded open set $\Omega$ of $\mathbb R^2$ with Lipschitz boundary. We fix a time interval $[0,T]$, a vertical volume force $f$, and we prescribe a boundary deformation on a portion of $\partial \Omega$. We assume that, in response to the external loads, the material breaks along a fixed {$C^{3,1}$} curve $\Gamma\subset \Omega$ with end-points on $\partial \Omega$. In this case, the crack set $\Gamma(t)$ at time $t$ is identified by the crack-tip position on $\Gamma$, described by an arc-length parameter $s(t)$. Here we assume $t\mapsto s(t)$ non decreasing ({\it irreversibility} assumption) and of class {$C^{3,1}([0,T])$}.
Far from the crack set, the material undergoes an elastic deformation: the (vertical) displacement $u$ satisfies a wave equation of the form
\begin{equation}\label{intro-eq1}
\ddot{u}(t)- \div(A \nabla u(t)) = f(t) \quad \hbox{in }\Omega \setminus \Gamma(t)\,,
\end{equation}
where $A$ is a suitable tensor field (satisfying the usual ellipticity conditions); the equation is supplemented by boundary conditions, that we choose Neumann homogeneous on $\Gamma(t)$ ({\it traction free} case), and initial conditions.

The well-posedness of \eqref{intro-eq1} has been widely investigated. We limit ourselves to cite the papers \cite{DM-Lar} and \cite{NS}: in the former, the authors work under the sole assumption of finite measure of the crack set, provide a notion of solution, and show its existence, using a variational time-discretization approach; in the latter, the authors work under stronger regularity assumptions and, following a change of variables approach, prove existence of solutions in a suitable weak sense. Later, in \cite{DM-Luc}, the regular case has been resumed: following the same approach of \cite{NS}, the authors obtain uniqueness of solutions and their continuous dependence on the data. These results have been extended to the vector case in \cite{C}.

In this paper we move the natural step forward in the study: the computation of the kinetic+elastic energy and its relation with the crack growth. This computation has a crucial role, in view of the so-called {\it energy-dissipation balance} which underlies the dynamics (see, e.g., \cite{G,F}): the kinetic + elastic energy released during the elastodynamics and the energy dissipated to create the fracture (the latter proportional to the crack surface increment) balance the work done by the external loads. In formulas, denoting by $\mathcal E(t)$ the energy
\begin{equation}\label{def-e}
\mathcal E(t):= \frac12 \int_{\Omega \setminus \Gamma(t)} \left[ |\dot u(t)|^2 + |\nabla u (t)|^2\right]\, \de t \,,
\end{equation}
and fixing homogeneous Dirichlet-Neumann boundary conditions on $\partial \Omega$, the energy-dissipation balance states that, for every time $t\in [0,T]$,
 \begin{equation}\label{edb}
 \mathcal E(t) - \mathcal E(0) +  \mathcal{H}^1(\Gamma(t) \setminus \Gamma(0)) = \int_0^t \langle f(\tau), \dot{u}(\tau) \rangle_{L^2(\Omega)} \, \de \tau\,.
\end{equation}

The difficulty of computing \eqref{def-e} is twofold: on one hand, the displacement has a singular behavior near the tip; moreover, the domain of integration appearing in \eqref{def-e} is irregular and varies in time. To handle the first issue, a representation result for $u$ is in order: we prove that, for every time $t$, the displacement is of class $H^1$ in a neighborhood of the tip and of class $H^2$ far from it, namely $u$ is of the form
\begin{equation}\label{decomposition}
u(t)=u^R(t) + \zeta(t) k(t) S(\Phi(t))\,,
\end{equation}
where $u^R(t) \in H^2(\Omega \setminus \Gamma(t))$, $\zeta(t)$ is a cut--off function supported in a neighborhood of the moving tip of $\Gamma(t)$, $k(t)\in \mathbb R$, $S\in H^1(\mathbb R^2 \setminus \{x_1\geq 0\})$, and $\Phi(t)$ is a diffeomorphism of $\Omega$ (constructed in a suitable way, according to the properties of $\Gamma$, $A$, and $s$). Once fixed $\zeta$, $S$, and $\Phi$, the function $u^R$ and the constant $k$ are uniquely determined. Actually, the coefficient $k$ only depends on $A$, $\Gamma$, and $s$ (see Theorem \ref{valuek} and Remark \ref{gencase}). In addition, we provide another decomposition for $u$ which is more explicit and better explains the behavior of the singular part (see \S \ref{maicol}). The second issue is technical and we overcome it exploiting Geometric Measure Theory techniques (see Section \ref{sec-err}). The computation leads to the following formula:
\begin{equation}\label{energia}
\mathcal E(t) - \mathcal E(0) + \frac\pi4 \int_0^t  k^2(\tau) {a}(\tau)\dot s(\tau)\, \de \tau  = 
\int_0^t \langle f(\tau), \dot{u}(\tau) \rangle_{L^2(\Omega)} \, \de \tau\,,
\end{equation}
where ${a}$ is a positive function which depends on $A$, $\Gamma$, and $s$, and is equal to 1 when $A$ is the identity matrix; see Theorem~\ref{thmeb} for the proof of \eqref{energia} when $A=I$, and Remark~\ref{genenba} for the general case. By comparing \eqref{edb} and \eqref{energia}, we deduce the following necessary and sufficient condition on the crack evolution (in the class of smooth cracks), in order to guarantee the energy-dissipation balance: during the crack opening, namely when $\dot{s}(t)>0$, the function $k(t)$, often called {\it stress intensity factor}, has to be equal to $2/\sqrt{\pi {a}(t)}$.

We mention that a computation for a horizontal crack $\Gamma(t)=\Omega \cap \{y=0\,,\ x\leq c t \}$ moving with constant velocity $c$ (+ a suitable boundary datum) can be found in \cite[\S 4]{DMLT}.

The representation result stated in \eqref{decomposition} extends that of \cite{NS} for straight cracks (near the tip) and $A$ the identity matrix. Here we adapt their proof to the case of a curved crack and a constant (in time) operator $A$, possibly non homogeneous; moreover, we remove one restrictive assumption on the acceleration $\ddot s$ (see Remark \ref{differenze}). The main steps in the proof of \eqref{decomposition} are the following: performing four changes of variables, we reduce problem \eqref{intro-eq1} to a second order PDE of the form
\begin{equation}\label{intro-eq2}
\ddot{v}(t) - \div (\widetilde{A} (t) \nabla v(t)) + l.o.t. = \widetilde{f}(t) \quad \hbox{in }\widetilde\Omega \setminus \widetilde{\Gamma}_0\,,
\end{equation}
with $\widetilde \Omega$ Lipschitz planar domain and $\widetilde{\Gamma}_0$ a {$C^{3,1}$} curve straight near its tip. The tensor field $\widetilde{A}$ has time-dependent coefficients but at the tip of $\widetilde\Gamma_0$ it is constantly equal to the identity. Finally, the decomposition result for $v$, solution to \eqref{intro-eq2}, obtained via semi-group theory, leads to the one for $u$, solution to the original problem \eqref{intro-eq1}.  

\medskip

The plan of the paper is the following.
In the next section we fix the notations, the standing assumptions on the crack set and on the operator $A$; moreover, we introduce the changes of variables which transform \eqref{intro-eq1} into \eqref{intro-eq2}.
Then, in Section \ref{sec-dec} we adapt the proof of the decomposition result \cite[Theorem 4.8]{NS} to our more general case, underlying the main differences. Finally, in Section \ref{sec-err}, we prove the energy balance \eqref{energia}.

\section{Preliminaries}\label{sec-prel}

\subsection{Notation}
We adopt standard notations for Lebesgue and Sobolev spaces on bounded open sets of $\re^2$. The boundary values of a Sobolev function are always intended in the sense of traces, and the one dimensional Hausdorff measure is denoted by $\mathcal H^{1}$. Given an open set $\Omega$ with Lipschitz boundary, we denote by $n$ the outer unit normal vector to $\partial\Omega$, defined a.e.\ on the boundary. Moreover, given a non negative summable function $w$ in $\Omega$, we denote by $L^p(\Omega,w\de x)$ the weighted $L^p$-space on $\Omega$ with respect to the measure $w\, \de x$.

Given a normed vector space $X$ and its topological dual $X^*$, the norm in $X$ is denoted by $\|\cdot\|_X$ and the duality product between $X^*$ and $X$ is denoted by $\langle \cdot, \cdot \rangle_{X}$. We adopt the same notations also for vector valued functions in $X$.
When no ambiguity may arise, we write $\|\cdot \|_\infty$ for the $L^\infty$-norm of scalar and vector functions, computed in their domain of definition.
Given an interval $I\subset \re$ and a Banach space $X$, $L^p(I; X)$ is the space of $L^p$ functions from $I$ to $X$. Given $u\in L^p(I;X)$, we denote by $\dot{u}\in \mathcal D'(I;X)$ its distributional derivative. 

We write $SO(2)^+$ to represent the space of $2 \times 2$ orthonormal matrices whose determinant is equal to $1$.

\subsection{Standing assumptions}
We consider a bounded open set $\Omega \subset \mathbb R^2$ with Lipschitz boundary $\partial \Omega$, we take a Borel subset $\partial_D\Omega$ of $\partial \Omega$ (possibly empty), and we denote by $\partial_N \Omega$ its complement. We fix a {$C^{3,1}$} curve $\gamma: [0,\ell] \to \overline\Omega$ parametrized by arc-length, with end-points on $\partial \Omega$; namely, denoting by $\Gamma$ the support of $\gamma$, we assume $\Gamma\cap \partial \Omega = \gamma(0) \cup \gamma(\ell)$. 
Let $T>0$ and $s:[0,T]\to (0,\ell)$ be a non decreasing function of class {$C^{3,1}$}. We set
\[
\Gamma(t):=\{\gamma(\sigma) \ :\ 0 \leq \sigma \leq s(t) \}\,.
\]
\begin{figure}[h]                                             
\begin{center}                                                
{\includegraphics[height=3.5truecm] {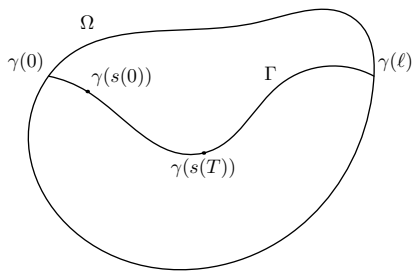}}                                                
\end{center}                                                  
\caption{{\it The endpoints of $\Gamma$ are $\gamma(0)$ and $\gamma(\ell)$ and belong to $\partial \Omega$. We study the evolution of the fracture along $\Gamma$ from $\gamma(s(0))$ to $\gamma(s(T))$.}}\label{fig-taglio} 
\end{figure}                                                  

Given a tensor field $A:\overline{\Omega} \to \mathbb R^{2\times 2}_{sym}$ with {smooth ($C^{2,1}$ would be enough)} coefficients satisfying the ellipticity condition
\begin{equation}\label{ell-A}
(A(x) \xi) \cdot  \xi \geq c_0|\xi|^2 \quad \forall \, x\in \overline \Omega\,, \ \xi \in \mathbb R^2\,,
\end{equation}
a function $f\in C^0([0,T];H^1(\Omega))\cap {\mathrm{Lip}([0,T];L^2(\Omega))}$, and suitable initial data $u^0$ and $u^1$ (for the precise regularity, see Theorem \ref{thm-decomp}), 
we consider the differential equation
\begin{equation}\label{eq-1}
\ddot{u}(t) - \div (A \nabla u(t)) = f(t) \quad \hbox{in }\Omega \setminus \Gamma(t)\,,
\end{equation}
with initial conditions 
$$
u(0)=u^0\,,\ \dot{u}(0)= u^1\,,
$$
and boundary conditions
\begin{equation}\label{bc-u}
u(t)=0 \quad \hbox{on } \partial_D\Omega \,,\quad (A \nabla u(t))\cdot n = 0 \quad \hbox{on }\partial_N\Omega \cup \Gamma(t)\,,
\end{equation}
where $n$ denotes the unit normal vector.
The equation \eqref{eq-1} has to be intended in the weak sense, namely valid for every $t\in [0,T]$ in duality with an arbitrary test function in $H^1(\Omega \setminus \Gamma(t))$ with zero trace on $\partial_D\Omega$ (see also \cite[Definition 2.4]{DM-Luc}).

Furthermore, we assume that the velocity of $s$ is bounded by the constant $c_0$ as follows:
\begin{equation}\label{ipotesi-s}
|\dot{s}(t)|^2 \leq c_0- \delta\quad \forall t\in [0,T]\,,
\end{equation}
for some constant $0< \delta\leq c_0 $. 

The importance of this bound is twofold: on one hand, the relation with the ellipticity constant $c_0$ of $A$ will guarantee the resolvability of the problem (see also \eqref{claim1} in Lemma \ref{lem-A4}); on the other hand, the estimate will allow us to work locally in time, and then, repeating the procedure a finite number of times, to obtain a global result in $[0,T]$.

\subsection{The change of variables approach}\label{ssec-cov}
We fix $t_0,t_1\in[0,T]$ such that $0<t_1-t_0<\rho$, with $\rho$ sufficiently small. A comment on the value of $\rho$ is postponed to Remark \ref{remrho}. In the following, we perform 4 changes of variables: first we act on the operator $A$, transforming it into the identity on the fracture set; then we straighten the crack in a neighborhood of $\gamma(s(t_0))$; then we recall the time-dependent change of variables introduced in \cite{DM-Luc}, that brings $\Gamma(t)$ into $\Gamma(t_0)$ for every $t\in[t_0,t_1]$; finally, we perform a last change of variables in a neighborhood of the (fixed) crack tip, in order to make the principal part of the transformed equation equal to the minus Laplacian. For the sake of clarity, at each step, we use the superscript $i$, $i=1,\ldots, 4$, to denote the new objects: the domain $\Omega^{(i)}$, the fracture set $\Gamma^{(i)}$, and the time-dependent crack $\Gamma^{(i)}(t)$. We will also introduce the tensor fields $A^{(i)}$, which characterize the leading part (with respect to the spatial variables)  $-\div (A^{(i)} \nabla v)$ of the PDE \eqref{eq-1} transformed.

\medskip

\noindent {\it Step 1.}  
Thanks to the standing assumptions on $A$, we may find a tensor field $Q$ of class $C^{2,1}(\overline{\Omega};\mathbb R^{2\times 2})$ such that, for every $x\in \Omega$,
\begin{equation}\label{QAQ}
Q(x) A(x) Q^T(x) = I\,,
\end{equation}
being $I$ the identity matrix. In particular we can choose $Q(x)$ to be equal to the square root matrix of $A^{-1}(x)$, namely $Q(x)=Q^T(x)$ and $Q^2(x)=A^{-1}(x)$. It is easy to prove the existence of a {smooth} diffeomorphism $\C$ {(again, $C^{3,1}$ would be enough)} of $\overline \Omega$ which is the identity in a neighborhood of $\partial \Omega$ and satisfies, at least in a neighborhood $V$ of $\gamma(s(t_0))$, $D\C (x)=Q(x)$ on $\Gamma$. Notice that the constraint $D\C = Q$ cannot be satisfied in the whole domain, since the lines of $Q$ in general are not curl free. 
We set
\begin{align*}
&\Omega^{(1)}:=\Omega\,,\ 
\Gamma^{(1)}:=\C (\Gamma)\,, \  \Gamma^{(1)}(t):=\C (\Gamma(t))\,,
\\ 
& A^{(1)}(x):= [D\C \, A \, D\C^T] (\C^{-1}(x))\,.
\end{align*}
Clearly, the tensor $A^{(1)}$ satisfies an ellipticity condition of type \eqref{ell-A} for a suitable constant $C_1>0$ and it equals the identity matrix on $\Gamma^{(1)}$. 
Moreover, we may easily write an arc-length parametrization $\gamma^{(1)}$ of $\Gamma^{(1)}$ exploiting that of $\Gamma$, by setting 
$$
\gamma^{(1)}:=\C \circ \gamma \circ \beta\,,\quad \hbox{with}\quad 
\beta^{-1}(\sigma) := \int_0^{\sigma} \| (\C \circ \gamma ) \prime\|\, \de\tau\,.
$$
Accordingly, the time-dependent fracture $\Gamma^{(1)}(t)$ is parametrized by 
$$
\Gamma^{(1)}(t) = \gamma^{(1)}(s^{(1)}(t))\,,\quad \hbox{with}\quad s^{(1)}:= \beta^{-1}\circ s\,.
$$
Note that the function $s^{(1)}$ is of class {$C^{3,1}$} and, thanks to \eqref{QAQ} and \eqref{ipotesi-s}, satisfies the following bound:
\begin{equation}\label{bound-s1}
|\dot{s}^{(1)}(t)| ^2= \left|\frac{\de \beta^{-1}}{\de s} (s(t)) \right|^2|\dot{s}(t)|^2
\leq \max_{\|\xi\|=1\,,\,x\in \Gamma\cap V}\| D\C (x) \xi\|^2 |\dot{s}(t)|^2\leq (1-c_1^2)\,,
\end{equation}
where, for brevity, we have set $c_1^2:=\delta/c_0$.
Moreover, for the sake of clarity, we also fix a notation for the maximal acceleration: we set $c_2$ as
\begin{equation}\label{def-c2}
c_2:=\max_{t\in[t_0,t_1]} |\ddot{s}^{(1)}(t)|\,.
\end{equation}
A direct computation proves that $c_2$ is bounded and depends on $c_0$, $\delta$, $\ddot{s}$, $\gamma^{\prime \prime}$, and $D^2\C$.

\medskip

\noindent {\it Step 2.} Now we provide a change of variables $\L$ of class $C^{2,1}$ 
which straightens the crack in a neighborhood of $\gamma^{(1)}(s^{(1)}(t_0))$. First, up to further compose $\Lambda$ with a rigid motion, we may assume that the crack-tip of $\Gamma^{(1)}(t_0)$ is at the origin, and the tangent vector to $\Gamma^{(1)}$ at the origin is horizontal, namely
$$
\gamma^{(1)}(s^{(1)}(t_0)) = 0\,,\quad (\gamma^{(1)})^\prime(s^{(1)}(t_0)) = e_1=(1,0)\,.
$$
For brevity, we set $\sigma_0:=s^{(1)}(t_0)$. We begin by transforming a tubular neighborhood $U$ of the fracture near 0 into a square: exploiting the representation
$$
U = \{ \gamma^{(1)}(\sigma_0+\sigma) + \tau n^{(1)}(\sigma_0+ \sigma)\ :\  \sigma \in (- \varepsilon, \varepsilon)\,,    \tau \in (-\varepsilon, \varepsilon)   \} 
$$
with $n^{(1)} := (\gamma^{(1)})'^\perp$, we define $\L\colon U\to  (-\varepsilon,\varepsilon)^2$ as the inverse of the function $(\sigma,\tau)\mapsto \gamma^{(1)}(\sigma+\sigma_0) + \tau n^{(1)}(\sigma+\sigma_0)$. The global diffeomorphism is obtained by extending $\L$ to the whole $\Omega$. Accordingly, we set 
\begin{align*}
& \Omega^{(2)}:=\Lambda(\Omega^{(1)})\,,\  \Gamma^{(2)}:=\L (\Gamma^{(1)})\,, \ \Gamma^{(2)}(t):=\L (\Gamma^{(1)}(t))\,,
\\
& A^{(2)}(x):= [D\L \, A^{(1)}\, D\L^T] (\L^{-1}(x))\,.
\end{align*}
The tensor field $A^{(2)}$ still satisfies an ellipticity condition of type \eqref{ell-A}, for a suitable constant.

For $x\in\Gamma^{(2)}$ in a neighborhood of the origin, setting  $y:= \L^{-1}(x)\in \Gamma^{(1)}$, we have
$$
 A^{(2)}(x) = D\L (y) \, A^{(1)}(y) \, D\L^T(y) =  D\L (y)  \, D\L^T(y) = [ (D (\L^{-1}))^T(x)\, D(\L ^{-1})(x)  ]^{-1} = I\,.
$$
The last equality follows from
\begin{equation}\label{Lambda}
\frac{\partial (\L^{-1})}{\partial \sigma} (\sigma,\tau) =   (\gamma^{(1)})' (\sigma_0+\sigma)+ \tau (n^{(1)})^\prime (\sigma_0+\sigma)\,,\quad \frac{\partial (\L^{-1})}{\partial \tau}  (\sigma,\tau) = n^{(1)}(\sigma_0+\sigma)\,,
\end{equation}
and the fact that here we consider $x$ of the form $x=(\sigma,0)$.
In particular, we may be more precise on the ellipticity constant of $A^{(2)}$ restricted to a neighborhood of  the origin: for every $0<\e < 1$, there exists $r>0$ such that
\begin{equation}\label{ell-A2}
(A^{(2)}(x) \xi) \cdot \xi \geq (1-\e ) |\xi|^2 \quad \forall \xi \in \mathbb R^2\,,\ \forall |x|\leq r\,.
\end{equation}

Finally, we underline that if $\rho\coloneqq t_1-t_0$ is small enough (see also Remark \ref{remrho}), the whole set $\Gamma^{(1)}(t_1)\setminus \Gamma^{(1)}(t_0)$ is contained in $U$, so that the time dependent fracture $\Gamma^{(2)}(t)$ satisfies
$$
\Gamma^{(2)}(t)= \Gamma^{(2)}(t_0) \cup \{ (\sigma,0)\ :\ 0 \leq \sigma \leq s^{(1)}(t)-s^{(1)}(t_0)\}\,,
$$
for every $t\in [t_0,t_1]$.

\medskip

\noindent {\it Step 3.} Here we introduce a family of 1-parameter $C^2$ diffeomorphisms $\Psi(t,\cdot)$, $t\in [t_0,t_1]$, which transform every $\Omega^{(2)}\setminus \Gamma^{(2)}(t)$ into $\Omega^{(2)}\setminus \Gamma^{(2)}(t_0)$. All in all, we map the non cylindrical domain $\{(x,t)\ :\ x \in \Omega^{(2)}\setminus \Gamma^{(2)}(t)\,,\ t\in [t_0,t_1]\}$ into the cylindrical one $(\Omega^{(2)}\setminus \Gamma^{(2)}(t_0)) \times {[t_0,t_1]}$. This construction can be found in \cite{NS} and \cite[Example 1.14]{DM-Luc}, thus we limit ourselves to recall the main properties: the diffeomorphism $\Psi:[t_0,t_1]\times \overline{\Omega}\to \overline{\Omega}$ satisfies
\begin{align*}
\Psi(t_0)=id\,,\quad \Psi(t)_{\lfloor_{\partial \Omega}}= id\,,\quad \Psi(t)(\Gamma^{(2)}(t)) = \Gamma^{(2)}(t_0)\,,
\end{align*}
being $id$ the identity map. The corresponding tensor field is
\[
A^{(3)}(t,x):= [ D\Psi \, A^{(2)} D\Psi^T -  \dot{\Psi} \otimes  \dot{\Psi}](\Psi^{-1}(t,x))\,.
\]
Note that $A^{(2)}$ does not depend on time, while $A^{(3)}$ does. 

In a neighborhood of the origin, 
\begin{equation}\label{intorno}
\Psi(t,x) = x - (s^{(1)}(t)-s^{(1)}(t_0)) e_1\quad \hbox{and}\quad  \Psi^{-1}(t,x) = x + (s^{(1)}(t)-s^{(1)}(t_0)) e_1\,,
\end{equation}
so that $D\Psi = I$,  $\dot{\Psi}= - \dot{s}^{(1)}e_1$, and for $x=(x_1,0)$ with $x_1$ small enough in 
modulus,
$$
A^{(3)}(t, x) = \left(  \begin{array}{ccc}  1- |\dot{s}^{(1)}(t)|^2\ & 0 \\ 0 \ &  1\end{array}  \right)\,.
$$

\medskip

\noindent {\it Step 4.} In this last step we apply a change of variables $\P$ near the origin (namely the crack-tip of $\Gamma^{(2)}(t_0)$), in order to make the tensor field $A^{(4)}$, constructed as in the previous steps, satisfy $A^{(4)}(t,0)=I$ for every $t\in [t_0,t_1]$. To this aim, we recall the construction introduced in \cite[\S 4]{NS}.

We define $\alpha:[t_0,t_1]\to \re^+$ and  $d:[t_0,t_1]\times \Omega\to \Omega$ as
\begin{align*}
\alpha(t) & := \sqrt{1-|\dot{s}^{(1)}(t)|^2}\,,
\\
d(t,x) & := \alpha(t) k_{\eta}(|x|) + (1-k_\eta(|x|)){c_1}\,,
\end{align*}
where $k_{\eta}$ is the following cut--off function:
\begin{equation}\label{def-k}
k_{\eta}(\tau):= \left\{
\begin{array}{lll}
1\quad & \hbox{if } 0 \leq \tau <\eta/2
\\
\Big(2\frac{\tau}{\eta} - 2 \Big)^2\Big(4\frac{\tau}{\eta} - 1\Big)\quad & \hbox{if } \eta/2 \leq \tau< \eta
\\
0 \quad & \hbox{if } \tau\geq \eta\,.
\end{array}
\right.
\end{equation}
Here $\eta$ is a positive parameter, whose precise value will be specified later, small enough such that $B_\eta(0)\subset \Omega$. Eventually, we set
$$
\P(t,x):= \left( \frac{x_1}{d(t,x)}, x_2\right)\,.
$$
For every $t\in [t_0,t_1]$, $\P$ defines a diffeomorphism of $\Omega$ into its dilation in the horizontal direction
$$
\Omega^{(4)}:= \left\{\left(\frac{x_1}{c_1},x_2\right)\ : \ x\in \Omega \right\}\,,
$$
which maps $0$ in $0$ and $\Gamma^{(3)}(t_0):=\Gamma^{(2)}(t_0)$ into a fixed set $\Gamma^{(4)}(t_0)$, horizontal near the origin. 
Accordingly, the tensor field $A^{(4)}$ associated to this transformation reads
\[
A^{(4)}(t,x) = \big[D\P\, A^{(3)} \, D\P^T - \dot\P\otimes \dot\P-  D\P \,\dot\Psi (\Psi^{-1}) \otimes \dot\P   - \dot \P \otimes D\P\, \dot\Psi (\Psi^{-1}) \big](\P^{-1}(t,x))\,.
\]

The properties of $A^{(4)}$ are gathered in the following

\begin{lemma}\label{lem-A4} There exists a constant $c_4>0$ such that, for every $ t\in [t_0,t_1]$ and $x\in \Omega^{(4)}$,
\begin{equation}\label{claim1}
(A^{(4)}(t,x) \xi)\cdot \xi \geq c_4|\xi|^2\,, \quad \forall \xi\in \mathbb R^2\,.
\end{equation}
Moreover, for every $t\in [t_0,t_1]$, there holds
\begin{equation}\label{claim2}
A^{(4)}(t,0) = I\,.
\end{equation}
Finally, there exists a vector field $W:\partial_N \Omega^{(4)} \cup \Gamma^{(4)}(t_0)\to \mathbb R^2$ such that, for every $ t\in[t_0,t_1]$ and $x\in \partial_N \Omega^{(4)} \cup \Gamma^{(4)}(t_0)$, 
\begin{equation}\label{normal}
(A^{(4)})^T(t,x) n(x) = W(x)\,,
\end{equation}
and $W(x)=n(x)=e_2$ in a neighborhood of the tip of $\Gamma^{(4)}(t_0)$.
\end{lemma}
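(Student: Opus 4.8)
The lemma bundles three assertions about the last tensor $A^{(4)}$: the uniform ellipticity \eqref{claim1}, the normalization $A^{(4)}(t,0)=I$, and the time-independence \eqref{normal} of the conormal on the fixed outer boundary and crack. I would prove them in this order, using throughout two facts inherited from the previous steps: that $A^{(3)}$ is uniformly elliptic on $[t_0,t_1]\times\Omega^{(3)}$ with some constant $c_3>0$ (this holds, for $\rho$ and the tubular neighbourhood $U$ of Step~2 small enough, by the ellipticity of $A^{(2)}$ and the subsonic bound \eqref{bound-s1}, exactly as in \cite{NS,DM-Luc}), and that, as in those references, the diffeomorphism $\Psi$ of Step~3 is generated by a vector field supported near $\Gamma^{(2)}(t_0)$, tangent to it, equal to $-e_1$ near the tip and, more generally, horizontal and depending only on $x_1$ on the straight portion of $\Gamma^{(2)}(t_0)$ --- this last normalization being precisely what gives the transformed Neumann condition a time-independent conormal. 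With this in hand, \eqref{claim2} is immediate: at $x=0$ one has $k_\eta(0)=1$, $k_\eta'(0)=0$, so $d(t,0)=\alpha(t)$, $D\P(t,0)=\mathrm{diag}(\alpha(t)^{-1},1)$ and $\dot\P(t,0)=0$ (the first component of $\dot\P$ carries a factor $x_1$); since also $\P(t,0)=0$, every rank-one correction in the definition of $A^{(4)}$ vanishes at the tip, while $A^{(3)}(t,0)=I-|\dot s^{(1)}(t)|^2\,e_1\otimes e_1=\mathrm{diag}(\alpha(t)^2,1)$ because, by \eqref{intorno}, $D\Psi=I$ near $0$ and $\Psi^{-1}(t,0)$ lies on the straight part of $\Gamma^{(2)}(t_0)$ where $A^{(2)}=I$; multiplying, $A^{(4)}(t,0)=\mathrm{diag}(\alpha^{-1},1)\,\mathrm{diag}(\alpha^{2},1)\,\mathrm{diag}(\alpha^{-1},1)=I$.

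For \eqref{claim1}, the heart of the lemma, I would split $\Omega^{(4)}$. Outside $B_\eta(0)$ the cut-off $k_\eta$ vanishes, $d\equiv c_1$, so $\P(t,\cdot)$ is the fixed dilation $(x_1,x_2)\mapsto(x_1/c_1,x_2)$, $\dot\P\equiv0$, and $A^{(4)}=D\P\,A^{(3)}\,D\P^T$ is elliptic with constant $c_3$ (recall $c_1\le1$). Inside $B_\eta(0)$ I would compute $D\P$ and $\dot\P$ directly; the decisive point is the sign in
\[
\partial_{x_1}\P_1(t,x)=\frac{1}{d(t,x)}-\frac{(\alpha(t)-c_1)\,k_\eta'(|x|)}{d(t,x)^2}\,\frac{x_1^2}{|x|}\ \ge\ \frac{1}{d(t,x)}\ \ge\ 1,
\]
which holds because $\alpha(t)\ge c_1$, because $k_\eta'\le0$ (this is exactly what the cubic form of $k_\eta$ arranges: a decreasing $C^1$ cut-off), and because $d\le\max\{c_1,\alpha\}\le1$. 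Since $D\P$ is triangular with $\partial_{x_2}\P_2=1$, this gives $\det D\P\ge1$; meanwhile $|k_\eta'|\le C/\eta$ and $|x|\le\eta$ bound $\|D\P\|\le C(c_1)$ with $C(c_1)$ independent of $\eta$. Hence the least singular value of $D\P$ is $\ge1/C(c_1)$ and $D\P\,A^{(3)}\,D\P^T$ is elliptic with constant $c_3/C(c_1)^2$, uniformly in $\eta$. Finally the sign-indefinite terms $-\dot\P\otimes\dot\P-D\P\dot\Psi\otimes\dot\P-\dot\P\otimes D\P\dot\Psi$ all carry a factor $\dot\P$, and from $\dot d=\dot\alpha\,k_\eta$ and $|\dot\alpha|\le c_2/c_1$ (with $c_2$ as in \eqref{def-c2}) one gets $|\dot\P|\le\eta\,c_2/c_1^{3}$ on $B_\eta(0)$, while $\dot\Psi$ and $D\P$ are bounded; so these terms have operator norm $O(\eta)$. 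Choosing $\eta$ small enough (this is where the ``precise value of $\eta$'' is fixed) yields \eqref{claim1}, e.g.\ with $c_4:=c_3/(2\,C(c_1)^2)$.

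For \eqref{normal}, away from $B_\eta(0)$ the map $\P$ is the fixed dilation, so $A^{(4)}=D\P\,A^{(3)}\,D\P^T$ with $D\P$ constant in $t$, and $D\P^T n$ is a $t$-independent multiple of the normal to $\partial_N\Omega^{(3)}\cup\Gamma^{(3)}(t_0)$; it therefore suffices that $(A^{(3)})^T$ send that normal to a $t$-independent vector. This holds: near $\partial\Omega$ one has $\Psi=\mathrm{id}$, hence $A^{(3)}=A^{(2)}$; along $\Gamma^{(2)}(t_0)$ either $\Psi=\mathrm{id}$ (again $A^{(3)}=A^{(2)}$) or we are on the straight part, where the normalization of $\Psi$ and $A^{(2)}=I$ give, by a short computation, $A^{(3)}(t,(x_1,0))=\mathrm{diag}(\ast,1)$, whose conormal in the normal direction $e_2$ is $e_2$. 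Inside $B_\eta(0)$ only $\Gamma^{(4)}(t_0)$ is relevant and there it is the horizontal axis with $n=e_2$; extending the computation of \eqref{claim2} to the points $(x_1,0)$, $|x_1|<\eta$ --- where $D\P$ is diagonal and $\dot\P$, $D\P\dot\Psi(\Psi^{-1})$ are parallel to $e_1$, while $A^{(3)}$ is diagonal --- one finds $A^{(4)}(t,(x_1,0))=\mathrm{diag}(\ast,1)$, so $(A^{(4)})^T e_2=e_2$. Setting $W(x):=(A^{(4)})^T(\bar t,x)\,n(x)$ for a fixed $\bar t\in[t_0,t_1]$ then produces a $t$-independent field satisfying \eqref{normal}, with $W=n=e_2$ in a neighbourhood of the tip.

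I expect \eqref{claim1} to be the genuine obstacle: one must keep $A^{(4)}$ elliptic across the transition annulus $\eta/2\le|x|<\eta$, where $D\P$ is bounded but not close to any fixed matrix. The two ingredients that rescue it are the favourable sign of the correction $(\alpha-c_1)k_\eta'$ in $\partial_{x_1}\P_1$, which forces $\det D\P\ge1$, and the fact that the sign-indefinite corrections are $O(\eta)$ and hence negligible for $\eta$ small. The other delicate point is the bookkeeping in \eqref{normal}, where time-independence of the conormal has to be traced through all four changes of variables.
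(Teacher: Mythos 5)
Your proof is correct, and your argument for the ellipticity \eqref{claim1} is genuinely different from, and cleaner than, the paper's. The paper runs three explicit case-by-case computations ($|y|<\eta/2$, the annulus $\eta/2\le|y|<\eta$, $|y|>\eta$), in the annulus expanding $D\P\,A^{(3)}D\P^T$ componentwise and absorbing the off-diagonal term $q$ by Young's inequality with a carefully tuned $\varepsilon=\frac{9(1-c_1)^2}{c_1^2/2+9(1-c_1)^2}$, ending with an ellipticity constant $c_1^2/4-O(\eta)$ for the $\xi_1^2$-coefficient. You instead observe, once and for all on $B_\eta(0)$, that the favourable sign of $-y_1\partial_1 d$ (forced by $k_\eta'\le0$ and $\alpha\ge c_1$) gives $\det D\P\ge1$, that $\|D\P\|\le C(c_1)$ uniformly in $\eta$ (since $|y|\,|k_\eta'|\le3$), hence $\sigma_{\min}(D\P)\ge1/C(c_1)$, so $D\P A^{(3)}D\P^T$ is elliptic with a constant $c_3/C(c_1)^2$ independent of $\eta$; the three rank-one corrections all carry a factor $\dot\P=O(\eta)$ and are absorbed. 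This replaces the Young-inequality bookkeeping by a clean $\det/\|\cdot\|$ estimate on $D\P$, at the minor cost of invoking a global uniform ellipticity constant $c_3$ for $A^{(3)}$, which the paper never states as such but only verifies locally; near the tip one has $A^{(3)}=A^{(2)}-|\dot s^{(1)}|^2 e_1\otimes e_1$ and the subsonic bound gives $c_3\approx c_1^2/2$ there, so your final constant is of the same order as the paper's. Your verifications of \eqref{claim2} and \eqref{normal} are essentially the paper's, phrased a bit more carefully (you evaluate $\Psi^{-1}(t,0)$ on the straight portion of $\Gamma^{(2)}(t_0)$ rather than writing $A^{(2)}(y)$ in place of $A^{(2)}(\Psi^{-1}(t,y))$, as the paper loosely does; and you check explicitly that $\dot\P$ and $D\P\dot\Psi$ are horizontal along $\{x_2=0\}$ so the rank-one corrections kill $e_2$).
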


\begin{proof}
Let $t\in [t_0,t_1]$ and $x\in \Omega^{(4)}$ be fixed. Setting $y:=\P^{-1}(t,x) \in \Omega$, we distinguish the three cases $|y|<\eta/2$, $\eta/2 \leq |y|\leq \eta$, and $|y|>\eta$, where $\eta$ is the constant introduced in \eqref{def-k}. 

Without loss of generality, up to take $\eta$ smaller, recalling \eqref{intorno}, we may assume that in $B_\eta(0)$
$$
D\Psi(\Psi^{-1}(t,y)) = I \,,\quad \dot{\Psi}(t,\Psi^{-1}(y)) = - \dot{s}^{(1)}(t)e_1\,,
$$
so that
$$
A^{(3)}(t,P^{-1}(t,x))= A^{(3)}(t,y) =  A^{(2)}(y)- |\dot{s}^{(1)}(t)|^2e_1\otimes e_1 \,.  
$$
Moreover, we take $\eta<r$ with $r$ associated to $\e = c_1^2/2$ as in \eqref{ell-A2}, so that the ellipticity constant of $A^{(2)}$ in $B_{\eta}(0)$ is $(1-c_1^2/2)$.

\smallskip

If $|y|<\eta/2$ we have
$$
D \P(t,y) = \left( 
\begin{array}{ccc}
\frac{1}{\alpha(t)} & 0 
\\ 0  & 1 
\end{array}
\right) \,,\quad \dot{\P}(t,y) =  \left( 
\begin{array}{ccc}
-y_1\frac{\dot{\alpha}(t)}{\alpha^2(t)}
\\ 0
\end{array}
\right)\,,
$$
thus
\begin{equation}\notag
A^{(4)}(t,x)=  \left( 
\begin{array}{ccc}
\frac{1}{\alpha(t)} & 0 
\\ 0  & 1 
\end{array}
\right) A^{(2)}(y) \left( 
\begin{array}{ccc}
\frac{1}{\alpha(t)} & 0 
\\ 0  & 1 
\end{array}
\right) -  \left( 
\begin{array}{ccc}
 \frac{ |\dot{s}^{(1)}(t)|^2}{\alpha(t)^2}  + y_1  \frac{2 \dot{s}^{(1)}(t)\dot{\alpha}(t)}{\alpha^3(t)} + y_1^2\frac{\dot{\alpha}^2(t)}{\alpha^4(t)}
\ & 0 
\\ 0  & 0
\end{array}
\right)\,.
\end{equation}
Since $P^{-1}(t,0)=0$ and $A^{(2)}(0)=I$, we immediately get \eqref{claim2}.
For $\xi$ arbitrary vector of $\mathbb R^2$, we have
$$
(A^{(4)}(t,x) \xi) \cdot \xi  \geq \left[\frac{1-c_1^2/2-  |\dot{s}^{(1)}(t)|^2}{\alpha^2(t)}  - 2 y_1  \frac{ \dot{s}^{(1)}(t)\dot{\alpha}(t)}{\alpha^3(t)} - y_1^2\frac{\dot{\alpha}^2(t)}{\alpha^4(t)}
\right] \xi_1^2 + (1-c_1^2/2) \xi _2^2\,.
$$
In view of the bounds \eqref{ipotesi-s}, \eqref{def-c2}, and \eqref{bound-s1}, we get
$$
|\dot{\alpha}(t)|\leq \frac{c_2}{c_1}\,,\quad  c_1 \leq |\alpha(t)| \leq 1\,,
$$
in particular
$$
(A^{(4)}(t,x) \xi) \cdot \xi  \geq \left(\frac{c_1^2}{2} - 2 \eta\frac{c_2}{c_1^4} - \eta^2\frac{c_2^2}{c_1^6}
\right) \xi_1^2 + \frac{\xi _2^2}{2}\,.
$$
The coefficient of $\xi_1$ is bounded from below, provided that $\eta$ is small enough. This gives the statement \eqref{claim1} for $y\in B_{\eta/2}(0)$.

\smallskip

Let now $\eta /2 < |y|<\eta$. In this case we have 
$$
D \P(t,y) = \frac{1}{d^2}\left( 
\begin{array}{ccc}
d - y_1 \partial_1 d \ &  - y_1 \partial_2 d
\\ 0  & d^2 
\end{array}
\right) \,,\quad \dot{\P}(t,y) =  \frac{1}{d^2}\left( 
\begin{array}{ccc}
 - y_1 \partial_t d
\\ 0
\end{array}
\right)\,.
$$
Again exploiting the ellipticity of $A^{(2)}$ with constant $(1-c_1^2/2)\geq \frac12 $ and setting 
$$
m:= y_1^2 (\partial_t d)^2 + 2 y_1  \dot{s}^{(1)}(t)  (\partial_t d)  (d - y_1 \partial_1 d)\,,\quad p:=  (d-y_1 \partial_1 d)   \,,\quad q:=  - y_1 \partial_2 d   \,,
$$
we get
\begin{align}
(A^{(4)}(t,x) \xi) \cdot \xi  & \geq  \frac12 \| DP^{T}(t,y)\xi  \|^2 - \frac{m}{d^4} \xi_1^2  = \frac{1}{2 d^4} \left[ ( p^2 + q^2 - 2 m ) \xi_1^2 + 2 q d^2 \xi_1\xi_2 + d^4 \xi_2^2  \right]\notag
\\ & \geq \frac{1}{2} \left[  p^2  - \left ( \frac{1}{\varepsilon}  -1 \right) q^2 - 2 |m|  \right]  \xi_1^2  + \frac{1}{2} (1-\varepsilon) \xi_2^2 \,,\label{coerc-A4}
\end{align}
where in the last inequality we have have used $d\leq 1$ and the Young's inequality, with $0 < \varepsilon < 1$, whose precise value will be fixed later.
Let us prove that, if $\eta$ and $\varepsilon$ are well chosen, the coercivity of $A^{(4)}$ is guaranteed. The identities
$$
\nabla_y d (t,y) = (\alpha(t)-c_1) \frac{y}{|y|} k_\eta'(|y|)\,,\quad  \partial_t d (t,y) =  - \frac{\dot{s}^{(1)}(t) \ddot{s}^{(1)}(t) k_\eta(|y|)}{\alpha(t)} \,,
$$
together with the bounds
$$
0\leq k_\eta\leq 1\,,\quad c_1 \leq d \leq \alpha \leq 1 \,\quad -\frac{3}{\eta}\leq k_\eta'\leq 0\,,
$$
give
\begin{align*}
& \frac{1}{d^4}\geq 1\,,
\\
& p=d+ \frac{y_1^2}{|y|}(\alpha-c_1)|k'_\eta(|y|)| \geq d \geq c_1\,, 
\\
&  q^2 =  (\alpha -c_1)^2 \frac{y_1^2 y_2^2}{|y|^2}(k'_\eta(|y|))^2    \leq  9  (1-c_1)^2 \,,
\\
&  |m| \leq  \frac{ 42 c_2  (1-c_1^2) }{c_1} \eta + \frac{c_2^2  (1-c_1^2) }{c_1^2}\eta^2\,.
\end{align*}
Inserting these estimates into \eqref{coerc-A4}, we infer that
$$
(A^{(4)}(t,x) \xi) \cdot \xi \geq \left[ \frac{c_1^2}{2} - \frac{9}{2} \left( \frac{1}{\varepsilon} - 1  \right) (1-c_1)^2 -  \frac{ 42 c_2  (1-c_1^2) }{c_1} \eta -  \frac{c_2^2  (1-c_1^2) }{c_1^2}\eta^2\right] \xi_1^2 + \frac{ 1-\varepsilon}{2} \xi_2^2\,.
$$
Taking
$$
\varepsilon = \frac{ 9 (1-c_1)^2  }{c_1^2/2 + 9 (1-c_1)^2 }\in (0,1)
$$
we have
$$
 \frac{c_1^2}{2} - \frac{9}{2} \left( \frac{1}{\varepsilon} - 1 \right) (1-c_1)^2 = \frac{c_1^2}{4}\,.
$$
Thus, taking $\eta$ small enough, we obtain the desired coercivity of $A^{(4)}$.

\smallskip

Finally, if $|y|>\eta$ we have
$$
D \P(t,y) = \left( 
\begin{array}{ccc}
\frac{1}{c_1} & 0 
\\ 0  & 1 
\end{array}
\right) \,,\quad \dot{\P}(t,y) =0\,,
$$
and condition \eqref{claim1} is readily satisfied in view of the ellipticity of $A^{(3)}$.

\medskip

The assertion \eqref{normal} is clearly verified for $A^{(2)}$: the tensor field does not depend on time and equals to the identity on the fracture, in a neighborhood of the origin. The last diffeomorphisms $\Psi$ and $\P$ both act in a neighborhood of the origin modifying the set only in the horizontal component; in particular they don't modify the normal to the fracture in a neighborhood of the origin. As for the external boundary, $\Psi$ is the identity and $P$ acts as a constant dilation, so that 
$$
W(x)=\left( 
\begin{array}{ccc}
\frac{1}{c_1} & 0 
\\ 0  & 1 \end{array}\right) A^{(2)}(c_1 x_1, x_2)\left( 
\begin{array}{ccc}
\frac{1}{c_1} & 0 
\\ 0  & 1 \end{array}\right)n(x) \quad \hbox{on }\partial_N \Omega^{(4)}\,.
$$
\end{proof}

\begin{remark}\label{differenze}
The idea of the proof of Lemma \ref{lem-A4} is taken from \cite[Lemma 4.1]{NS}. Let us underline the main differences: in \cite{NS} the authors deal with the identity matrix as starting tensor field (here instead we have $A^{(3)}$) and consider only the dynamics for which the acceleration of the tip is bounded by a precise constant depending on $c_1$ (in place of our bound $c_2$, not fixed a priori). We also point out that in \cite{NS} the study of the ellipticity of the transformed tensor field, in the annulus $\eta/2 < |y| < \eta$, is carried out forgetting the coefficients out of the diagonal.
\end{remark}

\begin{remark}\label{remrho}
In our construction, a control on the maximal amplitude $\rho$ of the time interval $[t_0,t_1]$ is needed only in Step 2: roughly speaking, in order to straighten the set $\Gamma^{(1)}(t_1)\setminus \Gamma^{(1)}(t_0)$ and to remain inside $\Omega$, we need to have enough room. A sufficient condition is that the length of the set, which is at most $\rho \max_{t\in[0,T]} \dot{s}^{(1)}(t)$, has to be less than or equal to the distance of the crack-tip $\gamma^{(1)}(s^{(1)}(t))$ from the boundary $\partial \Omega$, which is, thanks to the assumption $\Gamma^{(1)}(T)\setminus \Gamma^{(1)}(0)\subset \subset\Omega$, bounded from below by a positive constant. Notice that if we considered also a further diffeomorphism which is the identity in a neighborhood of $\Gamma^{(1)}(T)\setminus \Gamma^{(1)}(0)$ and stretches $\Omega$ near the boundary, then our results could be stated for every time $t\in[0,T]$.
\end{remark}

\section{Proof of the representation result}\label{sec-dec}
 In this section we derive the decomposition result \eqref{decomposition} locally in time, namely in a time interval $[t_0,t_1]$ small enough (see \S \ref{ssec-cov} and Remark \ref{remrho}). Finally, in \S \ref{maicol}, we give a global representation of $u$, valid in the whole time interval $[0,T]$.

\subsection{Preliminaries on semigroup theory}
Here we recall some classical facts of semigroup theory. Standard references on the subject are the books \cite{P} and \cite{Kato}.

\medskip

Let $X$ be a Banach space and $\mathcal A(t):\mathrm{D}(\mathcal A(t))\subset X \to X$ a differential operator. Consider the evolution problem 
\begin{equation}\label{system-V}
\partial_t V(t) + \mathcal A (t) V(t) = {G}(t)\,,
\end{equation}
with initial condition $V(0)=V_0$ (the boundary conditions are encoded in the function space $X$).

\begin{definition}\label{CD}
A triplet $\{\mathcal A;X,Y\}$ consisting of a family $\mathcal A=\{\mathcal A(t)\,,\ t\in [0,T]\}$ and a pair of real separable Banach spaces $X$ and $Y$ is called a {\it constant domain system} if the following conditions hold:
\begin{itemize}
\item[i)] the space $Y$ is embedded continuously and densely in $X$;
\item[ii)] for every $t$ the operator $\mathcal A(t)$ is linear and has constant domain $\mathrm{D}(\mathcal A(t))\equiv Y$;
\item[iii)] the family $\mathcal A$ is a stable family of (negative) generators of strongly continuous semigroups on $X$;
\item[iv)] the operator $\partial_t\mathcal A$ is essentially bounded from $[0,T]$ to the space of linear functionals from $Y$ to $X$.
\end{itemize}
\end{definition}

\begin{theorem}\label{thm-kato}
Let $\{\mathcal A;X,Y\}$ form a constant domain system. Let $V^0\in Y$ and {$G\in \mathrm{Lip}([0,T];X)$}. Then there exists a unique solution $V \in C([0,T]; Y) \cap C^1([0,T]; X)$ of \eqref{system-V} with $V(0)=V^0$.
\end{theorem}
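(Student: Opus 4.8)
The plan is to recognize \eqref{system-V} as a non‑autonomous linear Cauchy problem of hyperbolic type and to run the classical Kato scheme (see \cite{P,Kato}): first construct the evolution operator (fundamental solution) $\{U(t,s)\,:\,0\le s\le t\le T\}$ attached to the homogeneous problem, and then obtain $V$ by Duhamel's formula. For the construction of $U(t,s)$ I would, after the harmless substitution $\mathcal{A}(t)\mapsto\mathcal{A}(t)+\lambda_0 I$ with $\lambda_0$ beyond the growth bound in the stability assumption iii) — which amounts to the rescaling $V\mapsto e^{\lambda_0 t}V$ and makes each $\mathcal{A}(t)$ boundedly invertible with $\mathcal{A}(t)^{-1}\in\mathcal{L}(X,Y)$ uniformly in $t$ — freeze the coefficient on a partition of $[0,T]$: composing the strongly continuous semigroups $e^{-(t-s)\mathcal{A}(t_j)}$, which exist by iii), produces approximants that are uniformly bounded in $\mathcal{L}(X)$ by the stability estimate and converge strongly, as the mesh tends to $0$, to a family $U(t,s)$ with $U(s,s)=I$, $U(t,r)=U(t,s)U(s,r)$, strongly continuous on $X$. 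Exploiting iv) (the essential boundedness of $\partial_t\mathcal{A}$ from $Y$ to $X$) together with a Gronwall argument, one then shows that $U(t,s)$ leaves $Y$ invariant, is strongly continuous into $Y$, and that for $y\in Y$ the map $t\mapsto U(t,s)y$ is $C^1$ into $X$ with $\partial_t U(t,s)y=-\mathcal{A}(t)U(t,s)y$ and $\partial_s U(t,s)y=U(t,s)\mathcal{A}(s)y$.

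This already settles the homogeneous problem: $t\mapsto U(t,0)V^0$ lies in $C([0,T];Y)\cap C^1([0,T];X)$ and solves \eqref{system-V} with $G=0$. Uniqueness — for the full problem — reduces to this case, since if $W$ is any solution with $W(0)=0$ and $G=0$, then for fixed $s$ the map $\tau\mapsto U(s,\tau)W(\tau)$ is $C^1$ into $X$ with derivative $U(s,\tau)\bigl(\dot W(\tau)+\mathcal{A}(\tau)W(\tau)\bigr)=0$, whence $W(s)=U(s,0)W(0)=0$.

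For the inhomogeneous problem I would set $V(t):=U(t,0)V^0+\int_0^t U(t,\tau)G(\tau)\,\de\tau$ and show that the Duhamel term belongs to $C([0,T];Y)\cap C^1([0,T];X)$ and has derivative $G(t)-\mathcal{A}(t)\int_0^t U(t,\tau)G(\tau)\,\de\tau$. When $G\in C^1([0,T];X)$ this is standard: writing $G(\tau)=\mathcal{A}(\tau)\bigl(\mathcal{A}(\tau)^{-1}G(\tau)\bigr)$ and integrating by parts in $\tau$ via $\partial_\tau U(t,\tau)=U(t,\tau)\mathcal{A}(\tau)$ gives
\[
\int_0^t U(t,\tau)G(\tau)\,\de\tau=\mathcal{A}(t)^{-1}G(t)-U(t,0)\,\mathcal{A}(0)^{-1}G(0)-\int_0^t U(t,\tau)\,\partial_\tau\!\bigl[\mathcal{A}(\tau)^{-1}G(\tau)\bigr]\,\de\tau\,,
\]
and, since $\partial_\tau[\mathcal{A}(\tau)^{-1}]=-\mathcal{A}(\tau)^{-1}(\partial_\tau\mathcal{A}(\tau))\mathcal{A}(\tau)^{-1}\in L^\infty([0,T];\mathcal{L}(X,Y))$ by iv), the right-hand side is manifestly $Y$-valued, strongly continuous into $Y$, and $C^1$ into $X$, with the asserted derivative. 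To cover the case of merely $G\in\mathrm{Lip}([0,T];X)$, I would extend $G$ to a Lipschitz $X$-valued map on $\re$ and mollify, producing $G_n\in C^1([0,T];X)$ with $G_n\to G$ uniformly and $\sup_n\mathrm{Lip}(G_n)<\infty$; the solutions $V_n$ then satisfy, by the representation above and the uniform Lipschitz bound, uniform estimates in $C([0,T];Y)\cap C^1([0,T];X)$ — in particular $\sup_{n,t}\|\mathcal{A}(t)V_n(t)\|_X<\infty$ — and an Arzel\`a--Ascoli argument, together with the closedness of each $\mathcal{A}(t)$, yields a limit $V\in C([0,T];Y)\cap C^1([0,T];X)$ solving the problem; its uniqueness was already shown.

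The bulk of the work sits in the construction of $U(t,s)$ and in checking its good behaviour on the smaller space $Y$ — precisely Kato's theorem, for which assumptions i)--iv) are tailored — while the only additional bookkeeping relative to the most standard textbook statements is the mollification step in the last paragraph, forced by the fact that $G$ is merely Lipschitz, and not $C^1$, in time.
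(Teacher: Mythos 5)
The paper does not prove Theorem~\ref{thm-kato}: it is recalled as a classical fact of semigroup theory, and the proof is left to the cited monographs of Pazy~\cite{P} and Kato~\cite{Kato}. Your sketch is a faithful reconstruction of the standard Kato construction that sits behind that citation — building the evolution family by freezing coefficients on a partition, solving the homogeneous problem via $U(t,0)V^0$, handling the inhomogeneity by Duhamel, and finally mollifying to relax the forcing term $G$ from $C^1$ to Lipschitz.

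The only step not already contained in the cited references is this last relaxation, and there the argument as written is a bit too loose. Invoking Arzel\`a--Ascoli in $C([0,T];X)$ requires pointwise relative compactness of $\{V_n(t)\}_n$, which a uniform bound on $\|\mathcal A(t)V_n(t)\|_X$ does not supply in a general Banach space $X$; it would if $X$ were reflexive (which it is in the paper's application, where $X$ is a product of Hilbert spaces), but the abstract statement carries no such hypothesis. A cleaner route avoids compactness altogether: mollification of a Lipschitz $G$ gives $G_n\to G$ uniformly and $\dot G_n\to\dot G$ in $L^{1}([0,T];X)$, so in your integrated-by-parts representation the three terms converge in $Y$ — the first two by uniform convergence of $G_n$ and boundedness of $\mathcal A(\tau)^{-1}\colon X\to Y$, and the integral term because the $Y$-valued integrand converges in $L^1([0,T];Y)$ while $U(t,\tau)$ is uniformly bounded on $Y$. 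Hence $\mathcal A(t)V_n(t)$ converges in $X$ directly, closedness of $\mathcal A(t)$ identifies the limit, and one gets $V\in C([0,T];Y)\cap C^1([0,T];X)$ without any compactness argument. With this small repair your proof is complete and recovers exactly what the paper takes for granted from the literature.
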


\subsection{Local representation result in the cylindrical domain}
The chain of transformations introduced in \S \ref{ssec-cov} defines the family of time-dependent diffeomorphisms 
\begin{equation}\label{def-Phi}
\Phi(t):=  \P (t) \circ \Psi (t) \circ \L \circ \C \,,\quad \Phi(t): \overline{\Omega}\to \overline{\Omega}^{(4)}\,, 
\end{equation}
which map $\Gamma$ into $\Gamma^{(4)}$, $\Gamma(t)$ into $\Gamma^{(4)}(t_0)$ for every $t\in [t_0,t_1]$, and $\partial \Omega$ into $\partial \Omega^{(4)}$. More precisely, the Dirchlet part $\partial_D \Omega$ is mapped into $\partial_D\Omega^{(4)}:= \{(\Lambda_1(x)/c_2, \Lambda_2(x))\ :\ x \in \partial_D \Omega\}$, the Neumann one $\partial_N \Omega$ into  $\partial_N\Omega^{(4)}:= \{(\Lambda_1(x)/c_2, \Lambda_2(x))\ :\ x \in \partial_N \Omega\}$.  For the sake of clarity, we denote by $x$ the variables in $\Omega$ and by $y$ the new variables in $\Omega^{(4)}$.

Looking for a solution $u$ to \eqref{eq-1} is equivalent to look for $v:= u\circ \Phi^{-1}$, solution to 
\begin{equation}\label{eq-v}
\ddot{v}(t) - \div (A^{(4)} \nabla v (t) ) + {p}(t) \cdot \nabla v(t) - 2 {q}(t) \cdot \nabla \dot{v}(t) = {g}(t) \quad \hbox{in }\Omega^{(4)}\setminus \Gamma^{(4)}(t_0)\,,
\end{equation}
supplemented by the boundary conditions
\begin{equation}\label{bc-v}
v=0 \quad \hbox{on }\partial_D \Omega^{(4)}\,,\quad \partial_W v = 0 \quad \hbox{on }\partial_N \Omega^{(4)} \cup\Gamma^{(4)}(t_0)\,,
\end{equation} 
and by suitable initial conditions. Here $W$ is the vector field introduced in \eqref{normal} - Lemma \ref{lem-A4}, and (see also  \cite{DM-Luc})
\begin{align*}
{p}(t,y) & := - [A^{(4)}(t,y) \nabla(\mathrm{det} D\Phi^{-1}(t,y))+ \partial_t ({q}(t,y) \mathrm{det} D\Phi^{-1}(t,y)    )   ]\mathrm{det} D \Phi(t,\Phi^{-1}(t,y))\,,
\\
{q}(t,y) & := - \dot{\Phi} (t, \Phi^{-1}(t,y))\,,
\\
{g}(t,y) & := f(t, \Phi^{-1}(t,y))\,.
\end{align*}

The characterization of $u$ will follow from that of $v$, slightly easier to be derived. As already pointed out in the Introduction, the advantages in dealing with problem \eqref{eq-v} are essentially 3: first of all, the domain is cylindrical and constant in time; then, the fracture set is straight near the tip; finally, even if the coefficients depend on space and time, the principal part of the spatial differential operator is constant at the crack-tip.

Before stating the result, we define
\begin{align*}
& H^1_D(\Omega^{(4)}\setminus \Gamma^{(4)}(t_0)) :=\{v \in H^1(\Omega^{(4)}\setminus \Gamma^{(4)}(t_0)) \ :\ v = 0 \ \hbox{on }\partial_D \Omega^{(4)}\}\,, 
\\
& {\mathcal H} := \{ v \in H^2(\Omega^{(4)}\setminus \Gamma^{(4)}(t_0)) \ : \eqref{bc-v}\ \hbox{hold true} \} \oplus \{k \zeta S\ :\ k\in \mathbb R\}\,,
\end{align*}
where $\zeta$ is a cut--off function whose support contains the origin and
\begin{equation}\label{def-S}
S(y):=Im (\sqrt{y_1 + i y_2})\,.
\end{equation}

\begin{figure}[h]                                             
\begin{center}                                                
{\includegraphics[height=1.5truecm] {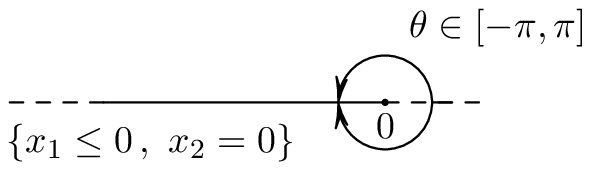}}                                                
\end{center}                                                  
\caption{{\it In polar coordinates, the function $S$ reads $S(r,\theta)=r^{1/2} \sin (\theta/2)$, where $r$ is the distance from the origin and $\theta\in [-\pi,\pi]$ is the angle which has a discontinuity on the horizontal half line $\{x_1\leq 0\}$.}}\label{S} 
\end{figure}

\begin{proposition}\label{prop-v}
Take $v^0 \in \mathcal H$, $v^1 \in H^1_D(\Omega^{(4)}\setminus \Gamma^{(4)}(t_0))$, and $g\in \mathrm{Lip}([t_0,t_1];L^2(\Omega^{(4)}))$. Then there exists a unique solution $v$ to \eqref{eq-v}-\eqref{bc-v} with $v(t_0)=v^0$, $\dot{v}(t_0)=v^1$ in the class
$$
v\in C([t_0,t_1];  {\mathcal H}) \cap C^1([t_0,t_1]; H^1_D(\Omega^{(4)}\setminus \Gamma^{(4)}(t_0))) \cap C^2([t_0,t_1];L^2(\Omega^{(4)}))\,.
$$
\end{proposition}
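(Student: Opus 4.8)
The plan is to recast the second-order-in-time equation \eqref{eq-v} as a first-order system and apply the abstract existence-uniqueness result, Theorem \ref{thm-kato}. I would set $X := H^1_D(\Omega^{(4)}\setminus\Gamma^{(4)}(t_0)) \times L^2(\Omega^{(4)})$ and $Y := \mathcal H \times H^1_D(\Omega^{(4)}\setminus\Gamma^{(4)}(t_0))$, and write $V(t) := (v(t),\dot v(t))$. The equation then takes the form $\partial_t V(t) + \mathcal A(t) V(t) = G(t)$ with $G(t) := (0, g(t))$ and
\[
\mathcal A(t)\begin{pmatrix} v \\ w\end{pmatrix} = \begin{pmatrix} -w \\ -\div(A^{(4)}(t)\nabla v) + p(t)\cdot\nabla v - 2 q(t)\cdot\nabla w \end{pmatrix},
\]
where the domain of $\mathcal A(t)$ is designed to encode exactly the boundary conditions \eqref{bc-v} and the membership of the first component in $\mathcal H$. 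The initial datum $V^0 := (v^0, v^1)$ lies in $Y$ by hypothesis, and $G \in \mathrm{Lip}([t_0,t_1];X)$ since $g\in\mathrm{Lip}([t_0,t_1];L^2(\Omega^{(4)}))$. Then Theorem \ref{thm-kato} yields $V \in C([t_0,t_1];Y)\cap C^1([t_0,t_1];X)$, which unpacks precisely into the claimed regularity class for $v$, the extra $C^2([t_0,t_1];L^2(\Omega^{(4)}))$ coming from reading off $\ddot v$ from the equation \eqref{eq-v} once $v \in C([t_0,t_1];\mathcal H)$, $\dot v \in C([t_0,t_1];H^1_D)$, and $g\in C([t_0,t_1];L^2)$ are known, since every term $-\div(A^{(4)}\nabla v)$, $p\cdot\nabla v$, $2q\cdot\nabla\dot v$, $g$ then lies in $C([t_0,t_1];L^2(\Omega^{(4)}))$.

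The core of the work is verifying that $\{\mathcal A; X, Y\}$ is a constant domain system in the sense of Definition \ref{CD}. Conditions (i) and (ii) are essentially by construction: $Y\hookrightarrow X$ densely, and the domain of $\mathcal A(t)$ is $Y$ independently of $t$ — this is where it matters that, although the coefficients of $A^{(4)}$ depend on $t$, the boundary operator on $\partial_N\Omega^{(4)}\cup\Gamma^{(4)}(t_0)$ is the fixed field $W$ by \eqref{normal}, so the space $\mathcal H$ of admissible $v$ does not move with $t$. Condition (iv), essential boundedness of $\partial_t\mathcal A$ from $[t_0,t_1]$ into linear maps $Y\to X$, reduces to smoothness in $t$ of $A^{(4)}$, $p$, $q$, which follows from the regularity of the diffeomorphisms built in \S\ref{ssec-cov} (all of class at least $C^{2,1}$ in space, with the requisite $t$-regularity inherited from $s\in C^{3,1}$); one checks that $t\mapsto \partial_t A^{(4)}(t,\cdot)$, $\partial_t p(t,\cdot)$, $\partial_t q(t,\cdot)$ are bounded in the relevant norms.

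The main obstacle is condition (iii): showing that $\{\mathcal A(t)\}_{t\in[t_0,t_1]}$ is a \emph{stable} family of negative generators of strongly continuous semigroups on $X$. I would proceed exactly as in \cite[\S 4]{NS}: equip $X$, for each fixed $t$, with the equivalent, $t$-dependent inner product
\[
\langle (v,w),(\tilde v,\tilde w)\rangle_{X,t} := \int_{\Omega^{(4)}} A^{(4)}(t)\nabla v\cdot\nabla\tilde v\,\de y + \int_{\Omega^{(4)}} w\,\tilde w\,\de y,
\]
which is indeed equivalent to the standard one uniformly in $t$ thanks to the coercivity \eqref{claim1} of Lemma \ref{lem-A4} together with the upper bound on $A^{(4)}$; this uniformity is what produces the \emph{stability} constants in the Hille--Yosida–type estimate being independent of $t$. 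Then a computation of $\langle \mathcal A(t) V, V\rangle_{X,t}$, integrating by parts the $\div(A^{(4)}\nabla v)$ term (the boundary terms vanishing by the definition of the domain via \eqref{bc-v}) and absorbing the first-order terms $p\cdot\nabla v$ and $2q\cdot\nabla w$ using Cauchy--Schwarz and the boundedness of $p,q$, shows that $\mathcal A(t) + \omega I$ is monotone (accretive) for a suitable $\omega$ uniform in $t$; that $\lambda I + \mathcal A(t)$ is surjective for large $\lambda$ follows from Lax--Milgram applied to the associated elliptic problem, where the singular function $S$ enters only through the fixed subspace $\{k\zeta S\}$ and does not affect the argument. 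This gives, via Lumer--Phillips, that each $\mathcal A(t)$ generates a $C_0$-semigroup, and the uniform bounds give stability; the strong continuity of $t\mapsto \mathcal A(t)$ in the appropriate sense, needed to invoke Theorem \ref{thm-kato}, again follows from the $t$-regularity of $A^{(4)}, p, q$. Once stability is in hand, all four conditions of Definition \ref{CD} hold, Theorem \ref{thm-kato} applies, and the proof is complete.
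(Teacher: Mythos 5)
Your proposal follows essentially the same route as the paper: recast \eqref{eq-v} as a first-order system $\partial_t V + \mathcal A(t) V = G$ with the same $\mathcal A$, $X=H^1_D\times L^2$, $Y=\mathcal H\times H^1_D$, and $G=(0,g)$, verify the constant-domain-system conditions of Definition~\ref{CD}, and invoke Theorem~\ref{thm-kato}. The paper states these same ingredients (constant domain via \eqref{claim2} and \eqref{normal}, equi-coercivity via \eqref{claim1}, the integration-by-parts identity for the $q\cdot\nabla$ term, and the listed coefficient regularity) but delegates the detailed verification to \cite[Theorem 4.7]{NS}, whereas you spell out the stability argument (the $t$-dependent inner product, quasi-accretivity, Lumer--Phillips, Lax--Milgram); one small caveat is that in the pure Neumann case $\partial_D\Omega=\emptyset$ your proposed $t$-dependent bilinear form on $X$ is degenerate on constants in the first component and must be supplemented by an $L^2$ term to be an inner product.
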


\begin{proof}
Once we show that the triplet $\{\mathcal A; X; Y\}$ defined by
\begin{align*}
& \mathcal A(t):= \left(\begin{array}{ccc}
0 & -1
\\
-\div (A^{(4)}(t) \nabla (\cdot)) +  p(t)\cdot \nabla(\cdot) &  - 2 q(t) \cdot \nabla (\cdot)
\end{array}\right) \,, 
\\
& X:=H^1_D(\Omega^{(4)}\setminus \Gamma^{(4)}(t_0)) \times L^2(\Omega^{(4)})\,,
\\
&
Y:= \mathcal H \times H^1_D(\Omega^{(4)}\setminus \Gamma^{(4)}(t_0))\,,
\end{align*}
is a constant domain system in $[t_0,t_1]$ (cf. Definition \ref{CD}), we are done. Indeed, we are in a position to apply Theorem \ref{thm-kato} with 
$$
G(t):= \left(\begin{array}{c}0 \\  g(t) \end{array}\right)\,,
$$
and the searched $v$ is the second component of the solution $V$ to \eqref{system-V}.

The detailed proof of properties (i)-(iv) in Definition \ref{CD} can be found in \cite[Theorem 4.7]{NS}, with the appropriate modifications (see Remark \ref{differenze}). Here we limit ourselves to list the main ingredients.

First of all, the domain of $\div (A^{(4)}(t)\nabla (\cdot))$ is constant in time: in view  \eqref{claim2}, its principal part, evaluated at the crack tip, is the Laplace operator for every $t$, thus the domain of $\div (A^{(4)}(t)\nabla (\cdot))$ can be decomposed as the sum $\{ v \in H^2(\Omega^{(4)}\setminus \Gamma^{(4)}(t_0)) \ :\ \eqref{bc-v}\ \hbox{holds true} \} \oplus \{\zeta S\}=: \mathcal H$ (cf. \cite[Theorem 5.2.7]{Gr}); moreover,  in view of \eqref{normal}, the boundary conditions \eqref{bc-v} do not depend on time. 

Other key points are the equi coercivity in time of the bilinear form
$$
(w_0,w_1)\mapsto (A^{(4)}(t) \nabla w_0)\cdot \nabla w_1
$$ 
in $H^1_D(\Omega^{(4)}\setminus \Gamma^{(4)}(t_0))$, guaranteed by \eqref{claim1}, and the property
$$
 \int_{\Omega^{(4)}\setminus \Gamma^{(4)}(t_0)}   ({q}(t)\cdot \nabla \varphi)\, \varphi\, \de y = -\frac12 \int_{\Omega^{(4)}\setminus \Gamma^{(4)}(t_0)} \varphi^2 \div {q}(t)\, \de y\,,
$$
valid for every $\varphi \in H^1_D(\Omega^{(4)}\setminus \Gamma^{(4)}(t_0))$.

Finally, the needed continuity of the differential operator is ensured by the following regularity properties of the coefficients: for every $i,j,k\in \{1,2\}$,
\begin{align*}
& A_{i,j}^{(4)}(t) \in C^0(\Omega^{(4)})   \quad \forall t\in [t_0,t_1]
\\
&
A_{i,j}^{(4)}\,,\ p_i\,,\, q_i \in \mathrm{Lip}([t_0,t_1]; L^\infty(\Omega^{(4)}))\,,
\\
& \| \partial_k  A_{i,j}^{(4)}(t )\|_{ L^\infty(\Omega^{(4)})}\,,\  \| \div q(t)\|_{ L^\infty(\Omega^{(4)})} \leq C\,,
\end{align*}
for a suitable constant $C>0$ independent of $t$.
\end{proof}

\subsection{Local representation result in the time-dependent domain}
We are now in a position to prove the following representation result for $u$.
\begin{theorem}\label{thm-decomp} Let $f\in C^0([t_0,t_1];H^1(\Omega))\cap {\mathrm{Lip}([t_0,t_1];L^2(\Omega))}$ and  $\zeta(t)$ be a $C^2$ (in time) family of cut--off functions with support in a neighborhood of $\gamma(s(t))$. Consider $u^0$ and $u^1$ of the form
\begin{align*}
& u^0 - k^0S(\Phi(0,\cdot)) \in  H^2(\Omega\setminus \Gamma(t_0))\,,\quad 
\\
& u^1 - \nabla u^0\cdot \left (D\Phi^{-1}(0)\dot{\Phi}(0)\right) \in H^1_D(\Omega \setminus \Gamma(t_0))\,,
\end{align*}
 $u^0$ satisfying the boundary conditions \eqref{bc-u} and $k^0\in \mathbb R$. Then there exists a unique solution to \eqref{eq-1}-\eqref{bc-u} with initial conditions $u(t_0)=u^0$, $\dot{u}(t_0)=u^1$ of the form
\begin{equation}\label{decomp1}
u(t,x)=u^{R}(t,x) + k(t) \zeta(t,x) S(\Phi(t,x))\,,
\end{equation}
where $k$ is a $C^2$ function such that $k(t_0)=k^0$. Moreover, 
$$
u^R \in C^2([t_0,t_1]; L^2(\Omega))\,,\quad \nabla u^R \in C^1([t_0,t_1]; L^2(\Omega;\mathbb R^2))\,,\quad \nabla^2 u^R\in C^0([t_0,t_1];L^2(\Omega;\mathbb R^{2\times 2}))\,,
$$
and $u^{R}(t) \in H^2(\Omega\setminus \Gamma(t))$ for every $t\in [t_0,t_1]$.
\end{theorem}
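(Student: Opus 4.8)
The strategy is to transport the abstract existence and representation result of Proposition~\ref{prop-v} from the cylindrical, straightened configuration $\Omega^{(4)}\setminus\Gamma^{(4)}(t_0)$ back to the original time-dependent domain $\Omega\setminus\Gamma(t)$ via the diffeomorphism $\Phi(t)$ defined in \eqref{def-Phi}. First I would record the precise correspondence between the data: setting $v:=u\circ\Phi^{-1}$, one has $v(t)=u(t)\circ\Phi^{-1}(t)$, and by the chain rule $\dot v(t,y)=\dot u(t,\Phi^{-1}(t,y))+\nabla u(t,\Phi^{-1}(t,y))\cdot\partial_t(\Phi^{-1})(t,y)$; evaluated at $t=t_0$ (where $\Phi(t_0)$ reduces the last three changes of variables to the identity near the tip) this converts the hypotheses $u^0-k^0S(\Phi(0,\cdot))\in H^2(\Omega\setminus\Gamma(t_0))$ and $u^1-\nabla u^0\cdot(D\Phi^{-1}(0)\dot\Phi(0))\in H^1_D(\Omega\setminus\Gamma(t_0))$ into exactly $v^0:=v(t_0)\in\mathcal H$ and $v^1:=\dot v(t_0)\in H^1_D(\Omega^{(4)}\setminus\Gamma^{(4)}(t_0))$, because $S(\Phi(t_0,\cdot))$ pulls back (up to an $H^2$ error coming from the smooth factors $\Phi$ contributes away from the singularity) to the model singular function $\zeta S$. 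One must check that the regularity $f\in C^0([t_0,t_1];H^1(\Omega))\cap\mathrm{Lip}([t_0,t_1];L^2(\Omega))$ is preserved under $g(t,y)=f(t,\Phi^{-1}(t,y))$, which follows from the smoothness in space and $C^2$-in-time dependence of $\Phi$.

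Next I would invoke Proposition~\ref{prop-v} to obtain the unique $v\in C([t_0,t_1];\mathcal H)\cap C^1([t_0,t_1];H^1_D)\cap C^2([t_0,t_1];L^2)$ solving \eqref{eq-v}--\eqref{bc-v} with those initial data, and then define $u(t,x):=v(t,\Phi(t,x))$. By construction $u$ solves \eqref{eq-1}--\eqref{bc-u}: the equivalence of \eqref{eq-1} and \eqref{eq-v} was established in the derivation preceding Proposition~\ref{prop-v}, and the Neumann condition $\partial_W v=0$ on $\Gamma^{(4)}(t_0)$ transforms back to $(A\nabla u)\cdot n=0$ on $\Gamma(t)$ precisely because of \eqref{normal} in Lemma~\ref{lem-A4}. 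To produce the decomposition \eqref{decomp1}, I would use that $v(t)\in\mathcal H$ means $v(t)=v^R(t)+k(t)\zeta S$ with $v^R(t)\in H^2(\Omega^{(4)}\setminus\Gamma^{(4)}(t_0))$ satisfying \eqref{bc-v}, and the coefficient $k(t)$ depends on $t$ with the same $C^2$ regularity as the $\mathcal H$-valued curve $t\mapsto v(t)$ (since the splitting $\mathcal H=H^2_{\mathrm{bc}}\oplus\mathbb R\zeta S$ is a topological direct sum, the projection onto the one-dimensional factor is continuous, and composing with $v\in C^2([t_0,t_1];L^2)\cap C([t_0,t_1];\mathcal H)$ gives $k\in C^2$). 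Pulling back through $\Phi(t)$ and absorbing the difference $\zeta(t,x)S(\Phi(t,x))-\big(\zeta S\big)\circ\Phi(t)$ — which is $H^2$ because the two cut-offs agree near the tip and $S\circ(\text{smooth diffeo fixing the half-line})-S$ is $H^2$ — into $u^R$ yields \eqref{decomp1} with $k(t_0)=k^0$ and the claimed time regularity of $u^R$, $\nabla u^R$, $\nabla^2 u^R$, inherited componentwise from the regularity of $v$ and the smoothness of $\Phi$.

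The main obstacle I anticipate is the careful bookkeeping around the singular function under the change of variables: one must verify that $S(\Phi(t,\cdot))$ has exactly the model singularity $r^{1/2}\sin(\theta/2)$ in the straightened coordinates up to an $H^2$ remainder, uniformly (and $C^2$) in $t$, so that the one-dimensional singular subspace is genuinely preserved and the coefficient $k(t)$ is well-defined and regular. This relies on the fact, built into Steps~1--4, that $\Phi(t)$ maps $\Gamma(t)$ onto the fixed straight half-line near the tip and that $D\Phi(t)$ at the tip is (after Step~4) an isometry composed with the fixed dilation, so the leading-order behavior of the pull-back of $S$ is again $S$; the subleading terms, being multiplied by smoothing factors, land in $H^2$. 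A secondary technical point is checking that the operator $\mathcal A(t)$ in the transformed problem indeed has the constant-domain property uniformly on $[t_0,t_1]$ — but this is precisely what Proposition~\ref{prop-v} already delivers, so here it suffices to cite it. Finally, uniqueness of $u$ follows from uniqueness of $v$ in Proposition~\ref{prop-v} together with the bijectivity of the correspondence $u\leftrightarrow v$.
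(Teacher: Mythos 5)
Your overall strategy — transport the problem to the cylindrical, straightened configuration via $\Phi(t)$, apply Proposition~\ref{prop-v}, and pull the decomposition of $v$ back through $\Phi(t)$ — is exactly the (implicit) route the paper takes; Theorem~\ref{thm-decomp} is stated without a separate proof environment precisely because the authors regard it as the transfer of Proposition~\ref{prop-v} through the change of variables, with Remark~\ref{rem-initcond} explaining the form of the initial data, just as you reconstruct. Your handling of the data correspondence, of the boundary-condition transfer via \eqref{normal}, and of the cut-off bookkeeping (absorbing the $H^2$ difference between $\zeta(t,x)S(\Phi(t,x))$ and $(\zeta S)\circ\Phi(t)$ into $u^R$) are all in the right spirit.

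There is, however, a genuine gap in your argument for the $C^2$ time-regularity of $k$. You write that since $\mathcal H=H^2_{\mathrm{bc}}\oplus\mathbb R\zeta S$ is a topological direct sum, the one-dimensional projection is continuous, and composing it with $v\in C^2([t_0,t_1];L^2)\cap C^0([t_0,t_1];\mathcal H)$ yields $k\in C^2$. The projection $\pi:\mathcal H\to\mathbb R$ is continuous only for the $\mathcal H$-topology (the graph norm $\|v^R\|_{H^2}+|k|$), not for the $L^2$-topology: indeed $H^2_{\mathrm{bc}}$ is $L^2$-dense, so one can $L^2$-approximate $\zeta S$ (which has $k$-component $1$) by elements of $H^2_{\mathrm{bc}}$ (which have $k$-component $0$), showing $\pi$ fails to be $L^2$-continuous. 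Consequently, the only regularity of $k$ that follows from composing $\pi$ with $v$ is $k\in C^0([t_0,t_1])$, coming from $v\in C^0([t_0,t_1];\mathcal H)$. The $C^2$ regularity of $v$ is an $L^2$-valued statement and does not see the projection. To reach $k\in C^2$ you need an additional mechanism — for instance a representation of $k(t)$ as an $L^2$-continuous functional of $\ddot v(t)+\text{l.o.t.}-g(t)$ via the elliptic decomposition applied at each fixed time (Grisvard's formula with the dual singular function), plus a bootstrap on the time-regularity of those quantities, or a direct argument along the lines of \cite[Theorem~4.8]{NS}. As written, the step from $k\in C^0$ to $k\in C^2$ is unjustified, and it is precisely this step that drives the claimed regularity $u^R\in C^2([t_0,t_1];L^2)$, $\nabla u^R\in C^1([t_0,t_1];L^2)$: once you know $k\in C^2$, you recover $v^R=v-k\zeta S$ with the needed regularity by subtraction, but not before.

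A secondary, smaller imprecision: you write that at $t=t_0$ ``the last three changes of variables'' reduce to the identity near the tip; in fact only $\Psi(t_0)=\mathrm{id}$, while $\C$, $\L$ and $\P(t_0)$ are generically non-trivial — this does not affect the argument, since the correspondence between the initial data is exactly the one spelled out in Remark~\ref{rem-initcond}, but the parenthetical as stated is inaccurate.
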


\begin{remark}\label{rem-initcond} Notice that the equality $u(t,x)=v(t,\Phi(t,x))$ implies that
$$
u^0 = v^0(\Phi(t_0))\,,\quad u^1 = v^1(\Phi(t_0)) + \nabla v^0(\Phi(t_0))\cdot \dot{\Phi}(t_0)\,.
$$
The last term reads $\dot\Phi(t_0) = \dot{P}(t_0,\Psi\circ \L\circ\C) + DP (t_0,\Psi\circ \L\circ\C) \cdot \dot\Psi (t_0,\L\circ\C)$. 
A priori, $\nabla v^0$ is just in $L^2$ in a neighborhood of the origin and its gradient behaves like $|y|^{-3/2}$; nevertheless, since $\dot{\P}(t,y) \sim(y_1,0)$, we recover the $L^2$ integrability of the gradient of $\nabla v^0(\Phi(t_0)) \cdot \dot{\P}(t_0,\Psi\circ \L\circ\C)$. The same reasoning does not apply for the term $\nabla v^0 (\Phi(t_0))\cdot \left(DP (t_0,\Psi\circ \L\circ\C) \cdot \dot\Psi (t_0,\L\circ\C)\right)$, since the singularity of $\nabla v^0$ in a neighborhood of the orgin is not compensated by $DP \dot\Psi$.
Therefore we are not free to take $u^1\in H^1_D(\Omega\setminus\Gamma(t_0))$ (as, on the contrary, is done in \cite{NS}).
\end{remark}

\begin{remark}
Note that the solution $u$ to \eqref{eq-1}-\eqref{bc-u} displays a singularity only at the crack-tip. Clearly, the fracture is responsible for this lack of regularity. On the other hand, the Dirichlet-Neumann boundary conditions do not produce any further singularity, due to the compatible initial data chosen.
\end{remark}

\subsection{Global representation result in the time-dependent domain}\label{maicol}

We conclude the section by showing an alternative representation formula which can be expressed for every time. This is done providing another expression for the singular function, as in \cite{Laz-Toa}, whose computation does not require to straighten the crack. To simplify the notation we reduce ourselves to the case $A = I$, so that the diffeomorphism $\chi$ coincides with the identity.

The chosen singular part of the solution to problem \eqref{eq-1}-\eqref{bc-u} is a suitable raparametrization of the function $S$ introduced in \eqref{def-S}. More precisely, fixed $t_0,t_1\in[0,T]$ with $0<t_1-t_0<\rho$, for every $t\in[t_0,t_1]$ and $x$ in a neighborhood of $r(t):=\gamma(s(t))$, the singular part reads
\begin{equation}\label{nohatS}
S\left(\frac{\Lambda_1(x)-(s(t)-s(t_0))}{\sqrt{1-|\dot s(t)|^2}},\Lambda_2(x)\right)\,.
\end{equation}
To compute \eqref{nohatS} it is necessary to know the expression of $\Lambda$, which is explicit only for small time and locally in space. We hence provide a more explicit formula for the singular part, which has also the advantage of being defined for every time: for every $t\in [0,T]$ we set
\begin{equation}\label{hatS}
\hat S(t,x):=Im\left(\sqrt{\frac{\left(x-r(t)\right)\cdot \gamma'(s(t))}{\sqrt{1-|\dot s(t)|^2}}+i\left(x-r(t)\right)\cdot n(s(t))}\right)\,,
\end{equation}
where $n(\sigma)\perp\gamma'(\sigma)$ and $\hat S(t)$ is given by the unique continuous determination of the complex square function such that in $x=r(t)+\sqrt{1-|\dot s(t)|^2}\gamma'(s(t))$ takes value 1 and its discontinuity set lies on $\Gamma(t)$. Roughly speaking, if we forget the term $\sqrt{1-|\dot s(t)|^2}$, the function \eqref{hatS} is the determination of $Im(\sqrt{y_1 + iy_2})$ in the orthonormal system with center $\gamma(s(t))$ and axes $\gamma'(s(t))$ and $n(s(t))$.

\begin{figure}[h]                                             
\begin{center}                                                
{\includegraphics[height=3.5truecm] {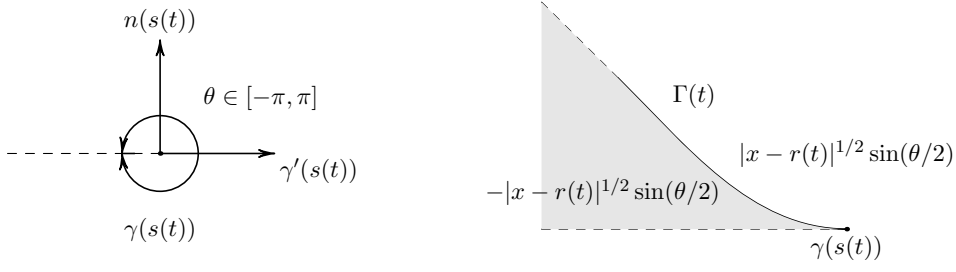}}                                                
\end{center}                                                  
\caption{{\it A possible choice of determination of $Im(\sqrt{y_1+iy_2})$, with $\Gamma(t)$ as discontinuity set.}}\label{S2} 
\end{figure}                                                  

For every $t\in[0,T]$ let $R(t)\in SO(2)^+$ be the matrix that rotates the orthonormal system with axes $\gamma'(s(t))$ and $n(s(t))$ in the one with axes $e_1$ and $e_2$. Thanks to our construction of $\Lambda$, and in particular to \eqref{Lambda}, the matrix $R(t)$ coincides with $D\Lambda(r(t))$ in $[t_0,t_1]$. By setting
\begin{align*}
&L(t):=\left( 
\begin{array}{ccc}
\frac{1}{\sqrt{1-|s(t)|^2}} & 0 
\\ 0  & 1 
\end{array}
\right)
\,,\quad\tilde \Phi(t,x):=L(t)R(t)(x-r(t))\,,\\
&\tilde\Omega(t):=\tilde\Phi(t,\Omega)\,,\quad\tilde\Gamma(t):=\tilde\Phi(t,\Gamma(t))\,,
\end{align*}
we may also write $\hat S(t,x)=\tilde S(t,\tilde\Phi(t,x))$, where $\tilde S(t,\cdot)$ is given by the continuous determination of $Im(\sqrt{y_1+iy_2})$ in $\tilde\Omega(t)\setminus\tilde\Gamma(t)$ such that in $y=(1,0)$ takes the value 1.

\begin{lemma}\label{lemregxw}
Under the same assumptions of Theorem~\eqref{thm-decomp}, the function $w(t):=S(\Phi(t))-\hat S(t)$ belongs to $H^2(\Omega\setminus\Gamma(t))$ for every $t\in[t_0,t_1]$.
\end{lemma}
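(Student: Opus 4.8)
The plan is to localize the statement around the moving tip $r(t)=\gamma(s(t))$ and there compare the two singular functions through their explicit expressions. Away from $r(t)$ the function $w(t,\cdot)$ is harmless: $\Phi(t,\cdot)$ is a $C^{1,1}$ diffeomorphism and $S$ is smooth off its slit, so $S(\Phi(t,\cdot))$ is $C^{1,1}$ on $(\Omega\setminus\Gamma(t))\setminus B_\delta(r(t))$ — reading it, where needed, as the continuous determination of the square root on the simply connected set $\Omega\setminus\Gamma(t)$ — while by \eqref{hatS} $\hat S(t,\cdot)$ is the imaginary part of a square root of an affine function of $x$ with invertible linear part, hence also $C^{1,1}$ there; thus $w(t,\cdot)\in H^2\big((\Omega\setminus\Gamma(t))\setminus B_\delta(r(t))\big)$ for every $\delta>0$, and only the slit disk $B_\delta(r(t))\setminus\Gamma(t)$ has to be analyzed.

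Near $r(t)$ I would use the expressions coming from \S\ref{ssec-cov}. Writing $\alpha(t):=\sqrt{1-|\dot s(t)|^2}$, by \eqref{nohatS} one has $S(\Phi(t,x))=\operatorname{Im}\sqrt{F(x)}$ with $F(x):=\big(\Lambda_1(x)-(s(t)-s(t_0))\big)/\alpha(t)+i\,\Lambda_2(x)$, while $\hat S(t,x)=\operatorname{Im}\sqrt{G(x)}$ with $G(x):=\big((x-r(t))\cdot\gamma'(s(t))\big)/\alpha(t)+i\,(x-r(t))\cdot n(s(t))$, the square roots being the continuous branches on $B_\delta(r(t))\setminus\Gamma(t)$ fixed by the normalizations in \eqref{def-S} and in the definition of $\hat S$. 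The key point is that $F$ and $G$ have the same affine part at $r(t)$: since $\Lambda(r(t))=(s(t)-s(t_0),0)$ and $D\Lambda(r(t))=R(t)$ (the rotation taking the frame $(\gamma'(s(t)),n(s(t)))$ onto $(e_1,e_2)$, as recalled before the statement), the remainder $b:=F-G$ is a $C^{2,1}$ complex-valued function with $b(r(t))=0$ and $\nabla b(r(t))=0$; writing $\varrho:=|x-r(t)|$, this gives $|b|=O(\varrho^2)$, $|\nabla b|=O(\varrho)$, $|\nabla^2 b|=O(1)$. Moreover $G$ is the complexification of a linear isomorphism, so $|G|\sim\varrho$; and, since $\Phi(t,\cdot)$ and the affine map $\tilde\Phi(t,\cdot)$ have the same differential $L(t)R(t)$ at $r(t)$ and the two determinations are normalized to take the value $+1$ where their arguments are positive reals, the branches are compatible, so that on $B_\delta(r(t))\setminus\Gamma(t)$ with $\delta$ small
\[
w(t,x)=\operatorname{Im}\Big(\sqrt{G(x)}\,\big[(1+\beta(x))^{1/2}-1\big]\Big),\qquad \beta:=\frac{b}{G},
\]
the principal determination of $(1+\zeta)^{1/2}$ being legitimate because $|\beta|=O(\varrho)\to0$.

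It then remains to estimate this product. Since $G$ is linear ($\nabla^2 G\equiv0$) and $b$ vanishes to second order with bounded Hessian, one gets $|\beta|=O(\varrho)$, $|\nabla\beta|=O(1)$, $|\nabla^2\beta|=O(\varrho^{-1})$, hence $\phi:=(1+\beta)^{1/2}-1$, a smooth function of $\beta$ vanishing at $\beta=0$, obeys the same bounds; on the other hand $|\sqrt G|\sim\varrho^{1/2}$, $|\nabla\sqrt G|=O(\varrho^{-1/2})$, $|\nabla^2\sqrt G|=O(\varrho^{-3/2})$. Differentiating $\sqrt G\,\phi$ twice, each of the three resulting terms is $O(\varrho^{-1/2})$:
\[
|\nabla^2 w|\ \lesssim\ |\nabla^2\sqrt G|\,|\phi|+|\nabla\sqrt G|\,|\nabla\phi|+|\sqrt G|\,|\nabla^2\phi|\ \lesssim\ \varrho^{-1/2}\in L^2(B_\delta(r(t))),
\]
and likewise $|\nabla w|\lesssim\varrho^{1/2}$, $|w|\lesssim\varrho^{3/2}$. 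Thus $w(t,\cdot)\in H^2(B_\delta(r(t))\setminus\Gamma(t))$, which together with the first step yields $w(t,\cdot)\in H^2(\Omega\setminus\Gamma(t))$.

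The main obstacle I anticipate is not the computation above but the bookkeeping around the branch cuts: one has to check that $S(\Phi(t,\cdot))$ and $\hat S(t,\cdot)$ generate the same germ of singular function at $r(t)$ — equivalently, that the slits $\Phi(t,\Gamma(t))$ and $\tilde\Phi(t,\Gamma(t))$ both reduce, to leading order, to $\{y_1\le0,\,y_2=0\}$ and that the chosen determinations carry the jump of the same sign across $\Gamma(t)$, so that the leading $\varrho^{1/2}$-singularities cancel in $w$ instead of reinforcing each other (here the identity $D\Lambda(r(t))=R(t)$ is essential). Once this is settled, the argument is a variant of the classical regularity splitting for corner and crack singularities (cf. \cite[Theorem 5.2.7]{Gr} and \cite{Laz-Toa}); the only mildly delicate feature of the estimate is that, although $\phi$ itself has a non-$L^2$ Hessian, multiplication by the vanishing weight $\sqrt G$ restores the $L^2$ integrability of the second derivatives of the product.
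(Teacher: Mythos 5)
Your proof is correct, and it takes a genuinely different route from the paper's. The paper expands $\partial^2_{ji} w$ directly as a sum $I_1(t)+I_2(t)$, bounds $I_1$ using $\nabla S(\Phi(t))\in L^2$ together with the boundedness of $D^2\Phi$, and handles $I_2$ by splitting it further and applying the mean value theorem to auxiliary determinations $S^{\pm}(t)$ of $\mathrm{Im}\sqrt{y_1+iy_2}$ chosen so that their branch cuts avoid the whole segment $[\Phi(t,x),\tilde\Phi(t,x)]$; this requires the geometric estimate $\dist([\Phi(t,x),\tilde\Phi(t,x)],0)\ge\tfrac12|x-r(t)|$, which they prove via a second-order Taylor expansion of $\Phi$. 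You instead exploit the complex structure from the start: you write both $S(\Phi(t,\cdot))$ and $\hat S(t,\cdot)$ as imaginary parts of holomorphic-in-argument square roots, observe that the two complexified arguments $F$ and $G$ agree to first order at $r(t)$ (which is exactly the content of $\Lambda(r(t))=(s(t)-s(t_0),0)$ and $D\Lambda(r(t))=R(t)$), and then factor $F=G(1+\beta)$ with $\beta=O(\varrho)$, reducing $w$ to $\mathrm{Im}\big(\sqrt G\,[(1+\beta)^{1/2}-1]\big)$. The pointwise bounds on $\beta$, $\phi$, and $\sqrt G$ that you list are correct (note that $\nabla^2 G\equiv0$ is what makes the $|\nabla^2\beta|=O(\varrho^{-1})$ and $|\nabla^2\sqrt G|=O(\varrho^{-3/2})$ bounds clean), and the product rule then gives exactly the paper's conclusion $|\nabla^2 w|\lesssim\varrho^{-1/2}$, hence $H^2$. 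What your factorization buys is that the auxiliary determinations $S^{\pm}$ and the explicit distance estimate $d(t,x)\ge\varrho/2$ are replaced by the single algebraic condition $|\beta|<1$, which you get for free from $|b|=O(\varrho^2)$ and $|G|\sim\varrho$; the branch bookkeeping, which you correctly flag as the only real subtlety, is handled in both proofs by the same observation that the two cuts coincide with $\Gamma(t)$ near $r(t)$ and both square roots are normalized positively on the forward tangent direction. The two approaches ultimately rest on the identical first-order matching of $\Phi$ and $\tilde\Phi$ at $r(t)$; yours packages it more compactly.
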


\begin{proof}
Let us fix $t\in[t_0,t_1]$.  The function $w(t)$ is of class {$C^2$} in $\Omega\setminus\Gamma(t)$ and it belongs to $H^1(\Omega\setminus\Gamma(t))\cap H^2((\Omega\setminus\Gamma(t))\setminus B_\varepsilon(r(t)))$ for every $\varepsilon>0$. Hence it remains to prove the $L^2$-integrability of its second spatial derivatives in $B_\varepsilon(r(t))$. For every $i,j\in\{1,2\}$ we have
\begin{align*}
\partial^2_{ji} w(t)&=\sum_h(\partial_hS(\Phi(t))\partial^2_{ji} \Phi_h(t)-\partial_h \tilde S(t,\tilde\Phi(t))\partial^2_{ji}\tilde\Phi_h(t))\\
&+\sum_{h,k}(\partial^2_{hk}S(\Phi(t))\partial_j \Phi_k(t)\partial_i \Phi_h(t)-\partial^2_{hk} \tilde S(t,\tilde\Phi(t))\partial_j\tilde\Phi_k(t)\partial_i\tilde\Phi_h(t))=:I_1(t)+I_2(t)\,,
\end{align*}
where $\Phi_i(t)$ and $\tilde\Phi_i(t)$ are the $i$--th components of $\Phi(t)$ and $\tilde\Phi(t)$, respectively.
	
Notice that $\nabla S(\Phi(t)), \nabla \tilde S(t,\tilde\Phi(t))\in L^2(\Omega\setminus\Gamma(t);\mathbb R^2)$, while $D^2 \Phi(t)$ and $D^2 \tilde\Phi(t)$ are uniformly bounded in $\Omega$. Therefore $I_1(t)\in L^2(\Omega\setminus\Gamma(t))$ and in particular there exists a positive constant $C$, independent of $t$, such that
\begin{equation*}
|I_1(t,x)|\le C|x-r(t)|^{-\frac{1}{2}}\quad\text{for every }x\in B_\varepsilon(r(t))\setminus\Gamma(t)\,,
\end{equation*}
provided that $\varepsilon>0$ is small enough.
	
As for $I_2(t)$, we estimate it from above as
\begin{equation}\label{vbe}
\begin{aligned}
|I_2(t)|&\le \sum_{h,k}|\partial^2_{hk}S(\Phi(t))-\partial^2_{hk} \tilde S(t,\tilde\Phi(t))||\partial_j\tilde\Phi_k(t)||\partial_i\tilde\Phi_h(t)|\\
&+\sum_{h,k}|\partial^2_{hk}S(\Phi(t))||\partial_j \Phi_k(t)\partial_i \Phi_k(t)-\partial_j\tilde\Phi_k(t)\partial_i\tilde\Phi_h(t)|\,.
\end{aligned}
\end{equation}
Let us study the right--hand side of \eqref{vbe}. By choosing $\varepsilon$ small enough and by using the definitions of $\Phi(t)$ and $\tilde\Phi(t)$, we deduce that for every $x\in B_\varepsilon(r(t))$
\begin{equation}\label{Sest1}
\begin{aligned}
|\partial_j \Phi_k(t,x)&\partial_i \Phi_h(t,x)-\partial_j\tilde\Phi_k(t,x)\partial_i\tilde\Phi_h(t,x)|\le \frac{2}{c_1^2} \Vert D\Lambda\Vert_\infty \Vert D^2\Lambda\Vert_\infty |x-r(t)|\,,
\end{aligned}
\end{equation}
since $\Vert D\Phi (t)\Vert_\infty$,$\Vert D\tilde\Phi(t)\Vert_\infty\le\Vert D\Lambda\Vert_\infty/c_1$ and 
\begin{equation*}
|D \Phi(t,x)-D \tilde\Phi(t,x)| \le \frac{1}{c_1} |D\Lambda(x)-R(t)| \le \frac{1}{c_1} \Vert D^2\Lambda\Vert_\infty |x-r(t)|\,.
\end{equation*} 
Moreover, the function $S$ satisfies $|\nabla^2 S(y)|\le M|y|^{-\frac{3}{2}}$ in $\Omega^{(4)}\setminus\Gamma^{(4)}(t_0)$ for a positive constant $M$, while $\Lambda$ is invertible and $|P(t,x)|\ge |x|$. This allows us to conclude that
\begin{equation}\label{Sest2}
|\partial^2_{hk}S(\Phi(t,x))|\le M\Vert D\Lambda^{-1}\Vert_\infty^{\frac{3}{2}}|x-r(t)|^{-\frac{3}{2}}\quad\text{for every }x\in B_\varepsilon(r(t))\setminus\Gamma(t)\,.
\end{equation}
	
Regarding the second term in the right--hand side of \eqref{vbe}, we fix $x\in B_\varepsilon(r(t))$ and we consider the segment $[\Phi(t,x),\tilde\Phi(t,x)]:=\{\lambda\Phi(t,x)+(1-\lambda)\tilde\Phi(t,x)\ :\ \lambda\in[0,1]\}$ and the function $d(t,x):=\dist([\Phi(t,x),\tilde\Phi(t,x)],0)$. We claim that we can choose $\varepsilon>0$ so small that
\begin{equation}\label{distest}
d(t,x)\ge \frac{1}{2}|x-r(t)|\quad\text{for every }x\in B_\varepsilon(r(t))\,.
\end{equation}
Indeed let $y\in [\Phi(t,x),\tilde\Phi(t,x)]$ be such that $|y|=d(t,x)$, then
\begin{equation*}
|\tilde\Phi(t,x)|\le |y|+|\tilde\Phi(t,x)-y|\le |y|+|\tilde\Phi(t,x)- \Phi(t,x)|\,.
\end{equation*}
Since $|P(t,x)|\ge |x|$ and $R(t)$ is a rotation, for $\varepsilon$ small we deduce that $|\tilde\Phi(t,x)|\ge |x-r(t)|$. On the other hand, by the Lagrange Theorem there exists $z=z(t,x)\in B_\varepsilon(r(t))$ such that
\begin{align*}
\Phi(t,x)&= \Phi(t,r(t))+ D\Phi(t,r(t))(x-r(t))+D^2\Phi(t,z)(x-r(t))\cdot(x-r(t))\\
&=\tilde\Phi(t,x)+D^2\Phi(t,z)(x-r(t))\cdot(x-r(t))\,.
\end{align*}
Hence we derive the estimate
\begin{equation}\label{Phiest}
|\Phi(t,x)- \tilde\Phi(t,x)|\le \frac{1}{c_1}\Vert D^2 \Lambda\Vert_\infty|x-r(t)|^2\quad\text{for every }x\in B_\varepsilon(r(t))\,,
\end{equation}
which implies 
\begin{align*}
d(t,x)&\ge |x-r(t)|-\frac{1}{c_1}\Vert D^2 \Lambda\Vert_\infty|x-r(t)|^2\quad\text{for every }x\in B_\varepsilon(r(t))\,.
\end{align*}
In particular we obtain \eqref{distest} by choosing $\varepsilon<c_1/(2\Vert D^2 \Lambda\Vert_\infty)$. Notice that $\varepsilon$ does not depend on $t\in[t_0,t_1]$.
	
Let us now fix $x\in B_\varepsilon(r(t))\setminus\Gamma(t)$. Thanks to our construction of $\Phi$ and $\tilde\Phi$, it is possible to find two other determinations $S^{\pm}(t)$ of $Im(\sqrt{y_1+iy_2})$ in $\mathbb R^2$ such that their discontinuity sets $\Gamma^{\pm}(t)$ do not intersect the segment $[\Phi(t,x),\tilde\Phi(t,x)]$, which is far way from 0. Moreover, we choose them in such a way that $S^+(t)$ is positive along $\{(x_1,0)\ :\ x_1\le 0\}$, while $S^-(t)$ is negative, and $S(\Phi(t,x))=S^{\pm}(t,\Phi(t,x))$ if and only if $\tilde S(t,\tilde\Phi(t,x))=S^{\pm}(t,\tilde\Phi(t,x))$; notice that $|\nabla^3S^{\pm}(t,y)|\le M|y|^{-\frac{5}{2}}$ for a positive constant $M$ and for every $y\in \mathbb R^2\setminus\Gamma^\pm(t)$. By using the Lagrange Theorem, \eqref{distest}, and \eqref{Phiest}, we deduce that 
\begin{equation}\label{Sest3}
\begin{aligned}
|\partial^2_{hk}S(\Phi(t,x))&-\partial^2_{hk} \tilde S(t,\tilde\Phi(t,x))|=|\partial^2_{hk} S^{\pm}(t,\Phi(t,x))-\partial^2_{hk} S^{\pm}(t,\tilde\Phi(t,x))|\\ 
&\le |\nabla^3S^{\pm}(t,z)||\Phi(t,x)-\tilde\Phi(t,x)|\le \frac{M}{c_1}\Vert D^2 \Lambda\Vert_\infty|d(t,x)|^{-\frac{5}{2}}|x-r(t)|^2\\
&\le \frac{4\sqrt{2}M}{c_1}\Vert D^2 \Lambda\Vert_\infty|x-r(t)|^{-\frac{1}{2}}\,,
\end{aligned}
\end{equation}
where $z=z(t,x)\in[\Phi(t,x),\tilde\Phi(t,x)]$. Hence, by combining \eqref{vbe} with \eqref{Sest1}, \eqref{Sest2}, and \eqref{Sest3}, we obtain the existence of a positive constant $C$ such that
\begin{equation*}
|I_2(t,x)|\le C|x-r(t)|^{-\frac{1}{2}}\quad\text{for every }x\in B_\varepsilon(r(t))\setminus\Gamma(t)\,.
\end{equation*}
In particular we get the following bound for $\nabla^2w$:
\begin{equation}\label{nabla2w}
|\nabla^2 w(t,x)|\le C|x-r(t)|^{-\frac{1}{2}}\quad\text{for every }x\in B_\varepsilon(r(t))\setminus\Gamma(t)\,,
\end{equation}
and consequently $w(t)\in H^2(\Omega\setminus\Gamma(t))$ for every $t\in [t_0,t_1]$. 
\end{proof}

Thanks to this lemma we derive the following decomposition result.

\begin{theorem}\label{valuek}
Under the same assumptions of Theorem~\ref{thm-decomp}, every solution $u$ to \eqref{eq-1}-\eqref{bc-u} can be decomposed as
\begin{equation}\label{decomp2}
u(t,x)=\hat u^{R}(t,x) + k(t) \hat S(t,x)\,,
\end{equation}
where $k\in C^2([0,T])$ and $\hat u^R(t)\in H^2(\Omega\setminus\Gamma(t))$ for every $t\in [0,T]$. In particular the function $k$ does not depend on our choice of $\Phi$, but only on $\Gamma$ and $s$.
\end{theorem}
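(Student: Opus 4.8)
The plan is to upgrade the local decomposition of Theorem~\ref{thm-decomp} to the intrinsic one by swapping the model singular function $S(\Phi(t,\cdot))$ for $\hat S(t,\cdot)$, using Lemma~\ref{lemregxw} to absorb the difference, and then to patch the local statements into a global one. Fix an interval $[t_0,t_1]$ with $0<t_1-t_0<\rho$. By Theorem~\ref{thm-decomp} every solution $u$ to \eqref{eq-1}--\eqref{bc-u} satisfies, on $[t_0,t_1]$, $u(t,x)=u^{R}(t,x)+k(t)\,\zeta(t,x)\,S(\Phi(t,x))$ with $k\in C^2([t_0,t_1])$ and $u^R(t)\in H^2(\Omega\setminus\Gamma(t))$. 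Setting $w(t):=S(\Phi(t))-\hat S(t)$, Lemma~\ref{lemregxw} gives $w(t)\in H^2(\Omega\setminus\Gamma(t))$ (with the uniform bound \eqref{nabla2w}), so that
$$
u(t,x)=\underbrace{u^R(t,x)+k(t)\,\zeta(t,x)\,w(t,x)-k(t)\,(1-\zeta(t,x))\,\hat S(t,x)}_{=:\ \hat u^R(t,x)}+k(t)\,\hat S(t,x).
$$
Here $k(t)\,\zeta(t)\,w(t)\in H^2(\Omega\setminus\Gamma(t))$ because $w(t)\in H^2$ and $\zeta(t,\cdot)$ is a smooth compactly supported cut-off, and $k(t)\,(1-\zeta(t))\,\hat S(t)\in H^2(\Omega\setminus\Gamma(t))$ because $1-\zeta(t,\cdot)$ vanishes near $r(t)$ while $\hat S(t,\cdot)$ is smooth away from $r(t)$; hence $\hat u^R(t)\in H^2(\Omega\setminus\Gamma(t))$. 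This is exactly \eqref{decomp2} locally in time, and the coefficient of $\hat S$ is the \emph{same} function $k$ that appeared in \eqref{decomp1}.

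Next I would record the uniqueness of this coefficient. In polar coordinates centred at $r(t)$ with axes $\gamma'(s(t)),n(s(t))$ the function $\hat S(t,\cdot)$ behaves like $\varrho^{1/2}\sin(\theta/2)$ near the tip, so $|\nabla^2\hat S(t,x)|\sim|x-r(t)|^{-3/2}$, which is not square integrable in two dimensions; therefore no nonzero multiple of $\hat S(t,\cdot)$ can lie in $H^2(\Omega\setminus\Gamma(t))$. Consequently, if $\hat u^R_1+k_1\hat S=\hat u^R_2+k_2\hat S$ with $\hat u^R_i(t)\in H^2(\Omega\setminus\Gamma(t))$, then $(k_1(t)-k_2(t))\hat S(t)\in H^2(\Omega\setminus\Gamma(t))$ forces $k_1(t)=k_2(t)$. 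Thus the coefficient of $\hat S$ in \eqref{decomp2} is determined by $u$ alone; since $\hat S$ is defined in \eqref{hatS} purely through $\Gamma$ and $s$, the function $k$ does not depend on the auxiliary diffeomorphism $\Phi$, nor on $\zeta$, nor on the choices made in Steps~1--4 of \S\ref{ssec-cov} — it depends only on $\Gamma$ and $s$ (through the solution $u$ they, together with the data, determine).

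Finally I would globalize: cover $[0,T]$ by finitely many overlapping intervals $[t_0^j,t_1^j]$ of length $<\rho$ (possible with a uniform $\rho$ by Remark~\ref{remrho}), obtain on each a decomposition $u=\hat u^{R,j}+k^j\hat S$ with $k^j\in C^2([t_0^j,t_1^j])$ and $\hat u^{R,j}(t)\in H^2(\Omega\setminus\Gamma(t))$, and use the uniqueness just proved to conclude $k^j\equiv k^{j+1}$ on each nondegenerate overlap; the $k^j$ then glue to a single $k\in C^2([0,T])$ and $\hat u^R:=u-k\hat S$ is globally defined with $\hat u^R(t)\in H^2(\Omega\setminus\Gamma(t))$ for all $t$. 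The one genuinely load-bearing ingredient is the $H^2$ estimate \eqref{nabla2w} on $w=S(\Phi(\cdot))-\hat S$, already supplied by Lemma~\ref{lemregxw} (its proof rests on $\Phi(t,\cdot)$ and $\tilde\Phi(t,\cdot)$ agreeing to first order at $r(t)$, so that the leading $|x-r(t)|^{-3/2}$ singularities of the two square-root functions cancel); once that is in hand, the $H^2$-absorption of the cut-off terms, the uniqueness, and the patching are all soft, so I expect no further obstacle.
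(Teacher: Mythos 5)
Your proposal is correct and follows essentially the same route as the paper's proof: you combine the local decomposition \eqref{decomp1} with Lemma~\ref{lemregxw} to absorb $w(t)=S(\Phi(t))-\hat S(t)$ into the regular part, obtain \eqref{decomp2} on $[t_0,t_1]$ with the same $k$, invoke the fact that no nonzero multiple of $\hat S(t)$ lies in $H^2(\Omega\setminus\Gamma(t))$ to establish uniqueness of the coefficient, and patch over a finite cover of $[0,T]$. Your variant with overlapping intervals and an explicit statement of the $|x-r(t)|^{-3/2}$ blow-up of $\nabla^2\hat S$ makes the $C^2$-gluing slightly more transparent than the paper's partition argument, but the substance is identical.
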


\begin{proof}
By combining the representation formula \eqref{decomp1} with Lemma~\ref{lemregxw}, we deduce the validity of the decomposition \eqref{decomp2} in $[t_0,t_1]$. Indeed we have 	
$$u(t)=(u^{R}(t)+k(t)\zeta(t)w(t)-k(t)(1-\zeta(t))\hat S(t)) + k(t)\hat S(t),\quad\text{in }[t_0,t_1],$$
being $w(t):=S(\Phi(t))-\hat S(t)$, and, by the previous result, $\hat u^R(t):=u^{R}(t)+k(t)\zeta(t)w(t) - k(t)(1-\zeta(t))\hat S(t)\in H^2(\Omega\setminus\Gamma(t))$.
	
We can now find a finite number of times $(t_i)_{i=1}^n$, with $0=t_0<t_1<\dots<t_{n-1}<t_n=T$ such that in every time interval $[t_{i-1},t_i]$  the solution $u$ to \eqref{eq-1}-\eqref{bc-u} is written as
\begin{equation}\notag
u(t,x)=\hat u^{R}_i(t,x) + k_i(t) \hat S(t,x)\,,
\end{equation}
with $k_i\in C^2([t_{i-1},t_i])$ and $\hat u^{R}_i(t,x)\in H^2(\Omega\setminus\Gamma(t))$. Define $k\colon[0,T]\to\mathbb R$ and $\hat u^{R}\colon [0,T] \to H^2(\Omega\setminus\Gamma)$ as $k(t):=k_i(t)$ and $\hat u^{R}:=\hat u_i^{R}$ in $[t_{i-1},t_i]$ for every $i=1,\dots,n$, respectively. The functions $k$ and $\hat u^R$ are well defined and do not depend on the particular choice of $(t_i)_{i=1}^n$. Indeed, if for some $t\in[0,T]$ we have

\begin{equation}\notag
u(t,x)=\hat u^{R}_1(t,x) + k_1(t) \hat S(t,x)=\hat u^{R}_2(t,x) + k_2(t) \hat S(t,x)\,,
\end{equation}
then we derive that
\begin{equation}\notag
\hat u^{R}_1(t)-\hat u^{R}_2(t)= (k_2(t)-k_1(t))\hat S(t)\,\quad\text{in }\Omega\setminus\Gamma(t)\,.
\end{equation}
Since the left--hand side belongs to $H^2(\Omega\setminus\Gamma(t))$ while $\hat S(t)$ is an element of $H^1(\Omega\setminus\Gamma(t))\setminus H^2(\Omega\setminus\Gamma(t))$, the only possibility to have such identity is that $k_1(t)=k_2(t)$ and $\hat u^{R}_1(t)=\hat u^{R}_2(t)$. Hence $k\in C^2([0,T])$ and $u$ satisfies the decomposition result \eqref{decomp2} in the whole $[0,T]$.
\end{proof}

We now want to recover the regularity in time for $\hat u^R$ and this is done in the following lemmas.

\begin{lemma}\label{lemregtw}
Under the same assumptions of Theorem~\ref{thm-decomp}, the function $\hat u^R$ introduced in \eqref{decomp2} is an element of $C^0([0,T];L^2(\Omega))$. Moreover, $\nabla \hat u^R\in C^0([0,T];L^2(\Omega;\mathbb R^2))$ and $\nabla^2\hat u^R\in C^0([0,T];L^2(\Omega;\mathbb R^{2\times 2}))$.
\end{lemma}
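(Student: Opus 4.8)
The plan is to transfer the time regularity of $u^R$ (established in Theorem~\ref{thm-decomp} locally in time, namely $u^R \in C^2([t_0,t_1];L^2)$, $\nabla u^R \in C^1([t_0,t_1];L^2)$, $\nabla^2 u^R \in C^0([t_0,t_1];L^2)$) to the function $\hat u^R$ using the explicit relation
$$
\hat u^R(t) = u^R(t) + k(t)\zeta(t)w(t) - k(t)(1-\zeta(t))\hat S(t),\qquad w(t):=S(\Phi(t))-\hat S(t),
$$
valid on each subinterval $[t_{i-1},t_i]$ of the finite partition of $[0,T]$ built in the proof of Theorem~\ref{valuek}. Since $C^0([0,T];\cdot)$ regularity is a local property that glues across a finite partition (the pieces agree at the endpoints by uniqueness of the decomposition, as shown in Theorem~\ref{valuek}), it suffices to prove the three claimed continuities on a single interval $[t_0,t_1]$, and then patch.

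First I would observe that $k \in C^2([0,T])$ is already known, so the scalar coefficients pose no problem, and the cut-off family $\zeta(t)$ is $C^2$ in time with spatially smooth profile. Then the key technical point is the time regularity, as a map into $L^2(\Omega)$, of the two singular building blocks $t\mapsto (1-\zeta(t))\hat S(t)$ and $t\mapsto \zeta(t)w(t)$, together with their first and second spatial derivatives. For the first term: away from the tip $r(t)$, the function $\hat S(t,x)$ from \eqref{hatS} depends on $t$ only through the smooth data $r(t)=\gamma(s(t))$, $\gamma'(s(t))$, $n(s(t))$, and $\sqrt{1-|\dot s(t)|^2}$, all of which are $C^2$ in $t$ by the standing $C^{3,1}$ assumptions on $\gamma$ and $s$; multiplying by $(1-\zeta(t))$, which vanishes near $r(t)$, removes the singularity, so $(1-\zeta(t))\hat S(t)$ and its spatial derivatives up to order two are $C^2$-in-time families of bounded functions on $\Omega$, hence certainly in $C^0([t_0,t_1];L^2)$ (in fact $C^2$). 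For the second term, Lemma~\ref{lemregxw} already gives the pointwise bound \eqref{nabla2w}, $|\nabla^2 w(t,x)|\le C|x-r(t)|^{-1/2}$ with $C$ independent of $t$, and analogous (better) bounds for $w$ and $\nabla w$; the uniform integrable majorant $|x-r(t)|^{-1/2}\in L^2(B_\varepsilon)$ lets me run a dominated-convergence argument to upgrade the pointwise-in-$x$ continuity in $t$ (which follows from the explicit formulas for $\Phi$, $\tilde\Phi$, $S$, $\tilde S$ and the continuity of $t\mapsto r(t)$) to continuity of $t\mapsto \zeta(t)\nabla^2 w(t)$ in $L^2(\Omega)$; I would make sure the estimates in the proof of Lemma~\ref{lemregxw} are genuinely locally uniform in $t$, which they are since $\varepsilon$ was chosen independent of $t\in[t_0,t_1]$ and all the constants ($\|D\Lambda\|_\infty$, $\|D^2\Lambda\|_\infty$, $c_1$, $M$) are too.

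The main obstacle I expect is precisely this last continuity-in-$L^2$ of the second-derivative of the singular part: one cannot differentiate the $|x-r(t)|^{-1/2}$-type singularity freely, so the argument must be by comparison (as in Lemma~\ref{lemregxw}, writing differences $\partial^2_{hk}S(\Phi(t,x))-\partial^2_{hk}S(\Phi(t',x))$ via intermediate determinations $S^\pm$ whose discontinuity sets avoid the relevant segments) combined with dominated convergence rather than by direct estimate of a time derivative; care is needed because the moving tip $r(t)$ shifts the location of the singularity, so the natural thing is to bound $\|\nabla^2(\zeta(t)w(t)) - \nabla^2(\zeta(t')w(t'))\|_{L^2}$ by splitting $\Omega$ into a small ball around $r(t)$ (controlled by the uniform majorant) and its complement (where everything is smooth and Lipschitz in $t$). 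Once $\nabla^2 \hat u^R \in C^0([t_0,t_1];L^2)$ is in hand, the statements for $\hat u^R$ and $\nabla \hat u^R$ follow a fortiori, being sums of a $C^2$-in-time $L^2$-valued part (from $u^R$) and terms with even milder singularities; gluing over the finite partition then yields the global $C^0([0,T];L^2)$ conclusions. As a final remark, this lemma is stated as a stepping stone towards the full $C^2$/$C^1$/$C^0$ time regularity of $\hat u^R$ (the analogue of the conclusion of Theorem~\ref{thm-decomp}), which presumably is obtained in a companion lemma by pushing the same comparison-plus-dominated-convergence scheme to the time derivatives $\partial_t w$, $\partial_t^2 w$, whose singularities are again controlled by the explicit structure of $\hat S$ and $\Phi$.
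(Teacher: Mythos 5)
Your proposal follows essentially the same route as the paper: reduce to the difference $w(t)=S(\Phi(t))-\hat S(t)$ on a short interval $[t_0,t_1]$, exploit the pointwise continuity in $(t,x)$ of $S\circ\Phi$, $\hat S=\tilde S\circ\tilde\Phi$, and their spatial derivatives, dominate by the $t$-uniform estimate $|\nabla^2 w(t,x)|\le C|x-r(t)|^{-1/2}$ from Lemma~\ref{lemregxw}, apply the (generalized) dominated convergence theorem in $L^2$, and patch over a finite partition of $[0,T]$. The only minor imprecision is the phrase ``a fortiori'' for the lower-order continuities: in the paper these are established by the same pointwise-continuity-plus-dominated-convergence mechanism with milder majorants, not as a logical consequence of the second-order statement, but this is exactly what you mean and does not affect the argument.
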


\begin{proof}
We start by proving that the function $w(t):=S(\Phi(t))-\hat S(t)$, already introduced in Lemma \ref{lemregxw}, satisfies the regularity properties of the thesis in $[t_0,t_1]$.
	
First, the function $S\circ \Phi$ belongs to $C^0([t_0,t_1];L^2(\Omega))$ in view of the fact that $S\in {C^\infty(\Omega^{(4)}\setminus\Gamma^{(4)})}\cap L^2(\Omega^{(4)})$ and that the diffeomorphism $\Phi$ is continuous in $[t_0,t_1]\times\overline \Omega$. We also claim that $\hat S=\tilde S\circ\tilde\Phi \in C^0([t_0,t_1]\times(\Omega\setminus\Gamma))\cap L^\infty((t_0,t_1)\times\Omega)$. Indeed let $(t^*,x^*)\in [t_0,t_1]\times(\Omega\setminus\Gamma)$ and let $(t_h,x_h)_{h\in\mathbb N}\subset[t_0,t_1]\times(\Omega\setminus\Gamma)$ be a sequence of points converging to $(t^*,x^*)$. Since $\tilde\Phi(t_h,x_h)\to\tilde\Phi(t^*,x^*)\in\tilde\Omega(t^*)\setminus\tilde\Gamma(t^*)$ as $h\to+\infty$, there exists $\bar h\in\mathbb N$ such that 
\begin{equation*}
\tilde S(t_h,\tilde\Phi(t_h,x_h))=\tilde S(t^*,\tilde\Phi(t_h,x_h))\quad\text{for every }h>\bar h\,.
\end{equation*}
This allows us to conclude that $\hat S(t_h,x_h)\to \hat S(t^*,x^*)$ as $h\to+\infty$, since the function $\tilde S(t^*)$ is continuous in $\tilde\Omega(t^*)\setminus\tilde\Gamma(t^*)$. Furthermore, $|\hat S(t,x)|\le M |\tilde\Phi(t,x)|^{\frac{1}{2}}$ for every $x\in\Omega\setminus\Gamma$ and $t\in[t_0,t_1]$ for a positive constant $M$, which gives us that $\hat S$ is uniformly bounded in $\Omega\setminus\Gamma$. We hence derive the claim, which implies that $\hat S \in C^0([t_0,t_1];L^2(\Omega))$ by the dominated convergence theorem.
	
Arguing as before, we can easily deduce that $\nabla (S\circ\Phi)\in C^0([t_0,t_1];L^2(\Omega;\mathbb R^2))$, while $\nabla \hat S=D\tilde\Phi^T(\nabla\tilde S)\circ\tilde\Phi\in C^0([t_0,t_1]\times(\Omega\setminus\Gamma);\mathbb R^2)$. By using also the estimate $|\nabla \tilde S(t,\tilde\Phi(t,x))|\le M|\tilde\Phi(t,x)|^{-\frac{1}{2}}$, which holds in $\Omega\setminus\Gamma$ for every $t\in[t_0,t_1]$, and the dominated converge theorem, we conclude that $\nabla \hat S\in C^0([t_0,t_1];L^2(\Omega;\mathbb R^2))$.
	
Finally, also the function $\nabla^2w$ is continuous in $[t_0,t_1]\times(\Omega\setminus\Gamma)$. Let us now fix $t^*\in[t_0,t_1]$ and let $(t_h)_{h\in\mathbb N}$ be a sequence of points in $[t_0,t_1]$ such that $t_h\to t^*$ as $h\to+\infty$. Thanks to the estimate \eqref{nabla2w}, we can find $\bar h\in\mathbb N$ and $\varepsilon>0$ such that
\begin{equation*}
|\nabla^2 w(t_h,x)|\le C|x-r(t_h)|^{-\frac{1}{2}}\quad\text{for every }x\in B_\varepsilon(r(t_h))\setminus\Gamma\text{ and }h>\bar h\,,
\end{equation*}
with $C$ independent of $h$. Here we have used the fact that the constant in \eqref{nabla2w} can be chosen uniform in time. Furthermore, the functions $\nabla^2 w(t_h)$ are uniformly bounded with respect to $h$ outside the ball $B_\varepsilon(r(t_h))$. Hence, by applying the generalized dominated convergence theorem, we deduce that $\nabla^2 w(t_h)\to \nabla^2 w(t^*)$ in $L^2(\Omega;\mathbb R^{2\times 2})$, which implies that $\nabla w^2\in C^0([t_0,t_1];L^2(\Omega;\mathbb R^{2\times 2}))$.
	
Combining the regularity of $w$ with the definition of $\hat u^R$, it is easy to see that $\hat u^R$ satisfies the thesis in $[t_0,t_1]$, and consequently in the whole $[0,T]$ by the arbitrariness of $[t_0,t_1]\subset[0,T]$.
\end{proof}

\begin{lemma}\label{lemregtw2}
Under the assumptions of Theorem~\ref{thm-decomp}, the function $\hat u^R$ introduced in \eqref{decomp2} is an element of $C^2([0,T];L^2(\Omega))$, moreover $\nabla\hat u^R\in C^1([0,T];L^2(\Omega;\mathbb R^2))$.
\end{lemma}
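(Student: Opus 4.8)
The plan is to localize on a small interval $[t_0,t_1]$ and to reduce everything to the regularity in time of the single function $w(t):=S(\Phi(t))-\hat S(t)$ from Lemma~\ref{lemregxw}. Recall from the proof of Theorem~\ref{valuek} that on $[t_0,t_1]$
\[
\hat u^R(t)=u^R(t)+k(t)\,\zeta(t)\,w(t)-k(t)\,(1-\zeta(t))\,\hat S(t)\,,
\]
where, by Theorem~\ref{thm-decomp}, $u^R\in C^2([t_0,t_1];L^2(\Omega))$ and $\nabla u^R\in C^1([t_0,t_1];L^2(\Omega;\mathbb R^2))$, while $k\in C^2([t_0,t_1])$ and $\zeta$ is a $C^2$ (in time) family of cut--off functions, smooth and compactly supported in space. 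Since $1-\zeta(t)$ vanishes on a fixed neighborhood of $r(t)$, the term $(1-\zeta(t))\hat S(t)$ is supported where $\hat S=\tilde S\circ\tilde\Phi$ is, for $t$ close to any instant, a genuine $C^2$ function of $(t,x)$ with bounded derivatives (the singularity of $\hat S$ sits at the tip), so that $k(1-\zeta)\hat S$ and its spatial gradient belong to $C^2([t_0,t_1];L^2(\Omega))$. By the Leibniz rule it then suffices to show
\[
w\in C^2([t_0,t_1];L^2(\Omega))\,,\qquad \nabla w\in C^1([t_0,t_1];L^2(\Omega;\mathbb R^2))\,,
\]
after which the statement follows on $[t_0,t_1]$, and on $[0,T]$ by the finite-cover patching argument of Theorem~\ref{valuek} together with the well-posedness of the decomposition established there.

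The heart of the matter is to obtain, near the moving tip, pointwise bounds for the time derivatives of $w$. I would run the scheme of Lemma~\ref{lemregxw} with time differentiations interspersed. Fix $t^*\in[t_0,t_1]$; exactly as in that proof, for $x\in B_\varepsilon(r(t))$ and $t$ near $t^*$ one may freeze branch(es) $F$ of $\mathrm{Im}(\sqrt{y_1+iy_2})$ so that $w(t,x)=F(\Phi(t,x))-F(\tilde\Phi(t,x))$, with $|\nabla^j F(y)|\le M|y|^{-(2j-1)/2}$ for $j=1,2,3$. The two diffeomorphisms share the same $1$-jet at the moving tip,
\[
\Phi(t,r(t))=\tilde\Phi(t,r(t))=0\,,\qquad D\Phi(t,r(t))=D\tilde\Phi(t,r(t))=L(t)R(t)\,,
\]
whence, uniformly in $t\in[t_0,t_1]$, $|\Phi-\tilde\Phi|\le C|x-r(t)|^2$, $|D\Phi-D\tilde\Phi|\le C|x-r(t)|$, and, differentiating the two identities in $t$, also $|\dot\Phi-\dot{\tilde\Phi}|\le C|x-r(t)|$; on the other hand $|\ddot\Phi-\ddot{\tilde\Phi}|$ and $|D\dot\Phi-D\dot{\tilde\Phi}|$ are only bounded. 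Since $\gamma,s\in C^{3,1}$, the maps $\Phi$, $\tilde\Phi$ and their $x$-derivatives up to second order are of class $C^2$ in time with space-time derivatives bounded on $[t_0,t_1]\times\overline\Omega$. Expanding $\partial_t w$, $\partial_t^2 w$ and $\partial_t\nabla w=\nabla\partial_t w$ by the chain and Leibniz rules and estimating every summand by Taylor/Lagrange as in Lemma~\ref{lemregxw} — each time pairing a factor that vanishes to the right order at $r(t)$ with the matching power of $|y|^{-1}$ coming from $\nabla^jF$ — one finds that each term carries at worst a factor $|x-r(t)|^{-1/2}$, so that
\[
|\partial_t w(t,x)|+|\partial_t^2 w(t,x)|+|\nabla\partial_t w(t,x)|\le C|x-r(t)|^{-1/2}\,,\qquad x\in B_\varepsilon(r(t))\setminus\Gamma\,,
\]
with $C$ independent of $t$; away from the tip these quantities are uniformly bounded. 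As $|x-r(t)|^{-1}$ is integrable on $B_\varepsilon$ in $\mathbb R^2$, this yields $\partial_t w(t),\partial_t^2 w(t),\nabla\partial_t w(t)\in L^2(\Omega)$ for every $t$.

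Continuity in time of these maps is then obtained exactly as in Lemma~\ref{lemregtw}: they are continuous in $(t,x)$ on $[t_0,t_1]\times(\Omega\setminus\Gamma)$, and the uniform-in-$t$ domination above near the (continuously moving) tip, together with uniform boundedness elsewhere, allows the generalized dominated convergence theorem to give $\partial_t w,\partial_t^2 w,\partial_t\nabla w\in C^0([t_0,t_1];L^2)$. Combined with $w,\nabla w\in C^0([t_0,t_1];L^2)$ from Lemma~\ref{lemregtw}, this gives $w\in C^2([t_0,t_1];L^2)$ and $\nabla w\in C^1([t_0,t_1];L^2)$, hence the lemma via the reduction above.

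The main obstacle is the second step. A priori $\partial_t(S\circ\Phi)$ does \emph{not} have an $L^2$ gradient, since the component $D\Phi\,\dot\Psi$ of $\dot\Phi$ is not of the special form that would offset the $|y|^{-3/2}$ blow-up of $\nabla^2 S$ (compare Remark~\ref{rem-initcond}); it is the subtraction of $\partial_t\hat S$ that restores integrability, and this works precisely because $\Phi(t,\cdot)$ and $\tilde\Phi(t,\cdot)$ have the same $1$-jet at $r(t)$ for \emph{every} $t$, a property that survives one time differentiation up to an $O(|x-r(t)|)$ error — just enough to absorb one extra order of the singularity. Arranging the Taylor remainders so that this cancellation is manifest, with all constants uniform in $t$, is the delicate part; the rest is the machinery already set up in Lemmas~\ref{lemregxw} and~\ref{lemregtw}.
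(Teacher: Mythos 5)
Your proof is correct and follows essentially the same strategy as the paper's: reduce to the time regularity of $w(t)=S(\Phi(t))-\hat S(t)$ on a short interval, compute $\dot w$, $\ddot w$, $\nabla\dot w$ by the chain rule (freezing the branch of the square root locally as in Lemma~\ref{lemregxw}), use the shared $1$-jet of $\Phi$ and $\tilde\Phi$ at $r(t)$ to get the estimates $|\Phi-\tilde\Phi|\le C|x-r(t)|^2$, $|D\Phi-D\tilde\Phi|\le C|x-r(t)|$, $|\dot\Phi-\dot{\tilde\Phi}|\le C|x-r(t)|$, which pair with the blow-up of $\nabla^j S$ to give pointwise bounds of order $|x-r(t)|^{-1/2}$, and conclude by (generalized) dominated convergence. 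Your remark identifying the cancellation between $\partial_t(S\circ\Phi)$ and $\partial_t\hat S$ — neither of which is separately in $L^2$ in gradient — as the crux, and tracing it to the $t$-uniform matching of $1$-jets, is exactly the mechanism underlying the paper's estimates, if anything stated more explicitly than in the published proof.
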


\begin{proof}
As before, it is enough to prove the validity of the thesis for the difference function $w(t):=S(\Phi(t))-\hat S(t)$, in the time interval $[t_0,t_1]$.
	
For every $x\in\Omega\setminus\Gamma$ the function $t\mapsto w(t,x)$ is differentiable in $[t_0,t_1]$ and 
\begin{equation*}
\dot w(t,x)=\frac{\de}{\de t}w(t,x)=\nabla S(\Phi(t,x))\cdot\dot\Phi(t,x)-\nabla \tilde S(t,\tilde \Phi(t,x))\cdot\dot{\tilde\Phi}(t,x)\,.
\end{equation*}
Indeed, fixed $(t^*,x^*)\in[t_0,t_1]\times(\Omega\setminus\Gamma)$, we can find $\bar h>0$ such that for every $|h|\le \bar h$
\begin{equation*}
\frac{\tilde S(t^*+h,\tilde\Phi(t^*+h,x^*))-\tilde S(t^*,\tilde\Phi(t^*,x^*))}{h}=\frac{\tilde S(t^*,\tilde\Phi(t^*+h,x^*))-\tilde S(t^*,\tilde\Phi(t^*,x^*))}{h}\,,
\end{equation*}
thanks to the fact that $\tilde\Phi(t^*+h,x^*)\to \tilde\Phi(t^*,x^*)\in \tilde\Omega(t^*)\setminus\tilde\Gamma(t^*)$ for every $x^*\in\Omega\setminus\Gamma$ as $h\to 0$. In particular $[\tilde S(t^*+h,\tilde\Phi(t^*+h,x^*))-\tilde S(t^*,\tilde\Phi(t^*,x^*))]/h\to \nabla \tilde S(t^*,\tilde \Phi(t^*,x^*))\cdot\dot{\tilde\Phi}(t^*,x^*)$, since $\tilde S(t^*)\in C^\infty(\tilde\Omega(t^*)\setminus\tilde\Gamma(t^*))$. Hence for every $(t,x)\in[t_0,t_1]\times(\Omega\setminus\Gamma)$ and $h\in\mathbb R$ such that $t+h\in[t_0,t_1]$ we may write
\begin{equation*}
\frac{w(t+h,x)-w(t,x)}{h}=\frac{1}{h}\int_t^{t+h}\dot w(\tau,x)\,\de\tau\,.
\end{equation*}
Arguing as in the proof of the previous lemma we deduce that $\dot w\in C^0([t_0,t_1];L^2(\Omega))$. Therefore we obtain that as $h\to 0$
\begin{equation*}
\frac{1}{h}\int_t^{t+h}\dot w(\tau)\,\de \tau\to \dot w(t)\quad\text{in }L^2(\Omega)\text{ for every }t\in[t_0,t_1]\,, 
\end{equation*}
and consequently $\frac{w(t+h)-w(t)}{h}\to \dot w(t)$ in $L^2(\Omega)$.
	
Similarly, for every $x\in\Omega\setminus\Gamma$ the map $t\mapsto\dot w(t,x)$ is differentiable in $[t_0,t_1]$ with derivative
\begin{align*}
\ddot w(t,x)=\frac{\de}{\de t}\dot w(t,x)&=\nabla S(\Phi(t,x))\cdot\ddot\Phi(t,x)-\nabla \tilde S(t,\tilde\Phi(t,x))\cdot\ddot{\tilde\Phi}(t,x)\\
&+\nabla^2 S(\Phi(t,x))\cdot[\dot\Phi(t,x)\otimes\dot\Phi(t,x)-\dot{\tilde\Phi}(t,x)\otimes\dot{\tilde\Phi}(t,x)]\\
&+[\nabla^2 S(\Phi(t,x))-\nabla^2 \tilde S(t,\tilde\Phi(t,x))]\dot{\tilde\Phi}(t,x)\otimes\dot{\tilde\Phi}(t,x)\,.
\end{align*}
Notice that we may find $\varepsilon>0$ so small that $|\dot\Phi(t,x)-\dot{\tilde\Phi}(t,x)|\le C|x-r(t)|$ in $B_\varepsilon(r(t))$ for every $t\in[t_0,t_1]$ and for a positive constant $C$. Therefore, proceeding as in the proof of Lemma~\ref{lemregxw}, we obtain that $\ddot w(t)\in L^2(\Omega)$ for every $t\in[t_0,t_1]$ with
\begin{equation*}
\left|\ddot w(t,x)\right|\le C|x-r(t)|^{-\frac{1}{2}}\quad\text{for every }x\in B_\varepsilon(r(t))\setminus\Gamma(t)\,.
\end{equation*}
In particular, arguing as in Lemma~\ref{lemregtw}, this uniform estimate implies that $\ddot w\in C^0([t_0,t_1];L^2(\Omega))$. We can hence repeat the same procedure adopted before for $\dot w$ to conclude that as $h\to 0$
\begin{equation*}
\frac{\dot w(t+h)-\dot w(t)}{h}\to\ddot w(t)\quad\text{in }L^2(\Omega)\text{ for every }t\in[t_0,t_1]\,,
\end{equation*}
which gives that $w\in C^2([t_0,t_1];L^2(\Omega))$.
	
Finally, also the function $t\mapsto\nabla w(t,x)$ is differentiable in $[t_0,t_1]$ for every $x\in\Omega\setminus\Gamma$ and
\begin{align*}
\nabla \dot w(t,x)=\frac{\de}{\de t}\nabla w(t,x)&=D \dot \Phi^T(t,x)\nabla S(\Phi(t,x))-D \dot{\tilde\Phi}^T(t,x)\nabla \tilde S(t,\tilde\Phi(t,x))\\
&+[D\Phi^T(t,x)-D\tilde\Phi^T(t,x)]\nabla^2 S(\Phi(t,x))\dot\Phi(t,x)\\
&+D\tilde\Phi^T(t,x)\nabla^2 S(\Phi(t,x))[\dot\Phi(t,x)-\dot{\tilde\Phi}(t,x)]\\
&+D\tilde\Phi^T(t,x)[\nabla^2 S(\Phi(t,x))-\nabla^2 \tilde S(t,\tilde\Phi(t,x))]\dot{\tilde\Phi}(t,x)\,.
\end{align*}
Moreover there exists $\varepsilon>0$ so small that for every $t\in[t_0,t_1]$ 
\begin{equation*}
\left|\nabla \dot w(t,x)\right|\le C|x-r(t)|^{-\frac{1}{2}}\quad\text{for every }x\in B_\varepsilon(r(t))\setminus\Gamma(t)\,,
\end{equation*}
which implies the continuity of the map $t\mapsto \nabla\dot w(t)$ from $[t_0,t_1]$ to $L^2(\Omega;\mathbb R^2)$. Therefore we get that as $h\to 0$
\begin{equation*}
\frac{\nabla w(t+h)-\nabla w(t)}{h}\to\nabla \dot w(t)\quad\text{in }L^2(\Omega;\mathbb R^2)\text{ for every }t\in[t_0,t_1]\,,
\end{equation*}
and in particular $\nabla w\in C^1([t_0,t_1];L^2(\Omega;\mathbb R^2))$.
\end{proof}

\begin{remark}\label{gencase}
When $A\neq I$ all the previous result are still true if we define
\begin{equation}\label{sing2}
\hat S(t,x) :=Im\left(\sqrt{\frac{A^{-1}(r(t))\left(x-r(t)\right)\cdot\gamma'(s(t))}{c_{A,\gamma'}(t)\sqrt{1-|c_{A,\gamma'}(t)|^2|\dot s(t)|^2}}+i\frac{\left(x-r(t)\right)\cdot n(s(t))}{c_{A,n}(t)}}\right)\,,
\end{equation}
where {$c_{A,\gamma'}(t):= |A^{-1/2}(r(t))\gamma'(s(t))|$, $c_{A,n}(t):= |A^{1/2}(r(t))n(s(t))|$, with $A^{1/2}$ and $A^{-1/2}$ the square root matrices of $A$ and $A^{-1}$}, respectively, and where $\hat S(t)$ is given by the unique continuous determination of the complex square function such that in $x=r(t)+\sqrt{1/|c_{A,\gamma'}(t)|^2-|\dot s(t)|^2}\gamma'(s(t))$ takes the value 1 and its discontinuity set lies on $\Gamma(t)$. Indeed, by exploiting the following identities in $[t_0,t_1]$
\begin{align*}
&(\gamma^{(1)})'(s^{(1)}(t))=\frac{A^{-1/2}(r(t))\gamma'(s(t))}{|A^{-1/2}(r(t))\gamma'(s(t))|}\,,\quad n^{(1)}(s^{(1)}(t))=\frac{A^{1/2}(r(t))n(s(t))}{|A^{1/2}(r(t))n(s(t))|}\,\\
&\dot s^{(1)}(t)=|A^{-1/2}(r(t))\gamma'(s(t))|\dot s(t)\,,\quad D\chi(r(t))=A^{-1/2}(r(t))\,,
\end{align*}
{where $(\gamma^{(1)})'$ and $n^{(1)}$ are, respectively, the tangent and the normal unit vectors to the curve $\Gamma^{(1)}$ in the point $\gamma^{(1)}(s^{(1)}(t))$,} the function \eqref{sing2} can be rewritten as
\begin{equation*}
Im\left(\sqrt{\frac{D\chi(r(t))\left(x-r(t)\right)\cdot(\gamma^{(1)})'(s^{(1)}(t))}{\sqrt{1-|\dot s^{(1)}(t)|^2} }+iD\chi(r(t))\left(x-r(t)\right)\cdot n^{(1)}(s^{(1)}(t))}\right)\,.
\end{equation*}
In this case it is enough to set $\tilde \Phi(t,x):=L(t)R(t)D\chi(r(t))(x-r(t))$, where $L$ and $R$ are constructed starting from $\gamma^{(1)}$ and $s^{(1)}$, and we can proceed again as in Lemmas~\eqref{lemregxw}, \eqref{lemregtw}, and \eqref{lemregtw2}, thanks to the fact that for every $t\in[t_0,t_1]$ and $x\in B_\varepsilon(r(t))$
\begin{align*}
&|\Phi(t,x)-\tilde\Phi(t,x)|\le C|x-r(t)|^2\,,\ |D\Phi(t,x)-D\tilde\Phi(t,x)|\le C|x-r(t)|\,,\\
&|\dot\Phi(t,x)-\dot{\tilde\Phi}(t,x)|\le C|x-r(t)|\,.
\end{align*}
We hence obtain the decomposition result \eqref{decomp2} with singular part \eqref{sing2}. As a byproduct, arguing as in Theorem~\ref{valuek}, we derive that the values of $k$ do not depend on the particular construction of $\Phi$, but only on $A$, $\Gamma$, and $s$.
	
We point out that the condition $|\dot s(t)|^2<1/|c_{A,\gamma'}(t)|^2$, which we need in order to define $\hat S$, is implied by \eqref{ipotesi-s}. Indeed
\begin{equation*}
1=D\chi(r(t))A(r(t))D\chi^T(r(t))\gamma'(s(t))\cdot \gamma'(s(t))\ge c_0|A(r(t))^{-1/2}\gamma'(s(t))|^2=c_0|c_{A,\gamma'}(t)|^2.
\end{equation*}
\end{remark}

\section{The energy-dissipation balance}\label{sec-err}

In this section we derive formula \eqref{energia} for the energy 
$$
\mathcal E(t):= \frac{1}{2} \| \dot{u}(t)\|_{L^2(\Omega)}^2 + \frac{1}{2} \| \nabla u(t) \|_{L^2(\Omega;\mathbb R^2)}^2\,,
$$ 
associated to $u$, solution to \eqref{eq-1}-\eqref{bc-u} with initial conditions $u(0)=u^0$, $\dot{u}(0)=u^1$.

The computation is divided into three steps: first, in Proposition \ref{enbalrect} we consider straight cracks when $A$ is the identity matrix; then, in Theorem \ref{thmeb} we adapt the techniques to curved fractures; finally, in Remark  \ref{genenba} we generalize the former results to $A\neq I$.
To this aim, some preliminaries are in order: first, in Remark \ref{compder} we compute the partial derivatives of $u$ in a more convenient way, then in Lemmas \ref{fondlem1} and \ref{tracelem} we provide two key results, based on Geometric Measure Theory. Once this is done, we deduce formula \eqref{edb} in the time interval $[t_0,t_1]$ where the decomposition \eqref{decomp1} holds. 

For brevity of notation, in this section we consider $[t_0,t_1]=[0,1]$. All the results can be easily extended to the general case.
The global result in $[0,T]$ easily follows by iterating the procedure a finite number of steps, and using both the additivity of the integrals and the fact that $k$ depends only on $A$, $\Gamma$, and $s$ (see Theorem~\ref{valuek} and Remark~\ref{gencase}).

\begin{remark}\label{compder}
Let us focus our attention on a fracture which is straight in a neighborhood of the tip. Without loss of generality, we may fix the origin so that for every $t\in[0,1]$
$$
\Gamma(t) \setminus \Gamma(0) = \{ (x_1,x_2) \in \mathbb{R}^2 \ : \ 0 < x_1 \leq s(t)-s(0), \ x_2 = 0 \}\,.
$$
The diffeomorphisms $\chi$ and $\Lambda$ introduced in \S \ref{ssec-cov} can be both taken equal to the identity, so that, in a neighborhood of the origin,
the diffeomorphisms $\Phi(t)$ defined in \eqref{def-Phi} simply read
$$
\Phi(t,x)= \left ( \frac{x_1- (s(t)-s(0))}{\sqrt{1 - |\dot{s}(t)|}}, x_2\right)\,.
$$
Accordingly, the decomposition result in Theorem \ref{thm-decomp} states that the solution $u$ to the wave equation \eqref{eq-1}-\eqref{bc-u} can be decomposed as
\begin{equation}
\label{decform}
u(t,x) = u^R(t,x) + k(t) \zeta(t,x) \overline{S}(t,x)\,,
\end{equation}
where, for brevity, we have set $\overline{S}(t,x) \coloneqq S(\Phi(t,x))$.
We recall that $u^R \in C^2([0,1]; L^2(\Omega))$, $\nabla u \in C^1([0,1]; L^2(\Omega; \mathbb{R}^2))$, $\nabla^2 u \in C^0([0,1]; L^2(\Omega; \mathbb{R}^{2 \times 2}))$, $u^R(t) \in H^2(\Omega \setminus \Gamma(t))$ for every $t \in [0,1]$, $k \in C^2([0,1])$, $\zeta \in C^1([0,1] \times \Omega)$, and $S(x) = \frac{x_2}{\sqrt{2}\sqrt{|x| + x_1}}$.

Let us now compute the partial derivatives of $u$. Since
\begin{eqnarray*}
&& \nabla S(x) = \frac{1}{2\sqrt{2}|x|} \bigg( \frac{-x_2}{\sqrt{|x| + x_1}}\,, \sqrt{|x| + x_1} \bigg)\,,
\\
&& \partial^2_{11}S(x) = \frac{2x_1x_2 + x_2|x|}{4\sqrt{2}|x|^3\sqrt{|x| + x_1}}\,, \ \ \partial^2_{22}S(x)= -\frac{2x_1x_2 + x_2|x|}{4\sqrt{2} |x|^3\sqrt{|x| + x_1}}\,,
\\
&& \partial^2_{12} S(x) = \partial^2_{21} S(x) = \frac{\sqrt{|x| + x_1} ( |x| -2 x_1)}{ 4\sqrt{2} |x|^3}\,,
\end{eqnarray*}
we get
\begin{eqnarray}
&& \nabla u(t,x)= \nabla u^R(t,x) + k(t) \nabla \zeta(t,x)  \overline{S}(t,x) + k(t) \zeta(t,x) \nabla \overline{S}(t,x)\,,\label{expl1}
\\
&& \dot{u}(t,x) = \dot{u}^R(t,x) + \dot{k}(t)\zeta(t,x) \overline{S}(t,x) +  k(t)  \dot{\zeta}(t,x) \overline{S}(t,x) + k(t) \zeta(t,x) \dot{\overline{S}}(t,x)\,.\label{expl2}
\end{eqnarray}
We claim that 
$$
\dot{u}(t) \nabla u(t) - k^2(t) \zeta^2(t) \dot{\overline{S}}(t) \nabla \overline{S}(t) \in W^{1,1}(\Omega \setminus \Gamma(t);\mathbb R^2)\,,
$$
 for every $t \in [0,1]$. 

In fact $\nabla u^R(t,x)$, $\zeta(t,x) \overline{S}(t,x)$, $ \dot{u}^R(t,x)$, $\zeta(t,x) \overline{S}(t,x)$, and $k(t)  \dot{\zeta}(t,x) \overline{S}(t,x)$ are functions in $W^{1,2}(\Omega \setminus \Gamma(t))$ for every $t \in [0,1]$; by the Sobolev embeddings theorem we deduce that each of the previous functions belongs to $L^p(\Omega \setminus \Gamma(t))$ for every $p \geq 1$; using also the explicit form of $\overline{S}(t,x)$ and $\dot{\overline{S}}(t,x)$, one can also check that both of these functions are elements of $W^{1,4/3}(\Omega \setminus \Gamma(t))$. Having this in mind, we can easily conclude that the products of each term appearing in \eqref{expl1} with each term appearing in \eqref{expl2}, except $k^2(t) \zeta^2(t) \dot{\overline{S}}(t) \nabla \overline{S}(t)$, are functions in $W^{1,1}(\Omega \setminus \Gamma(t);\mathbb R^2)$ for every $t \in [0,1]$.  
\end{remark}

\begin{lemma}
\label{fondlem1}
Let $a,b \in \mathbb{R}$ with $a < 0$ and $b>0$ and define $H^+ := \{ (x_1,x_2) \in \mathbb{R}^2 \ : \ x_2 \geq 0 \}$ to be the upper half plane in $\mathbb{R}^2$. Let $g \colon H^+ \to \mathbb{R}$ be bounded, continuous at the origin, and call $\omega$ a modulus of continuity for $g$ at $x = 0$. Then
\begin{equation}
\label{fondlem}
\bigg|\frac{1}{\epsilon}\int_0^\epsilon\bigg(\int_a^b g(x_1,x_2) \frac{x_2}{x_1^2+x_2^2} \, \de x_1\bigg)\,\de x_2  - \pi g(0,0) \bigg|\leq 
\|g\|_{L^\infty(H^+)} \big(2\varepsilon^{1/2}|b-a|+ \theta(\epsilon)\big) + \pi \omega(\epsilon^{1/4})\,,
\end{equation}
where
\[
\theta(\epsilon) := \bigg|\pi - \int_0^1 \arctan{\bigg(\frac{b}{\epsilon x_2} \bigg)} - \arctan{\bigg(\frac{a}{\epsilon x_2} \bigg)} \, \de x_2\bigg|\,.
\]
In particular, for every $g \colon H^+ \to \mathbb{R} $ bounded and continuous at the origin, we have
\begin{equation}\notag
\lim_{\epsilon \to 0^+ }\frac{1}{\epsilon}\int_0^\epsilon\bigg(\int_a^b g(x_1,x_2) \frac{x_2}{x_1^2+x_2^2} \, \de x_1\bigg)\, \de x_2 = \pi g(0,0)\,.
\end{equation}
\end{lemma}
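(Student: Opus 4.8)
The plan is to view the kernel $\frac{x_2}{x_1^2+x_2^2}$ as, up to the constant $\pi$, the Poisson kernel of the half-plane $H^+$, so that averaging $g$ against it over the thin strip $[a,b]\times[0,\epsilon]$ recovers $g(0,0)$ in the limit. The starting point is the elementary identity
\[
\int_a^b \frac{x_2}{x_1^2+x_2^2}\,\de x_1 = \arctan\!\Big(\frac{b}{x_2}\Big) - \arctan\!\Big(\frac{a}{x_2}\Big)\,,
\]
valid for every $x_2>0$; since $a<0<b$, the right-hand side lies in $(0,\pi)$ and tends to $\pi$ as $x_2\to0^+$.

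First I would decompose $g = g(0,0) + \big(g - g(0,0)\big)$ and estimate the two contributions to the average in \eqref{fondlem} separately. For the constant part, the rescaling $x_2=\epsilon y_2$ turns it into $g(0,0)\int_0^1\big[\arctan(b/(\epsilon y_2)) - \arctan(a/(\epsilon y_2))\big]\,\de y_2$, a convergent integral with integrand bounded by $\pi$, and its distance from $\pi g(0,0)$ is by definition $|g(0,0)|\,\theta(\epsilon)\le \|g\|_{L^\infty(H^+)}\,\theta(\epsilon)$; this accounts for the $\theta(\epsilon)$ term.

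For the oscillatory part $g-g(0,0)$ I would split the domain of integration according to the distance from the origin, at scale $\epsilon^{1/4}$. On the inner region $\{x_1^2+x_2^2\le\epsilon^{1/2}\}$ one has $|g(x_1,x_2)-g(0,0)|\le\omega(\epsilon^{1/4})$ by monotonicity of $\omega$, and, enlarging the region of integration and using $\int_{\mathbb R}\frac{x_2}{x_1^2+x_2^2}\,\de x_1=\pi$ together with $\frac1\epsilon\int_0^\epsilon\de x_2=1$, this contributes at most $\pi\,\omega(\epsilon^{1/4})$. On the complementary region $\{x_1^2+x_2^2>\epsilon^{1/2}\}$, intersected with $x_1\in[a,b]$ and $x_2\in(0,\epsilon)$, I would use $|g-g(0,0)|\le 2\|g\|_{L^\infty(H^+)}$ together with the bound $\frac{x_2}{x_1^2+x_2^2}\le \frac{x_2}{\epsilon^{1/2}}$, keeping the factor $x_2$ so that the $x_2$-average gives the gain $\frac1\epsilon\int_0^\epsilon x_2\,\de x_2=\epsilon/2$; since the $x_1$-section has length at most $|b-a|$, this term is at most $\|g\|_{L^\infty(H^+)}|b-a|\,\epsilon^{1/2}$. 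Adding the three pieces (with room to spare in the constant $2$) yields \eqref{fondlem}.

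The limiting statement then follows by letting $\epsilon\to0^+$ in \eqref{fondlem}: $\epsilon^{1/2}\to0$, $\omega(\epsilon^{1/4})\to0$ since $\omega$ is a modulus of continuity, and $\theta(\epsilon)\to0$ by dominated convergence, the integrand $\arctan(b/(\epsilon x_2))-\arctan(a/(\epsilon x_2))$ being bounded by $\pi$ and converging pointwise to $\pi$ on $(0,1]$. I expect the only genuine difficulty to be the calibration of the cutoff exponent: the inner error behaves like $\omega(\epsilon^{1/4})$ and the outer one like $\|g\|_{L^\infty(H^+)}|b-a|\,\epsilon^{1/2}$, and the scale $\epsilon^{1/4}$, i.e.\ $\epsilon^{1/2}$ in $x_1^2+x_2^2$, is exactly what keeps both small while allowing the kernel to be dominated by $x_2/\epsilon^{1/2}$ away from a neighborhood of the origin.
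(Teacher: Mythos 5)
Your proof is correct and follows essentially the same route as the paper: decompose $g = g(0,0) + (g - g(0,0))$, identify the constant contribution as $\pi g(0,0)$ up to the error $\theta(\epsilon)$, and split the oscillatory part at scale $\epsilon^{1/4}$. The only (cosmetic) difference is that you partition by Euclidean distance from the origin $\{x_1^2+x_2^2 \le \epsilon^{1/2}\}$ in the original variables, whereas the paper first rescales $x_2 \mapsto \epsilon x_2$ and then splits in $x_1$ alone over the interval $(-\epsilon^{1/4},\epsilon^{1/4})$; both yield the claimed estimate, with your bookkeeping in fact giving the slightly sharper constant $\epsilon^{1/2}\|g\|_{L^\infty(H^+)}|b-a|$ in place of $2\epsilon^{1/2}\|g\|_{L^\infty(H^+)}|b-a|$.
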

\begin{proof}
After a change of variable on the integral in \eqref{fondlem}, we can rewrite it as 
$$
\int_0^1\bigg(\int_a^b g(x_1,\epsilon x_2) \frac{\epsilon x_2}{x_1^2+(\epsilon x_2)^2} \, \de x_1\bigg)\,\de x_2\,. 
$$
Note that
$$
\int_a^b \frac{\epsilon x_2}{x_1^2+(\epsilon x_2)^2} \, \de x_1 = \int_a^b \partial_{1} \arctan\bigg(\frac{x_1}{\epsilon x_2}\bigg) \, \de x_1 = \arctan\bigg(\frac{b}{\epsilon x_2}\bigg) - \arctan\bigg(\frac{a}{\epsilon x_2}\bigg)\,,
$$
therefore
\begin{equation}\notag
\begin{split}
&\bigg| \int_0^1\bigg(\int_a^b g(x_1,\epsilon x_2) \frac{\epsilon x_2}{x_1^2+(\epsilon x_2)^2} \, \de x_1\bigg)\,\de x_2 - \pi g(0,0) \bigg|  \\
&\leq\bigg| \int_0^1\bigg(\int_a^b [g(x_1,\epsilon x_2) - g(0,0)] \frac{\epsilon x_2}{x_1^2+(\epsilon x_2)^2} \, \de x_1\bigg)\,\de x_2  \bigg| +  g(0,0) \theta(\epsilon)  \\
&\leq\bigg| \int_0^1\bigg(\int_{(a,b) \setminus(-\epsilon^{1/4}, \epsilon^{1/4}) } \hspace{-0.2cm}[g(x_1,\epsilon x_2) - g(0,0)] \frac{\epsilon x_2}{x_1^2+(\epsilon x_2)^2} \, \de x_1\bigg)\, \de x_2 \bigg| +  \pi \omega(\epsilon^{1/4}) + g(0,0) \theta(\epsilon)\,. 
\end{split}
\end{equation}
Using the estimate
\[
\sup_{x \in [(a,b) \setminus (- \epsilon^{1/4}, \epsilon^{1/4})] \times (0,1)} \frac{\epsilon x_2}{(x_1^2 + (\epsilon x_2)^2)} \leq \frac{\epsilon^{1/2}}{1 + \epsilon^{3/2}} \leq \epsilon^{1/2}\,,
\]
valid  for every $\epsilon \in (0,1)$, we can continue the above chain of inequalities with
\begin{equation}\notag
\begin{split}
&\leq \epsilon^{1/2}\int_0^1\bigg(\int_{(a,b) \setminus(-\epsilon^{1/4}, \epsilon^{1/4}) } |g(x_1,\epsilon x_2) - g(0,0)| \, \de x_1\bigg)\,\de x_2  + \pi \omega(\epsilon^{1/4}) + g(0,0) \theta(\epsilon)  \\
&\leq2\epsilon^{1/2} \|g\|_{L^\infty(H^+)} |b-a| + \pi \omega(\epsilon^{1/4}) + g(0,0) \theta(\epsilon)\,,
\end{split}
\end{equation}
which is \eqref{fondlem}, and the proof is concluded.
\end{proof}

\begin{lemma}
\label{tracelem}
{Let $\Omega \subset \mathbb{R}^2$, let $\gamma \colon [0,\ell] \to \Omega$ be a Lipschitz curve, and set $\Gamma := \{\gamma(\sigma) \in \Omega \ : \ \sigma \in [0,\ell] \}$}. For every $\epsilon>0$ define $\varphi_\epsilon(x) := \frac{\emph{dist}(x, \Gamma)}{\epsilon} \wedge 1$. Then for each $u \in W^{1,1}(\Omega \setminus \Gamma)$ and for each $v \colon \Omega \to \mathbb{R}$ bounded and such that
$$
\lim_{x \to \overline{x}} v(x) = v(\overline{x}) \text{ for every } \overline{x} \in \Gamma\,,
$$
we have
\begin{equation}\notag
\lim_{\epsilon \to 0^+} \int_{\emph{dist}^+(x, \Gamma) < \epsilon} u(x) v(x) |\nabla \varphi_\epsilon (x) | \, \de x = \int_{\Gamma} u^+(y) v(y) \, \de \mathcal{H}^1(y)\,,
\end{equation}
where $u^+$ is the trace on $\Gamma$ from above and
\begin{equation}\notag
\{ \emph{dist}^+(x, \Gamma) < \epsilon \} := \bigcup_{\sigma \in [0,\ell]} B_\epsilon (\gamma(\sigma)) \cap \{ x \in \Omega \ : \ x \cdot (\gamma'(\sigma))^{\perp} > 0 \}\,.
\end{equation}
Equivalently,
\begin{equation}\notag
\lim_{\epsilon \to 0^+} \int_{\emph{dist}^-(x, \Gamma) < \epsilon} u(x)v(x) |\nabla \varphi_\epsilon (x) | \, \de x = \int_{\Gamma} u^-(y)v(y) \, \de \mathcal{H}^1(y)\,,
\end{equation}
where $u^-$ is the trace on $\Gamma$ from below and
\begin{equation}\notag
\{ \emph{dist}^-(x, \Gamma) < \epsilon \} := \bigcup_{\sigma \in [0,\ell]} B_\epsilon (\gamma(\sigma)) \cap \{ x \in \Omega \ : \ x \cdot (\gamma'(\sigma))^{\perp} < 0 \}\,.
\end{equation}
\end{lemma}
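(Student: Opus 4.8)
The plan is to reduce the statement to the model situation of a straight segment via a localization-and-flattening argument, and then to invoke Lemma \ref{fondlem1}. I will treat only the claim with $\mathrm{dist}^+$, the other being symmetric.

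\emph{Step 1: Localization and a partition of unity.} Since $\gamma$ is a compact Lipschitz curve, for each $\sigma$ I can cover $\Gamma$ by finitely many open sets $O_1,\dots,O_N$ such that in each $O_m$ the curve $\Gamma$ is the graph of a Lipschitz function (after a rotation), and such that $\mathrm{dist}(\cdot,\Gamma)$ restricted to a neighborhood of $\Gamma\cap O_m$ coincides with the signed distance to that graph. Choosing a subordinate partition of unity $\{\psi_m\}$ with $\sum_m \psi_m \equiv 1$ near $\Gamma$, and writing $u = \sum_m \psi_m u$, both the left-hand side and the right-hand side of the claimed identity split as finite sums over $m$; moreover $\psi_m u \in W^{1,1}(\Omega\setminus\Gamma)$ with trace $(\psi_m u)^+ = \psi_m u^+$. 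Hence it suffices to prove the statement when $u$ is supported in a single chart, i.e.\ when $\Gamma$ is (in suitable coordinates) the graph $\{(t,\phi(t)) : t\in I\}$ of a Lipschitz $\phi$ over an interval $I$.

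\emph{Step 2: Flattening and a change of variables.} Apply the bi-Lipschitz map $(x_1,x_2)\mapsto (x_1, x_2-\phi(x_1))$, which sends $\Gamma$ to a segment in $\{x_2=0\}$, sends $\{\mathrm{dist}^+ <\epsilon\}$ to a set comparable to a one-sided tube $\{0<x_2<c\epsilon\}$ over the segment, and transforms $|\nabla\varphi_\epsilon|$ into a density that, after rescaling $x_2\mapsto x_2/\epsilon$, converges to the correct limit. Concretely, parametrize a neighborhood of $\Gamma$ by arc length $\sigma$ along $\Gamma$ and signed normal distance $\tau$; in these coordinates $\varphi_\epsilon = (\tau/\epsilon)\wedge 1$, $|\nabla\varphi_\epsilon| = \epsilon^{-1}\mathbf 1_{\{0<\tau<\epsilon\}}$ on the positive side, and the area element is $(1+O(\tau))\,\de\sigma\,\de\tau$ by the tubular-neighborhood (coarea) formula for Lipschitz curves. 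Thus
\[
\int_{\mathrm{dist}^+<\epsilon} u v\,|\nabla\varphi_\epsilon|\,\de x
= \frac1\epsilon \int_0^\epsilon\!\!\int_I u(\sigma,\tau)\,v(\sigma,\tau)\,(1+O(\tau))\,\de\sigma\,\de\tau\,.
\]
The factor $(1+O(\tau))$ contributes $O(\epsilon)$ in the limit and may be discarded.

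\emph{Step 3: Passing to the limit.} It remains to show $\frac1\epsilon\int_0^\epsilon\!\int_I u(\sigma,\tau)v(\sigma,\tau)\,\de\sigma\,\de\tau \to \int_I u^+(\sigma,0)v(\sigma,0)\,\de\sigma$. Since $v$ is bounded and continuous up to $\Gamma$, replacing $v(\sigma,\tau)$ by $v(\sigma,0)$ produces an error controlled by $\|u\|_{L^1}$ times the modulus of continuity of $v$, hence negligible. For the remaining term, $\frac1\epsilon\int_0^\epsilon u(\sigma,\tau)\,\de\tau \to u^+(\sigma,0)$ in $L^1(I)$ as $\epsilon\to0$: this is exactly the one-dimensional statement that, for a $W^{1,1}$ function on $(0,\infty)$, the Cesàro average of the boundary layer converges to the trace, combined with dominated convergence in $\sigma$ using the absolute continuity of the $L^1$ norm (the trace operator $W^{1,1}\to L^1$ is bounded, so $\int_I \frac1\epsilon\int_0^\epsilon |u(\sigma,\tau)-u^+(\sigma,0)|\,\de\tau\,\de\sigma\to0$ by a density argument: it is clear for $u\in C^1(\overline{\Omega\setminus\Gamma})$ and stable under $W^{1,1}$-limits). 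Multiplying by the bounded factor $v(\sigma,0)$ and integrating gives the claim.

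\emph{Main obstacle.} The delicate point is Step 2: justifying that in the Lipschitz (not $C^1$) setting the level sets of $\mathrm{dist}(\cdot,\Gamma)$ genuinely behave like a tubular neighborhood with a well-defined Jacobian, so that $|\nabla\varphi_\epsilon|\,\de x$ disintegrates as $\epsilon^{-1}\de\sigma\,\de\tau$ up to lower-order terms, and that the one-sided sets $\{\mathrm{dist}^\pm<\epsilon\}$ really correspond to $\{\tau \gtrless 0\}$. This is where the explicit product structure of Lemma \ref{fondlem1} enters: if one prefers to avoid the Lipschitz tubular-neighborhood theorem, one can instead keep the Euclidean kernel and, after flattening, recognize the inner integral as $\int_a^b g(x_1,x_2)\,\frac{x_2}{x_1^2+x_2^2}\,\de x_1$ with $g$ encoding $uv$ and the Jacobian factors, so that Lemma \ref{fondlem1} yields the pointwise-in-$\sigma$ limit $\pi g(0,0)$, which after unwinding the normalization is precisely $u^+(\sigma,0)v(\sigma,0)$; a uniform $L^1$ bound then upgrades this to the integrated statement by dominated convergence.
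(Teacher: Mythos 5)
The paper's proof is a one-liner: apply the coarea formula to the Lipschitz function $\varphi_\epsilon$ itself. This immediately disintegrates the left-hand side as
\[
\int_{\mathrm{dist}^+<\epsilon} u\,v\,|\nabla\varphi_\epsilon|\,\de x
=\frac1\epsilon\int_0^\epsilon\Big(\int_{\{\mathrm{dist}(\cdot,\Gamma)=t\}\cap\{+\text{ side}\}}u\,v\,\de\mathcal H^1\Big)\de t\,,
\]
and one concludes by the $W^{1,1}$ trace theory: the slice integrals converge to $\int_\Gamma u^+v\,\de\mathcal H^1$ (first for smooth $u$, then by density and the boundedness of the trace operator), while $v$ presents no difficulty because it is bounded and continuous up to $\Gamma$. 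Your Step 3 is exactly this Cesàro-averaging argument, and it is correct.

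Where you diverge from the paper is Steps 1--2: you localize and then flatten $\Gamma$ via arc-length/normal coordinates. This introduces an unnecessary change of variables, and more importantly it has a gap for the level of generality claimed: the statement allows $\gamma$ merely Lipschitz, and for a Lipschitz curve the normal field $n(\sigma)=(\gamma'(\sigma))^\perp$ is only defined a.e.\ and the map $(\sigma,\tau)\mapsto\gamma(\sigma)+\tau n(\sigma)$ need not be injective on any uniform tube (the usual tubular-neighborhood theorem requires at least $C^{1,1}$). You acknowledge this obstacle, but the workaround you suggest --- invoking Lemma~\ref{fondlem1} --- does not apply: that lemma concerns a Poisson-type kernel $\tfrac{x_2}{x_1^2+x_2^2}$ which arises from $\partial_1 S\,\partial_2 S$ near the crack tip, not from $|\nabla\varphi_\epsilon|$, and there is no such kernel in the present integral. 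The coarea formula sidesteps the whole issue: it holds for the Lipschitz map $\varphi_\epsilon$ with no regularity assumption on $\Gamma$, and the rectifiability of the level sets $\{\mathrm{dist}=t\}$ for a.e.\ $t$ comes for free. So the recommended fix is to drop the flattening entirely and go straight to the coarea disintegration, after which your Step~3 density argument finishes the proof.
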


\begin{proof}
It is enough to apply the coarea formula to the Lipschitz maps $\varphi_{\epsilon}$.
\end{proof}

\begin{remark}
In what follows we compute the energy balance in the case of homogeneous Neumann conditions on the whole $\partial \Omega$. However, the same proof applies with no changes to the case of Dirichlet boundary conditions. For example, to treat the homogeneous Dirichlet condition on $\partial_D \Omega\subseteq \partial \Omega$, it is enough to check that the time derivative of the solution $\dot{u}(t)$ has still zero trace on $\partial \Omega$, in such a way that it still remains an admissible test function. But this is simply because the incremental quotient in time $[u(t+h) - u(t)]/h$ converges to $\dot{u}(t)$ as $h \to 0$, strongly in $H^1$ in a sufficiently small neighborhood of $\partial_D \Omega$, so that $\dot{u}$ has still zero trace on the Dirichlet part of the boundary.

Analogously, if we prescribe a regular enough non-homogeneous Dirichlet boundary condition, we can rewrite the wave equation changing the forcing term $f$ appearing in its right-hand side, and turn the non-homogeneous Dirichlet condition into a homogeneous one. Also in this case, the computations follow unchanged.
\end{remark}

\begin{proposition}
\label{enbalrect}
Let $\Omega \subset \mathbb{R}^2$ be a Lipschitz regular domain, and let $\big(\Gamma(t)\big)_{t \in [0,1]}$ be a family of rectilinear cracks inside $\Omega$, of the form
\[
\Gamma(t) \setminus \Gamma(0) := \{ (x_1,x_2) \in \mathbb{R}^2 \ : \ 0 < x_1 \leq s(t)-s(0), \ x_2 = 0 \}\,,
\]
where $s \in C^2([0,1])$
and $\dot{s}(t) \geq 0$ for every $t \in [0,1]$.

Suppose that a function $u \colon [0,1] \times \Omega \to \mathbb{R}$ can be decomposed as in \eqref{decform} and satisfies the wave equation with homogeneous Neumann boundary conditions on the boundary and on the cracks:
\begin{equation}
\label{waveeq1}
\ddot{u}(t) - \Delta u(t) = f(t) \text{ in } \Omega \setminus \Gamma(t)\,,
\end{equation}
for a.e. $t \in [0,1]$, with initial conditions $u(0) = u_0$ and $\dot{u}(0) = u_1$. Then for every $t \in [0,1]$, $u$ satisfies the energy balance
\begin{equation}
\label{enebal}
\mathcal E(t) - \mathcal E(0) + \mathcal{H}^1(\Gamma(t) \setminus \Gamma(0)) 
=\int_0^t \langle f(
\tau), \dot{u}(\tau) \rangle_{L^2(\Omega)} \, \de \tau
\end{equation}
if and only if the stress intensity factor $k$ is constantly equal to $\frac{2}{\sqrt{\pi}}$ in the set $\{\dot{s}>0\}$.
\end{proposition}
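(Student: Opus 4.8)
\emph{Plan of proof.} The content of the proposition is the following \emph{unconditional} energy identity:
\begin{equation}\label{pdgoal}
\mathcal E(t)-\mathcal E(0)+\frac{\pi}{4}\int_0^t k^2(\tau)\,\dot s(\tau)\,\de\tau=\int_0^t\langle f(\tau),\dot u(\tau)\rangle_{L^2(\Omega)}\,\de\tau\qquad\text{for every }t\in[0,1]\,.
\end{equation}
Indeed, since $\Gamma(\cdot)$ is rectilinear and $\dot s\ge0$, one has $\mathcal H^1(\Gamma(t)\setminus\Gamma(0))=s(t)-s(0)=\int_0^t\dot s(\tau)\,\de\tau$; hence, granted \eqref{pdgoal}, the balance \eqref{enebal} holds for every $t\in[0,1]$ if and only if $\int_0^t\big(1-\tfrac\pi4 k^2(\tau)\big)\dot s(\tau)\,\de\tau=0$ for every $t$, i.e.\ $\big(1-\tfrac\pi4 k^2(\tau)\big)\dot s(\tau)=0$ for a.e.\ $\tau\in[0,1]$; as $k$ is continuous and $\{\dot s>0\}$ is open, this is equivalent to $k\equiv 2/\sqrt\pi$ on $\{\dot s>0\}$. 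So the whole proof is devoted to \eqref{pdgoal}.

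To establish \eqref{pdgoal} I would perform the classical energy computation for the wave equation — test \eqref{waveeq1} with the time-derivative of the solution and integrate — regularized near the crack, since by \eqref{decform} and Remark~\ref{compder} the functions $\dot u(\tau)$ and $\nabla u(\tau)$ blow up like $|x-r(\tau)|^{-1/2}$ at the moving tip $r(\tau):=\gamma(s(\tau))$, so that $\dot u(\tau)\notin H^1(\Omega\setminus\Gamma(\tau))$ is not an admissible test function. With $\varphi_\epsilon(x):=\dist(x,\Gamma)/\epsilon\wedge1$ the cut-off of Lemma~\ref{tracelem}, built on the whole (fixed) segment $\Gamma$, I would test \eqref{waveeq1} at time $\tau$ with $\varphi_\epsilon\,\tfrac1h\big(u(\tau)-u(\tau-h)\big)$ — an admissible element of $H^1(\Omega\setminus\Gamma(\tau))$ for every fixed $\epsilon,h>0$, because $u(\tau-h)$ is of class $H^2$ across $\Gamma(\tau)\setminus\Gamma(\tau-h)$ — then integrate over $\tau\in[0,t]$ and let $h\to0^+$. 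This passage is routine from the time-regularity in \eqref{decform}, once one observes that multiplication by $\varphi_\epsilon$ (which vanishes on $\Gamma$) turns the non-$L^2$ parts $\ddot u$, $\nabla\dot u$ into $L^2$ functions for every fixed $\epsilon$; it produces
\[
\mathcal E_\epsilon(t)-\mathcal E_\epsilon(0)+\int_0^t\!\!\int_\Omega\dot u(\tau)\,\nabla u(\tau)\cdot\nabla\varphi_\epsilon\,\de x\,\de\tau=\int_0^t\langle f(\tau),\varphi_\epsilon\dot u(\tau)\rangle_{L^2(\Omega)}\,\de\tau\,,
\]
where $\mathcal E_\epsilon(t):=\tfrac12\int_\Omega\varphi_\epsilon\big(|\dot u(t)|^2+|\nabla u(t)|^2\big)\,\de x$.

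It then remains to let $\epsilon\to0^+$. By dominated convergence $\mathcal E_\epsilon(t)\to\mathcal E(t)$ (using $\dot u(t),\nabla u(t)\in L^2(\Omega)$), and the right-hand side tends to $\int_0^t\langle f,\dot u\rangle$. The crucial term is the flux $\int_0^t\!\int_\Omega\dot u\,\nabla u\cdot\nabla\varphi_\epsilon$, a $0\cdot\infty$ indeterminacy ($|\nabla\varphi_\epsilon|=1/\epsilon$ on the $\epsilon$-tube around $\Gamma$, while $\dot u\,\nabla u$ is only $L^1$ there). I would split $\dot u\,\nabla u=\big(\dot u\,\nabla u-k^2\zeta^2\dot{\overline{S}}\,\nabla\overline{S}\big)+k^2\zeta^2\dot{\overline{S}}\,\nabla\overline{S}$. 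The first summand lies in $W^{1,1}(\Omega\setminus\Gamma(\tau);\mathbb R^2)$ by Remark~\ref{compder}, so Lemma~\ref{tracelem} converts its flux into a boundary integral over $\Gamma$ of the normal trace, which vanishes: on the actual crack $\partial_n u=0$ (Neumann) and $\partial_n\overline{S}=0$ off the tip (a direct check on $S=r^{1/2}\sin(\theta/2)$), while along the part of $\Gamma$ lying ahead of $r(\tau)$ the solution $u(\tau)$ is smooth, has no jump, and the two one-sided contributions cancel. For the second summand, using $\overline{S}=S(\Phi(\cdot))$, the explicit formulas of Remark~\ref{compder} — in particular $\partial_1S\,\partial_2S=-x_2/(8|x|^2)$ — and the change of variables $\xi_1=x_1-(s(\tau)-s(0))$, $\eta_2=\sqrt{1-|\dot s(\tau)|^2}\,x_2$ that straightens the horizontal dilation built into $\Phi$, the flux through the upper half of the tube reduces, up to the prefactor $\tfrac{\dot s(\tau)}{8}$, to $\tfrac1{\epsilon'}\int_0^{\epsilon'}\big(\int_a^b k^2\zeta^2\,\tfrac{\eta_2}{\xi_1^2+\eta_2^2}\,\de\xi_1\big)\de\eta_2$ with $\epsilon'=\sqrt{1-|\dot s(\tau)|^2}\,\epsilon\to0$ and $a<0<b$; by Lemma~\ref{fondlem1} this tends to $\pi k^2(\tau)$ (at the tip $\zeta=1$), so the upper half-tube contributes $\tfrac\pi8 k^2(\tau)\dot s(\tau)$, and the lower half gives the same, by the oddness of $\dot{\overline{S}}\,\partial_2\overline{S}$ in $x_2$ and the sign of $\nabla\varphi_\epsilon$. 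Summing, the flux tends to $\tfrac\pi4\int_0^t k^2(\tau)\dot s(\tau)\,\de\tau$, which yields \eqref{pdgoal}.

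The main obstacle is precisely this last step — converting the apparent divergence of the flux term into a finite, explicit limit. The delicate points are: obtaining the constant $\pi/4$, for which one must carry the dilation factor $\sqrt{1-|\dot s(\tau)|^2}$ through the change of variables (so that it cancels and only $\dot s(\tau)$ survives, consistently with the general formula \eqref{energia} for $A=I$), and correctly account for the two sides of the crack and for the portion of the $\epsilon$-tube lying ahead of the moving tip; and making the $W^{1,1}$/trace argument and the $h\to0$ regularization rigorous in spite of $\ddot u\notin L^2(\Omega)$, which is exactly why the cut-off $\varphi_\epsilon$ is inserted before the computation.
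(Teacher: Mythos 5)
Your argument follows the paper's own route: test the weak wave equation with $\varphi_\epsilon\,\dot u(t)$, reduce the flux term $\int_0^t\!\int_\Omega \dot u\,\nabla u\cdot\nabla\varphi_\epsilon$ via the $W^{1,1}$ splitting of Remark~\ref{compder} to the singular product $k^2\zeta^2\,\dot{\overline{S}}\,\nabla\overline{S}$, and evaluate the resulting $0\cdot\infty$ limit on each side of the crack by Lemma~\ref{fondlem1}, obtaining $\tfrac{\pi}{4}\int_0^t k^2\dot s\,\de\tau$ and hence the equivalence with $k\equiv 2/\sqrt{\pi}$ on $\{\dot s>0\}$. The only departures are inessential: the extra time-regularization $(u(\tau)-u(\tau-h))/h$ is not needed, since the cut-off $\varphi_\epsilon$ already makes $\varphi_\epsilon\dot u(t)$ an admissible $H^1$ test function despite $\dot u(t)\notin H^1$; and you rescale both $\xi_1$ and $\eta_2$, whereas the paper rescales only $x_1$ — both variants make the dilation factor $\sqrt{1-|\dot s|^2}$ cancel against the $1/\epsilon$ prefactor and give the same constant.
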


\begin{proof}
By hypothesis the function $u$ can be decomposed as $u(t,x) = u^R(t,x) + k(t)  \zeta(t,x) \overline{S}(t,x)$, where $u^R(t) \in H^2(\Omega \setminus \Gamma(t))$, $\zeta(t)$ is a cut--off function supported in a neighborhood of the moving tip of $\Gamma(t)$, and
$$
\overline{S}(t,x)=S\bigg( \frac{x_1 - (s(t)-s(0))}{\sqrt{1 - | \dot{s}(t)|^2}}, x_2 \bigg)\,,
$$
where $S(x_1, x_2) = \frac{x_2}{\sqrt{2} \sqrt{|x| + x_1}}$.

Fix $\overline{t} \in [0,1]$. For every $\epsilon > 0$ define $\varphi_{\epsilon}(x) = \frac{\text{dist}(x, \Gamma(\overline{t}) \setminus \Gamma(0))}{\epsilon} \wedge 1$. Since $\varphi_\epsilon \dot{u}(t) \in H^{1}(\Omega  \setminus \Gamma(t))$, we can use it as test function in \eqref{waveeq1}, and we get
\begin{equation}
\label{wetested}
\begin{split}
\int_0^{\overline{t}} \langle \ddot{u}(t), \varphi_{\epsilon} \dot{u}(t) \rangle_{H^1(\Omega)} \, \de t& + \int_0^{\overline{t}} \langle \nabla u(t), \nabla \dot{u}(t) \varphi_{\epsilon} \rangle_{L^2(\Omega;\mathbb R^2)} \, \de t \\
&+ \int_0^{\overline{t}} \langle \nabla u(t), \nabla \varphi_\epsilon \dot{u}(t) \rangle_{L^2(\Omega;\mathbb R^2)} \, \de t = \int_0^{\overline{t}} \langle f(t), \dot{u}(t) \varphi_\epsilon \rangle_{L^2(\Omega)} \, \de t\,.
\end{split}
\end{equation}
Using integration by parts with the fact that $t \mapsto \| \dot{u}(t)\|^2_{L^2(\Omega, \varphi_\epsilon \de x)}$ is absolutely continuous, we obtain
\begin{equation}\notag
\begin{split}
\int_0^{\overline{t}} \langle &\ddot{u}(t), \varphi_{\epsilon} \dot{u}(t) \rangle_{H^1(\Omega)} \, \de t = \frac{1}{2}\int_0^{\overline{t}}  \frac{\de }{\de t} \| \dot{u}(t) \|^2_{L^2(\Omega,\varphi_\epsilon \de x)} \, \de t \\
&= \frac{1}{2} \| \dot{u}(\overline{t}) \|^2_{L^2(\Omega, \varphi_{\epsilon} \de x)} - \frac{1}{2}\| \dot{u}(0) \|^2_{L^2(\Omega, \varphi_{\epsilon} \de x)}\,, 
\end{split}
\end{equation}
and passing to the limit as $\epsilon \to 0^+$, by dominated convergence Theorem, we have
\begin{equation}\notag
\lim_{\epsilon \to 0^+} \int_0^{\overline{t}} \langle \ddot{u}(t), \varphi_{\epsilon} \dot{u}(t) \rangle_{H^1(\Omega)} \, \de t  = \frac{1}{2} \| \dot{u}(\overline{t}) \|^2_{L^2(\Omega)} - \frac{1}{2} \| \dot{u}(0) \|^2_{L^2(\Omega)}\,.
\end{equation}
Analogously, taking the limit as $\epsilon \to 0$ in the second term in the left-hand side and in the right-hand side of \eqref{wetested}, we have, respectively,
\begin{eqnarray*}
&& \lim_{\epsilon \to 0} \int_0^{\overline{t}} \langle \nabla u(t), \nabla \dot{u}(t) \varphi_{\epsilon} \rangle_{L^2(\Omega;\mathbb R^2)} \, \de t = \frac{1}{2} \| \nabla u(\overline{t}) \|^2_{L^2(\Omega;\mathbb R^2)} - \frac{1}{2} \| \nabla u(0) \|^2_{L^2(\Omega;\mathbb R^2)}\,,
\\
&& 
\lim_{\epsilon \to 0} \int_0^{\overline{t}} \langle f(t), \dot{u}(t) \varphi_\epsilon \rangle_{L^2(\Omega)} \, \de t = \int_0^{\overline{t}} \langle f(t), \dot{u}(t) \rangle_{L^2(\Omega)} \, \de t\,.
\end{eqnarray*}
The most delicate term is the third one in the left-hand side of \eqref{wetested}. First of all, we write the partial derivatives explicitly:
 \begin{eqnarray*}
&&\nabla [ k(t) \zeta(t,x) \overline{S}(t,x)] = k(t) \nabla \zeta(t,x)  \overline{S}(t,x) + k(t) \zeta(t,x) \nabla \overline{S}(t,x)\,,
\\
&&
\frac{\de}{\de t} [k(t) \zeta(t,x) \overline{S}(t,x)] =\dot{k}(t)\zeta(t,x) \overline{S}(t,x) +  k(t)  \dot{\zeta}(t,x) \overline{S}(t,x) + k(t) \zeta(t,x) \dot{\overline{S}}(t,x)\,.
\end{eqnarray*}
Moreover, if we set $\Phi_1(t,x) =  \frac{x_1 - s(t)}{\sqrt{1 - |\dot{s}(t)|^2}}$, we have
\[
\nabla \overline{S}(t,x) = \bigg( \frac{1}{\sqrt{1 - |\dot{s}(t)|^2}} {\partial_1} S( \Phi_1(t,x), x_2 ), {\partial_2} S( \Phi_1(t,x), x_2 )\bigg)
\]
and
\begin{equation}\notag
\begin{split}
\dot{\overline{S}}(t,x) &= \bigg[ \frac{-\dot{s}(t) (1 - |\dot{s}(t)|^2) + \dot{s}(t) \ddot{s}(t)(x_1 - (s(t)-s(0)))}{(1 - |\dot{s}(t)|^2)^{3/2}}  \bigg] {\partial_1} S( \Phi_1(t,x), x_2 ) \\
&= \dot{\Phi}_1(t,x)\sqrt{1-|\dot s(t)|^2}  {\partial_1}\bar S(t,x)\,.
\end{split}
\end{equation}
Thanks to Remark \ref{compder}, we know that the only contribution to the limit as $\epsilon \to 0$ is given by the following term:
\[
\int_0^{\overline{t}} k^2(t) \langle \zeta^2(t,x) \nabla \overline{S}(t,x), \nabla\varphi_{\epsilon}(x) \dot{\overline{S}}(t,x) \rangle_{L^2(\Omega;\mathbb R^2)} \, \de t\,.
\]
Therefore, we need to compute
\begin{equation}
\label{prinlim}
\lim_{\epsilon \to 0^+}\int_0^{\overline{t}}\bigg( \int_{\{ \text{dist}(x, \Gamma(t) \setminus \Gamma(0)) < \epsilon\}} k^2(t)  \zeta^2(t,x) \nabla \overline{S}(t,x) \cdot \nabla\varphi_{\epsilon}(x) \dot{\overline{S}}(t,x)\, \de x \bigg)  \, \de t\,.
\end{equation}
To this aim, we set $I_\epsilon(t) := \int_{\{ \text{dist}(x, \Gamma(t) \setminus \Gamma(0)) < \epsilon\}} k^2(t) \zeta^2(t,x) \nabla \overline{S}(t,x) \cdot \nabla\varphi_{\epsilon}(x) \dot{\overline{S}}(t,x)  \, \de x$ and we decompose $I_\epsilon$ as $I^+_{\epsilon} + I_{\epsilon}^-$, where $I^+_{\epsilon}$ is the integral $I_\epsilon$ restricted to the upper half plane $\{ x_2 > 0 \}$ and $I^-_\epsilon$ is the integral $I_\epsilon$ restricted to the lower half plane $\{ x_2 < 0 \}$.

Let us focus on $I_\epsilon^+(t)$.  \\
For brevity, we write $r(t) := (s(t)-s(0),0)$ for every $t \in [0,T]$. Then the gradient of $\varphi_\epsilon$ reads
\begin{equation}\notag
\nabla \varphi_{\epsilon} =
\begin{cases}
\frac{e_2}{\epsilon} &\text{ in } \{ x \in \mathbb{R}^2 \ : \  0 \leq x_1 \leq s(\overline{t})-s(0), \ 0 \leq x_2 < \epsilon \} \\
\frac{x}{\epsilon|x|} &\text{ in } \{ x \in \mathbb{R}^2 \ : \  x \in B_{\epsilon}(0), \ x_1 < 0, \ x_2 \geq 0 \}  \\ 
\frac{x - {r(\overline t)}}{\epsilon|x - {r(\overline t)}|} &\text{ in } \{ x \in \mathbb{R}^2 \ : \ x \in B_{\epsilon}({r(\overline t)}), \ x_1 > s(\overline{t})-s(0), \ x_2 \geq 0 \} \\
0 &\text{ otherwise on } \{ x_2 \geq 0 \}\,.
\end{cases}
\end{equation}

Thus we get
\begin{equation}
\label{seppie}
\begin{split}
I^+_\epsilon(t) & = \frac{1}{\epsilon} \int_{[0,s(\overline{t})-s(0)] \times (0, \epsilon]} \hspace{-0,5cm}k^2(t) \sqrt{1-|\dot s(t)|^2} \zeta^2(t,x) {\partial_2} \overline{S}(t,x) \dot{\Phi}_1(t,x) {\partial_1} \overline{S}(t,x) \, \de x \\
&+ \frac{1}{\epsilon} \int_{B_\epsilon(0) \cap \{x_1 < 0\} \times \{x_2 \geq 0 \}} \hspace{-1,15cm} k^2(t) \sqrt{1-|\dot s(t)|^2} \zeta^2(t,x) \bigg(\nabla \overline{S}(t,x) \cdot \frac{x}{|x|}\bigg) \dot{\Phi}_1(t,x) {\partial_1} \overline{S}(t,x) \, \de x \\
&+ \frac{1}{\epsilon} \int_{B_\epsilon({r(\overline t)}) \cap \{x_1 > s(\overline{t})-s(0)\} \times \{x_2 \geq 0 \}} \hspace{-2,5cm}k^2(t) \sqrt{1-|\dot s(t)|^2} \zeta^2(t,x) \bigg(\nabla \overline{S}(t,x) \cdot \frac{x- {r(\overline t)}}{|x - {r(\overline t)}|} \bigg) \dot{\Phi}_1(t,x) {\partial_1} \overline{S}(t,x) \, \de x\,.
\end{split}
\end{equation}
We notice that the last two terms in \eqref{seppie} have integrands which are bounded on the domains of integration, and so passing to the limit as $\epsilon$ goes to $0$ they do not give any contribution. Thus we only have to analyze the first term of \eqref{seppie}. Recalling that $\zeta(x,t)=\zeta\big(\Phi_1(t,x) ,x_2\big)$, $\overline{S}(t,x) = S\big(\Phi_1(t,x) ,x_2\big)$, $\Phi_1(t,x) =  \frac{x_1 - (s(t)-s(0))}{\sqrt{1 - |\dot{s}(t)|^2}}$,
and making the change of variable $x'_1\sqrt{1 - |\dot{s}(t)|^2} = x_1 - (s(t)-s(0))$, we rewrite the first term of \eqref{seppie} as
\begin{equation}
\label{finte1}
\begin{split}
&-\frac{k^2(t)\dot{s}(t) }{\epsilon} \bigg(\int_0^\epsilon \int_{a_t}^{b_t} \zeta^2(x_1, x_2) {\partial_1} S(x_1,x_2) {\partial_2} S(x_1,x_2) \, \de x_1\de x_2\bigg)   \\ +
&\frac{k^2(t)\dot{s}(t) \ddot{s}(t) }{\epsilon \sqrt{1 - |\dot{s}(t)|^2}} \bigg(\int_0^\epsilon \int_{a_t}^{b_t} x_1 \zeta^2(x_1, x_2)   {\partial_1} S(x_1,x_2) {\partial_2} S(x_1,x_2) \, \de x_1\de x_2\bigg)\,,
\end{split}
\end{equation}
where the interval $(a_t, b_t)$ denotes the segment  $\frac{\{ 0 < x_1 <s(\overline{t})-s(0)\} - (s(t)-s(0))}{\sqrt{1- |\dot{s}(t)|^2}}$.

Notice that
\begin{equation}\notag
\begin{split}
-\frac{k^2(t)\dot{s}(t) }{\epsilon} \bigg(\int_0^\epsilon & \int_{a_t}^{b_t} \zeta^2(x_1, x_2) {\partial_1} S(x_1,x_2) {\partial_2} S(x_1,x_2) \, \de x_1\de x_2\bigg) \\ = 
&\frac{k^2(t)\dot{s}(t) }{\epsilon} \bigg(\int_0^\epsilon \int_{a_t}^{b_t} \zeta^2(x_1, x_2)  \frac{x_2} {8|x|^2} \, \de x_1\de x_2\bigg)
\end{split}
\end{equation}
and that the function $(x_1,x_2) \mapsto \zeta^2(x_1, x_2) $ is bounded and continuous in $(0,0)$, therefore we are in a position to apply Lemma \ref{fondlem1}, which gives, in the limit as $\epsilon \to 0^+$,
\begin{equation}\notag
\begin{split}
\lim_{\epsilon \to 0^+}\frac{k^2(t)\dot{s}(t) }{\epsilon} \bigg(\int_0^\epsilon \int_{a_t}^{b_t} \zeta^2(x_1, x_2)  \frac{x_2} {8|x|^2} \, \de x_1\de x_2\bigg) 
=\frac{\pi}{8}k^2(t) \dot{s}(t) \zeta^2(0,0)\,.
\end{split}
\end{equation}

Arguing in the very same way, we can show that the limit as $\epsilon \to 0^+$ of the second term of \eqref{finte1}, thanks to the presence of $x_1$, is zero. This means that the limit of $I_\epsilon^+(t)$ is
\[
\lim_{\epsilon \to 0^+} I^+_\epsilon(t) =  \frac{\pi}{8} \dot{s}(t)k^2(t)\,,
\]
and, similarly,
\[
\lim_{\epsilon \to 0^+} I_\epsilon^-(t) = \frac{\pi}{8} \dot{s}(t)k^2(t)\,.
\]
All in all,
\[
\lim_{\epsilon \to 0^+} I_\epsilon(t) = \lim_{\epsilon \to 0^+} [I^+_\epsilon(t) + I^-_\epsilon(t)]=\frac{\pi}{4} k^2(t)\dot{s}(t)\,.
\]
Thanks to the estimate in \eqref{fondlem}, we infer that the family of functions $(I^+_\epsilon(t))_{\epsilon > 0}$ are dominated on $[0,1]$ by a bounded function, and the same holds for $(I^-_\epsilon(t))_{\epsilon > 0}$; by the dominated convergence Theorem, we can pass the limit in \eqref{prinlim} inside the integral in time, and we can write
\[
\lim_{\epsilon \to 0^+}\int_0^{\overline{t}}I_\epsilon(t) \, \de t = \int_0^{\overline{t}} \frac{\pi}{4} k^2(t) \dot{s}(t) \, \de t\,.
\]
So we deduce that the energy balance in \eqref{enebal} holds for every $\overline{t} \in [0,1]$ if and only if the stress intensity factor $k(t)$ is equal to $\frac{2}{\sqrt{\pi}}$ whenever $\dot{s}(t)>0$.
\end{proof}

\begin{remark} We underline that our approach is different to that of Dal Maso, Larsen, and Toader \cite[\S 4]{DMLT}: in order to derive the energy balance associated to a horizontal crack opening with constant velocity $c$, they prove that the kinetic+elastic energy of $u(t)$ is constant in the moving ellipse $E_r(t)=\{(x_1,x_2)\in \mathbb R^2\ :\ (x_1-ct)^2/(1-c^2) + x_2^2 \leq r^2\}$ centered at the crack tip $(ct,0)$, for some small $r>0$, and they make the explicit computation of the energy in $\mathbb R^2 \setminus E_r(t)$.
\end{remark}

We now generalize the previous result to non straight cracks. 

\begin{theorem}\label{thmeb}
Let $\Omega \subset \mathbb{R}^2$ be a Lipschitz regular domain, and let $\big(\Gamma(t)\big)_{t \in [0,1]}$ be a family of growing cracks inside $\Omega$. Assume that there exists a bi-Lipschitz map $\Lambda \colon \Omega \to \Omega$ with the following properties:
\begin{enumerate}
\item $\Lambda(\Gamma(t) \setminus \Gamma(0)) = \{ (x_1,x_2) \in \mathbb{R}^2 \ : \ 0< x_1 \leq s(t)-s(0), \, x_2=0  \}$, where $s \in C^2([0,1])$
and $\dot{s}(t) \geq 0$ for every $t \in [0,1]$,
\item
$\mathcal{H}^1\big(\Lambda(\Gamma(t) \setminus \Gamma(0))\big) = \mathcal{H}^1\big(\Gamma(t) \setminus \Gamma(0)\big) \text{ for every } t \in [0,1]$,
\item
$\lim_{x \to \overline{x}}\nabla \Lambda(x) = \nabla \Lambda(\overline{x}) \in \text{SO}(2)^+, \text{ for every } \overline{x} \in \overline{\Gamma(1) \setminus \Gamma(0)}$.
\end{enumerate}
Suppose that a function $u \colon [0,1] \times \Omega \to \mathbb{R}$ can be decomposed as in \eqref{decform}
and satisfies the wave equation with homogeneous Neumann boundary conditions on the boundary and on the cracks:
\begin{equation}
\label{waveeq2}
\ddot{u}(t) - \Delta u(t) = f(t) \text{ in } \Omega \setminus \Gamma(t)\,,
\end{equation}
for a.e. $t \in [0,1]$, with initial conditions $u(0) = u_0$ and $\dot{u}(0) = u^1$. Then for every $t \in [0,1]$, $u$ satisfies the following energy balance
\begin{equation}
\label{enebal2}
\mathcal E(t) - \mathcal E(0) +  \mathcal{H}^1(\Gamma(t) \setminus \Gamma(0)  ) 
= \int_0^t \langle f(\tau), \dot{u}(\tau) \rangle_{L^2(\Omega)} \, \de \tau
\end{equation}
if and only if the stress intensity factor $k$ is constantly equal to $\frac{2}{\sqrt{\pi}}$ in the set $\{\dot{s}>0\}$.
\end{theorem}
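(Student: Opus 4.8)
The plan is to mimic the proof of Proposition~\ref{enbalrect}, transporting the computation of the singular contribution from $\Omega$ to the straightened configuration through the map $\Lambda$. Fix $\overline t\in[0,1]$ and, for $\epsilon>0$, set $\varphi_\epsilon(x):=\big(\mathrm{dist}(\Lambda(x),\Lambda(\Gamma(\overline t)\setminus\Gamma(0)))/\epsilon\big)\wedge 1$; by property~(1) this is a Lipschitz function that vanishes on $\Gamma(\overline t)\setminus\Gamma(0)$ and equals $1$ outside an $\epsilon$-neighborhood of it, so $\varphi_\epsilon\,\dot u(t)$ is an admissible test function in \eqref{waveeq2}. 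Testing, integrating by parts in time the term with $\ddot u$, and letting $\epsilon\to 0^+$ by dominated convergence exactly as in Proposition~\ref{enbalrect}, the contributions of $\ddot u$, of $\langle\nabla u,\nabla\dot u\,\varphi_\epsilon\rangle$, and of $\langle f,\dot u\,\varphi_\epsilon\rangle$ converge respectively to $\tfrac12\|\dot u(\overline t)\|_{L^2}^2-\tfrac12\|\dot u(0)\|_{L^2}^2$, to $\tfrac12\|\nabla u(\overline t)\|_{L^2}^2-\tfrac12\|\nabla u(0)\|_{L^2}^2$, and to $\int_0^{\overline t}\langle f(t),\dot u(t)\rangle_{L^2}\,\de t$. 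Hence the whole statement reduces to computing the limit of $\int_0^{\overline t}\langle\nabla u(t),\nabla\varphi_\epsilon\,\dot u(t)\rangle_{L^2}\,\de t$.

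Using the decomposition \eqref{decform}, the regularity of $u^R$ and $\zeta$, and the explicit form of $S$, the argument of Remark~\ref{compder} carries over verbatim (bi-Lipschitzianity of $\Lambda$ does not interfere): $\dot u(t)\nabla u(t)-k^2(t)\zeta^2(t)\dot{\overline S}(t)\nabla\overline S(t)\in W^{1,1}(\Omega\setminus\Gamma(t);\re^2)$ for every $t$. For a $W^{1,1}$ vector field $F$, transporting $\langle F(t),\nabla\varphi_\epsilon\rangle$ through $z=\Lambda(x)$ turns $\varphi_\epsilon$ into the Euclidean distance function of $\widetilde\Gamma:=\Lambda(\Gamma(\overline t)\setminus\Gamma(0))$ and, by~(3), makes $\nabla\varphi_\epsilon$ asymptotically collinear with $\pm n$ along $\Gamma(\overline t)\setminus\Gamma(0)$; Lemma~\ref{tracelem} then shows that $\int_0^{\overline t}\langle F(t),\nabla\varphi_\epsilon\rangle\,\de t$ converges to a line integral over $\Gamma(\overline t)\setminus\Gamma(0)$ of the difference of the normal traces of $F$ from the two sides. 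On $\Gamma(\overline t)\setminus\Gamma(t)$, where the crack has not yet arrived, these traces coincide; on $\Gamma(t)$ the normal trace of $\dot u\nabla u$ vanishes by the homogeneous Neumann condition, and that of $\zeta^2\dot{\overline S}\nabla\overline S$ vanishes because $\partial_n\overline S=0$ on the branch cut (since $\partial_\theta S=0$ at $\theta=\pm\pi$). Therefore the $W^{1,1}$ part contributes nothing, and only
\[
L:=\lim_{\epsilon\to 0^+}\int_0^{\overline t}\Big(\int_\Omega k^2(t)\,\zeta^2(t,x)\,\nabla\overline S(t,x)\cdot\nabla\varphi_\epsilon(x)\,\dot{\overline S}(t,x)\,\de x\Big)\de t
\]
is left.

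To compute $L$ we change variables $z=\Lambda(x)$. By~(1) the fracture becomes the rectilinear crack of Proposition~\ref{enbalrect}, and by~(2) the arc-length parameter is preserved (so $\dot s^{(1)}=\dot s$), whence near the moving tip $\overline S(t,x)=\widetilde S(t,\Lambda(x))$ with $\widetilde S(t,z)=S\big(\tfrac{z_1-(s(t)-s(0))}{\sqrt{1-|\dot s(t)|^2}},z_2\big)$, exactly the $\overline S$ of Proposition~\ref{enbalrect}. Since $\nabla_x\overline S=\nabla\Lambda^T(\nabla_z\widetilde S\circ\Lambda)$, $\dot{\overline S}=\dot{\widetilde S}\circ\Lambda$, and $\nabla\varphi_\epsilon=\nabla\Lambda^T(\nabla_z\widetilde\varphi_\epsilon\circ\Lambda)$ with $\widetilde\varphi_\epsilon(z)=(\mathrm{dist}(z,\widetilde\Gamma)/\epsilon)\wedge 1$, the integral defining $L$ becomes
\[
\int_0^{\overline t}\!\int k^2(t)\,\widetilde\zeta^2(t,z)\,(\nabla_z\widetilde S)^T G(z)\,\nabla_z\widetilde\varphi_\epsilon(z)\,\dot{\widetilde S}(t,z)\,|\det G(z)|^{-1/2}\,\de z\,\de t,\qquad G:=(\nabla\Lambda\,\nabla\Lambda^T)\circ\Lambda^{-1}.
\]
Write $G=I+E$; by~(3) one has $|E(z)|\le\omega(\mathrm{dist}(z,\widetilde\Gamma))$ near $\widetilde\Gamma$ for a modulus of continuity $\omega$. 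The contribution of the $I$-part is exactly the principal term computed in Proposition~\ref{enbalrect}, whose limit is $\int_0^{\overline t}\tfrac\pi4 k^2(t)\dot s(t)\,\de t$; the contribution of $E$ (and of $|\det G|^{-1/2}-1$) is bounded, on $\mathrm{supp}\,\nabla_z\widetilde\varphi_\epsilon\subset\{\mathrm{dist}(\cdot,\widetilde\Gamma)<\epsilon\}$, by $\omega(\epsilon)/\epsilon$ times the singular integrals already estimated in Lemma~\ref{fondlem1}, hence vanishes. Thus $L=\int_0^{\overline t}\tfrac\pi4 k^2(t)\dot s(t)\,\de t$; inserting this into the tested equation and using $\mathcal H^1(\Gamma(t)\setminus\Gamma(0))=s(t)-s(0)=\int_0^t\dot s$ (from~(2)), we get $\mathcal E(\overline t)-\mathcal E(0)+\int_0^{\overline t}\tfrac\pi4 k^2\dot s=\int_0^{\overline t}\langle f,\dot u\rangle$, so \eqref{enebal2} holds for every $\overline t$ if and only if $\int_0^{\overline t}(\tfrac\pi4 k^2-1)\dot s=0$ for all $\overline t$, i.e. $k\equiv 2/\sqrt\pi$ on $\{\dot s>0\}$.

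I expect the main obstacle to be the last estimate. Since $\nabla\varphi_\epsilon$ is of size $1/\epsilon$, multiplying it by the distortion $E=\nabla\Lambda\,\nabla\Lambda^T-I$ and bounding crudely gives only $O(\omega(\epsilon)\log(1/\epsilon))$; one must retain the vanishing factor (comparable to the distance to the crack) carried by the products of the first derivatives of $S$ — that is, re-use the cancellation underlying Lemma~\ref{fondlem1} — so that the relevant singular integrals over the $\epsilon$-tube are genuinely $O(\epsilon)$, which together with the continuity of $\nabla\Lambda$ at the crack in~(3) forces the error to vanish. The compatibility of the two decompositions used, and the fact that the endpoint balls $B_\epsilon$ give no contribution uniformly in $t\in[0,\overline t]$, are then routine.
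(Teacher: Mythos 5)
Your overall strategy matches the paper's: test the weak equation with $\varphi_\epsilon\,\dot u$, identify the singular product $k^2\zeta^2\,\dot{\overline S}\,\nabla\overline S$ via Remark~\ref{compder} and Lemma~\ref{tracelem}, push the computation to the straightened configuration, and recognise the principal term from Proposition~\ref{enbalrect}. However, the step you yourself flag --- controlling the error produced by $E:=G-I$ --- is a genuine gap, and your proposed repair does not close it.

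Concretely, after straightening, the integrand in the $\epsilon$-strip is proportional to
\[
\frac{1}{\epsilon}\Bigl(g_{12}(z)\bigl(\partial_1\widetilde S\bigr)^2 + g_{22}(z)\,\partial_1\widetilde S\,\partial_2\widetilde S\Bigr)\,\zeta^2\,\dot\Phi_1\,|J\Lambda^{-1}|\,,
\]
because in the strip $\nabla\widetilde\varphi_\epsilon=\pm e_2/\epsilon$. The term with $g_{22}\,\partial_1\widetilde S\,\partial_2\widetilde S$ is the good one: $\partial_1 S\,\partial_2 S=-\tfrac{y_2}{8|y|^2}$ has exactly the kernel of Lemma~\ref{fondlem1}. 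But the term with $g_{12}\bigl(\partial_1\widetilde S\bigr)^2$ involves $(\partial_1 S)^2\sim 1/|y|$, which is \emph{not} the kernel of Lemma~\ref{fondlem1}: its $\frac1\epsilon$-averaged integral over the strip is of order $\log(1/\epsilon)$, not $O(1)$. Hypothesis~(3) gives only $|g_{12}(z)|\le\omega(\mathrm{dist}(z,\widetilde\Gamma))$ with an arbitrary modulus of continuity $\omega$, so the bound you obtain is $O(\omega(\epsilon)\log(1/\epsilon))$, which need not vanish (take $\omega(r)=1/\log(1/r)$). Writing ``re-use the cancellation underlying Lemma~\ref{fondlem1}'' does not help here, because there is no cancellation in $(\partial_1 S)^2$; the modulus $\omega$ alone is too weak to kill the logarithm.

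The paper avoids this entirely by using, beyond (1)--(3), the explicit Frenet/tubular structure of the diffeomorphism $\Lambda$ built in Step~2 of \S\ref{ssec-cov}: for that $\Lambda$, in a whole neighbourhood of $\Lambda(\Gamma(1))$ one has $B(\Lambda^{-1}(x))=\bigl(\begin{smallmatrix}b_{11}&0\\0&1\end{smallmatrix}\bigr)$ (off-diagonal entry \emph{identically} zero, $(2,2)$-entry \emph{identically} one), and moreover $\Lambda^{-1}$ carries horizontal lines into level sets of the Euclidean distance to $\Gamma$, so that $\partial_1\overline\varphi_\epsilon\equiv 0$ in the strip. With those facts the $b_{12}(\partial_1 S)^2\partial_2\overline\varphi_\epsilon$ and $b_{11}(\partial_1 S)^2\partial_1\overline\varphi_\epsilon$ terms are absent, not merely small, and the proof closes. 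If you want your argument to work from (1)--(3) alone you would need to strengthen (3) to a Dini-type modulus for $\nabla\Lambda$ at the crack; otherwise you should import the explicit tubular-coordinate structure of $\Lambda$ as the paper implicitly does.
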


\begin{proof}
In view of \eqref{decform}, we have $u(t,x) = u^R(t,x) + k(t)  \zeta(t,\Lambda(x)) \overline{S}(t,\Lambda(x))$, with $u^R(t) \in H^2(\Omega \setminus \Gamma(t))$, $\zeta(t,\Lambda(\cdot))$ a cut--off function supported in a neighborhood of the moving tip of $\Gamma(t)$, and
$$
\overline{S}(t,\Lambda(x)) = S\bigg( \frac{\Lambda_1(x) - (s(t)-s(0))}{\sqrt{1 - | \dot{s}(t)|^2}}, \Lambda_2(x) \bigg)\,,
$$ 
where $S(x_1, x_2) = \frac{x_2}{\sqrt{2} \sqrt{|x| + x_1}}$.

As in the proof of Proposition \ref{enbalrect}, we fix $\overline{t} \in [0,1]$ and, for every $\epsilon > 0$, we define $\varphi_{\epsilon}(x) = \frac{\text{dist}(x, \Gamma(\overline{t}) \setminus \Gamma(0))}{\epsilon} \wedge 1$. Since $\varphi_\epsilon \dot{u}(t) \in H^{1}(\Omega  \setminus \Gamma(t))$, we can use it as test function in \eqref{waveeq2}, and we get
\begin{equation}
\label{wetested10}
\begin{split}
\int_0^{\overline{t}} \langle \ddot{u}(t), \varphi_{\epsilon} \dot{u}(t) \rangle_{H^1(\Omega)} \, \de t &+ \int_0^{\overline{t}} \langle \nabla u(t), \nabla \dot{u}(t) \varphi_{\epsilon} \rangle_{L^2(\Omega;\mathbb R^2)} \, \de t \\
&+ \int_0^{\overline{t}} \langle \nabla u(t), \nabla \varphi_\epsilon \dot{u}(t) \rangle_{L^2(\Omega;\mathbb R^2)} \, \de t 
= \int_0^{\overline{t}} \langle f(t), \dot{u}(t) \varphi_\epsilon \rangle_{L^2(\Omega)} \, \de t\,.
\end{split}
\end{equation}
Integrating by parts, we easily obtain
\begin{eqnarray}
\label{easy1}&&\lim_{\epsilon \to 0^+} \int_0^{\overline{t}} \langle \ddot{u}(t), \varphi_{\epsilon} \dot{u}(t) \rangle_{H^1(\Omega)} \, \de t  = \frac{1}{2} \| \dot{u}(\overline{t}) \|^2_{L^2(\Omega)} - \frac{1}{2} \| \dot{u}(0) \|^2_{L^2(\Omega)}\,,
\\
\label{easy2}&&\lim_{\epsilon \to 0^+} \int_0^{\overline{t}} \langle \nabla u(t), \nabla \dot{u}(t) \varphi_{\epsilon} \rangle_{L^2(\Omega;\mathbb R^2)} \, \de t = \frac{1}{2} \| \nabla u(\overline{t}) \|^2_{L^2(\Omega;\mathbb R^2)} - \frac{1}{2} \| \nabla u(0) \|^2_{L^2(\Omega;\mathbb R^2)}\,,
\\
\label{easy3}&&\lim_{\epsilon \to 0^+} \int_0^{\overline{t}} \langle f(t), \dot{u}(t) \varphi_\epsilon \rangle_{L^2(\Omega)} \, \de t = \int_0^{\overline{t}} \langle f(t), \dot{u}(t) \rangle_{L^2(\Omega)} \, \de t\,.
\end{eqnarray}
The asymptotics as $\epsilon \to 0$ of the third term in the left-hand side of \eqref{wetested10} is more delicate to handle. To simplify the notation, we set
$$
\overline{\zeta}(t,x) := \zeta(t,\Lambda(x))\text{ and } \overline{\varphi}_\epsilon(x) := \varphi_\epsilon(\Lambda^{-1}(x))\,.
$$
Using Lemma \ref{tracelem} and Remark \ref{compder}, as in the proof of the previous proposition in the rectilinear case, we have that the only contribution to the limit as $\epsilon \to 0$ is given by the term
\begin{equation}
\label{samecomp}
\begin{split}
&\int_{\Omega}k^2(t){\overline{\zeta}}^2(t,x) \big[ \nabla \overline{S}(t, \Lambda(x)) \cdot \nabla \varphi_\epsilon(x)\big] \dot{\overline{S}}(t,\Lambda(x))\, \de x \\ 
&= \int_{\Omega}k^2(t) {\alpha(t)}\big[ D \Lambda^T(x) \nabla \overline{S}(t,\Lambda(x)) \cdot \nabla \varphi_\epsilon(x) \big]\overline{\zeta}^2(t,x)  \dot{\Phi}_1(t,\Lambda(x)) {\partial_1}\overline{S}(t, \Lambda(x)) \, \de x \\ 
&=\int_{\Omega}k^2(t) {\alpha(t)} \big[ D \Lambda^T (\Lambda^{-1}(x)) \nabla \overline{S}(t,x) \cdot  \nabla \varphi_\epsilon(\Lambda^{-1}(x))\big] \zeta^2(t,x) \dot{\Phi}_1(t,x) {\partial_1}\overline{S}(t, x)\, |J\Lambda^{-1}(x)|\, \de x\\ 
&= \int_{\Omega} k^2(t) {\alpha(t)}\big[ \nabla \overline{S}(t,x) \cdot B(\Lambda^{-1}(x)) \nabla  \overline{\varphi}_\epsilon(x) \big] \zeta^2(t,x) \dot{\Phi}_1(t,x) {\partial_1}\overline{S}(t, x)\, |J\Lambda^{-1}(x)|\, \de x\,, 
\end{split}
\end{equation}
where $\Phi_1(t,x) := \frac{x_1 - (s(t)-s(0))}{\sqrt{1 - |\dot{s}(t)|^2}}$, $B(x) := D\Lambda(x) D\Lambda^T(x)$, and {$\alpha(t):=\sqrt{1-|\dot s(t)|^2}$}. In the last equality we used the coarea formula applied with the Lipschitz change of variables $\Lambda^{-1}$. 

Thanks to our construction of $\Lambda$, for any $x$ belonging to a suitable small neighborhood of $\{ \Lambda (\Gamma(1)) \}$ we have 
$$
B(\Lambda^{-1}(x)) =  
\begin{pmatrix}
b_{11}(x) & 0 \\
0 & 1
\end{pmatrix}\,,
$$
where $b_{11} \colon \mathbb{R}^2 \to \mathbb{R}$ is a continuous function such that $b_{11}(x_1, 0) = 1$. The last term in \eqref{samecomp} can be split as 
\begin{equation}\notag
\begin{split}
\int_{\Omega}k^2(t) \alpha(t) b_{11}(x){\partial_1}\overline{\varphi}_\epsilon(x)  \zeta^2(t,x) \dot{\Phi}_1(t,x)& [{\partial_1}\overline{S}(t, x)]^2 \, |J\Lambda^{-1}(x)|\, \de x  \\ +
\int_{\Omega}k^2(t) \alpha(t){\partial_2} \overline{S}(t,x) {\partial_2}\overline{\varphi}_\epsilon(x)  \zeta^2(t,x)& \dot{\Phi}_1(t,x) {\partial_1}\overline{S}(t, x)  \, |J\Lambda^{-1}(x)|\, \de x\,.
\end{split}
\end{equation}
By construction of $\Lambda$, each line parallel to $\{ x_2 = 0\}$ is mapped by $\Lambda^{-1}$ into a level set of $\varphi_{\epsilon}$; more precisely $\varphi_{\epsilon}(\Lambda^{-1}(\{ x_2 = s \})) = \frac{s}{\epsilon} \wedge 1$, and this means that on the set of points $\{ \text{dist}(x, \Lambda(\Gamma(1))) \leq \epsilon \}$, we have
\begin{equation}\notag
\nabla \overline{\varphi}_{\epsilon}(x) =
\begin{cases}
\frac{e_2}{\epsilon} &\text{ in } \{ x \in \mathbb{R}^2 \ : \  0 \leq x_1 \leq s(\overline{t})-s(0), \ 0 \leq x_2 < \epsilon \} \\
\frac{x}{\epsilon|x|} &\text{ in } \{ x \in \mathbb{R}^2 \ : \  x \in B_{\epsilon}(0), \ x_1 < 0, \ x_2 \geq 0 \}  \\ 
\frac{x - {r(\overline t)}}{\epsilon|x - {r(\overline t)}|} &\text{ in } \{ x \in \mathbb{R}^2 \ : \ x \in B_{\epsilon}({r(\overline t)}), \ x_1 > s(\overline{t})-s(0), \ x_2 \geq 0 \} \\
0 &\text{ otherwise on } \{ x_2 \geq 0 \}\,, 
\end{cases}
\end{equation}
where, for brevity, we have set ${r(t)} := (s(t)-s(0),0)$ for every $t \in [0,1]$.

Since $\Lambda$ is a bi-Lipschitz map, $|J \Lambda^{-1}|$ is bounded, thus by hypothesis (3) we have
$$
\lim_{x \to (s(t)-s(0),0)} |J \Lambda^{-1}(x)| = 1\,,
$$
for every $t \in [0,1]$.

Moreover, in view of assumption (3), we have that $|J \Lambda^{-1}|$ is continuous on the compact set $\overline{\Gamma(1) \setminus \Gamma(0)}$, hence uniformly continuous; therefore, proceeding exactly as in the proof of Proposition \ref{enbalrect}, 
we can write
\begin{equation}
\label{convtfix}
\lim_{\epsilon \to 0^+} \int_{\Omega}k^2(t) {\alpha( t)}{\partial_2} \overline{S}(t,x) {\partial_2}\overline{\varphi}_\epsilon(x)  \zeta^2(t,x) \dot{\Phi}_1(t,x) {\partial_1}\overline{S}(t, x)  \, |J\Lambda^{-1}(x)|\, \de x \\ = \frac{\pi}{4} k^2(t) \dot{s}(t)\,. 
\end{equation} 
Again by hypothesis (3), we can apply estimate \eqref{fondlem} and deduce that the sequence of integrands in \eqref{convtfix} is dominated in $t$, so that we can apply the Dominated Convergence Theorem to deduce 
\begin{equation}
\label{convtnfix}
\begin{split}
\lim_{\epsilon \to 0^+} \int_0^{\overline{t}}\bigg( \int_{\Omega}k^2(t) {\alpha(t)}{\partial_2} \overline{S}(t,x) {\partial_2}\overline{\varphi}_\epsilon(x)  \zeta^2(t,x) \dot{\Phi}_1(t,x) {\partial_1}\overline{S}(t, x)  \, |J\Lambda^{-1}(x)|\, \de x\bigg) \, \de t \\
=\int_0^{\overline{t}} \frac{\pi}{4}k^2(t)\dot{s}(t) \, \de t\,.
\end{split}
\end{equation} 
By combining \eqref{wetested10} with \eqref{easy1}-\eqref{easy3} and \ref{convtnfix}, we infer that
\begin{equation}\label{lunedi}
\mathcal{E}(\overline{t}) - \mathcal{E}(0) + \frac{\pi}{4}\int_0^{\overline{t}} k^2(t)\dot{s}(t) \, \de t = \int_0^{\overline{t}} \langle f(t), \dot{u}(t) \rangle_{L^2(\Omega)} \, \de t\,.
\end{equation}
Hence, the energy-dissipation balance \eqref{enebal2} is satisfied if and only if 
\[
\int_0^{\overline{t}} \frac{\pi}{4}k^2(t)\dot{s}(t) \, \de t = s(\overline{t}) = \mathcal{H}^1\big(\Lambda(\Gamma(\overline{t}) \setminus \Gamma(0))\big) = \mathcal{H}^1(\Gamma(\overline{t})\setminus \Gamma(0)) \text{ for every } \overline{t} \in [0,1]\,,
\]
which is true if and only if $k(t)$ is equal to $\frac{2}{\sqrt{\pi}}$ whenever $\dot{s}(t)>0$. This concludes the proof.
\end{proof}

\begin{remark}
Our approach is constructive and allows us to show the existence of pairs $(\Gamma(t), u(t))$ satisfying the energy-dissipation balance \eqref{enebal2}. Under the standing assumptions on $\Gamma(t)$, it is enough to take $f$ associated to $2/\sqrt{\pi} \xi(\Phi(t,x))S(\Phi(t,x))$ (which of course is $u(t)$), where $\xi$ is a suitable cut--off function supported in a small neighborhood of the origin. In order to ensure the homogeneous Neumann condition on the fracture, we choose $\xi$ satisfying $\partial_2\xi(y_1,0)=0$ for every $y_1\in\mathbb R$. This can be achieved, e.g., by taking $\xi(y_1,y_2)= \varphi(y_1)\varphi(y_2)$, where $\varphi\in C^\infty_c(\mathbb R)$ has compact support contained in $(-\varepsilon,\varepsilon)$ and satisfies $\varphi\equiv1$ in $(-\varepsilon/2,\varepsilon/2)$, for some $\varepsilon >0$.
\end{remark}

\begin{remark}\label{genenba}
When in equation \eqref{intro-eq1} the matrix $A$ is (possibly) not the identity, an energy balance similar to \eqref{lunedi} is still valid: for every $t \in [0,1]$, there holds
\begin{equation} 
\label{gedb}
\mathcal{E}(t) - \mathcal{E}(0) + \frac{\pi}{4}\int_0^t k^2(\tau) a(\tau) \dot{s}(\tau) \, \de\tau = \int_0^t \langle f(\tau), \dot{u}(\tau) \rangle_{L^2(\Omega)} \, \de\tau\,,
\end{equation}
where $a$ is a function depending only on $A$, $\Gamma$, and $s$, and it is given by
$$
a(t) := |A^{-1/2}(r(t)) \gamma'(s(t))| \cdot|A^{1/2}(r(t)) n(s(t))|\cdot \sqrt{\det A(r(t))}\,.
$$
Here $A^{1/2}$ and $A^{-1/2}$ denote the square root of the symmetric and positive definite matrices $A$ and $A^{-1}$, respectively, and $\gamma'(s(t))$ and $n(s(t))$ are the tangent and normal unit vectors to $\Gamma$ at the point $r(t):=\gamma(s(t))$, respectively. In this case, the energy-dissipation balance \eqref{edb} holds true if and only if the stress intensity factor $k(t)$ satisfies
$$
k(t) = \frac{2}{\sqrt{\pi a(t)}}
$$
during the crack opening, namely when $\dot{s}(t) > 0$. 

In order to derive formula \eqref{gedb}, we use the decomposition result \eqref{decomp1} rewritten as
$$
u(t,x) = u^R(t,x) + k(t)\zeta(t,x) \overline{S}(t, \chi(x))\,,
$$
where $\overline{S}(t,x)$ is the singular part of the solution relative to the transformed curve $\Gamma^{(1)}=\chi(\Gamma)$. Then we proceed as in the previous theorem and proposition: we test the PDE with $\dot{u}(t) \varphi_\epsilon$ (where $\varphi_\epsilon(x) = \frac{\text{dist}(x,\Gamma(\overline t) \setminus \Gamma(0))}{\epsilon} \wedge 1$), and as before, we note that the only delicate term is the one that converges to the integral in the left hand-side of \eqref{gedb}:
$$
\lim_{\epsilon \to 0^+} \int_0^{\overline t} k^2(t)\bigg(\int_{\Omega}\zeta^2(t,x) [A(x) \nabla \overline{S}(t, \chi(x)) \cdot \nabla \varphi_{\epsilon}(x)] \, \dot{\overline{S}}(t,\chi(x)) \, \de x\bigg) \,\de t\,.
$$
By applying the change of variables $\chi^{-1}$, we can rewrite the space integral in the previous expression as follows:
$$
\int_{\Omega} \zeta^2(t,x) [D\chi A D\chi^T](\chi^{-1}(x))\nabla \overline{S}(t,x) \cdot D\chi^{-T}(x) \nabla \varphi_{\epsilon}(\chi^{-1}(x)) \, \dot{\overline{S}}(t,x) |J \chi^{-1}(x)| \, \de x\,.
$$
Finally, we work on the transformed curve $\Gamma^{(1)}$, exactly as in the previous theorem, using the property of the singular part $\overline{S}(t,x)$ together with the following facts: by construction, $[D\chi A  D\chi^T ](\chi^{-1}(x))$ is a continuous function which agrees with the identity on the points of $\Gamma^{(1)}$;  $D\chi^{-T}(x) \nabla \varphi_{\epsilon}(\chi^{-1}(x))$ is a continuous function equal to $\frac{1}{\epsilon}|A^{1/2}(r(t)) n(s(t))|\, n^{(1)}(s^{(1)}(t))$ on the points of $\Gamma^{(1)}$, where $n^{(1)}(s^{(1)}(t))$ denotes the normal unit vector to $\Gamma^{(1)}$ at the point $\gamma^{(1)}(s^{(1)}(t))$; the velocity $\dot{s}^{(1)}$ of the curve $\Gamma^{(1)}$ satisfies $\dot{s}^{(1)}(t) = |A^{-1/2}(r(t))\gamma'(s(t))| \dot{s}(t)$; finally, $|J \chi^{-1}(x)|$ is a continuous function equal to $\sqrt{\det A(r(t))}$ on the points of $\Gamma^{(1)}$. 
\end{remark}

\begin{remark} We underline that Proposition \ref{enbalrect}, Theorem \ref{thmeb}, and Remark \ref{genenba} give an important quantitative information on $k$ and $s$: for every $t\in [0,T]$
$$
\left[\frac2{\sqrt{\pi a(t)}} - k(t)\right]\dot{s}(t)=0\,.
$$
In particular, in the set $\{ t \ :\ \dot{s}(t)>0 \}\subset [0,T]$ the stress intensity factor $k$ coincides with the function $2/\sqrt{\pi a}$. On the other hand, nothing can be said for the times for which $\dot{s}=0$.
 \end{remark}

\subsection*{Acknowledgments.} The first two authors are members of the {\em Gruppo Nazionale per l'Analisi Ma\-te\-ma\-ti\-ca, la Probabilit\`a e le loro Applicazioni} (GNAMPA) of the {\em Istituto Nazionale di Alta Matematica} (INdAM).
The work of M. Caponi has been partially supported by the INdAM -- GNAMPA project 2018 {\em Problemi non lineari alle derivate parziali} (Prot\_U-UFMBAZ-2018-000384).

\end{document}